\numberwithin{equation}{section}
\newtheorem{lemma}{Lemma}[section]
\newtheorem{prop}[lemma]{Proposition}
\newtheorem{thm}[lemma]{Theorem}
\newtheorem{cor}[lemma]{Corollary}
\theoremstyle{definition}
\newtheorem{definition}[lemma]{Definition}
\newtheorem{conjecture}[lemma]{Conjecture}
\newtheorem{question}[lemma]{Question}
\theoremstyle{remark}
\newtheorem{remark}[lemma]{Remark}
\renewcommand{\epsilon}{\varepsilon}
\newcommand{\Good}{\mathit{Good}}
\newcommand{\Bad}{\mathit{Bad}}
\newcommand{\supp}{\mathrm{supp}}
\newcommand{\Var}{\mathrm{Var}}
\def\R{\mathbb{R}}
\def\C{\mathbb{C}}
\def\N{\mathbb{N}}
\def\L{\mathcal{L}}
\def\O{\mathcal{O}}
\def\a{\mathbf{a}}
\def\b{\mathbf{b}}
\def\j{\mathbf{j}}
\def\p{\mathbf{p}}
\def\A{\mathcal{A}}
\def\I{\mathcal{I}}
\def\O{\mathcal{O}}
\def\x{\mathbf{x}}
\def\p{\mathbf{p}}
\numberwithin{equation}{section} 
\numberwithin{table}{section}
\title[Fourier decay for fractal measures]{Polynomial Fourier decay for fractal measures and their pushforwards}
\author{Simon Baker and Amlan Banaji}
\date{}
\begin{document}
	
	\begin{abstract}
	We prove that the pushforwards of a very general class of fractal measures $\mu$ on $\mathbb{R}^d$ under a large family of non-linear maps $F \colon \mathbb{R}^d \to \mathbb{R}$ exhibit polynomial Fourier decay: there exist $C,\eta>0$ such that $|\widehat{F\mu}(\xi)|\leq C|\xi|^{-\eta}$ for all $\xi\neq 0$. 
	Using this, we prove that if $\Phi = \{ \varphi_a \colon [0,1] \to [0,1]\}_{a \in \mathcal{A}}$ is an iterated function system consisting of analytic contractions, and there exists $a \in \mathcal{A}$ such that $\varphi_a$ is not an affine map, then every non-atomic self-conformal measure for $\Phi$ has polynomial Fourier decay; this result was obtained simultaneously by Algom, Rodriguez Hertz, and Wang. 
	We prove applications related to the Fourier uniqueness problem, Fractal Uncertainty Principles, Fourier restriction estimates, and quantitative equidistribution properties of numbers in fractal sets.
	\end{abstract}
	
	\maketitle
	
	\noindent \emph{Mathematics Subject Classification 2020}: 42A38 (Primary), 28A80, 60F10 (Secondary)
	
	\noindent \emph{Key words and phrases}: Fourier transform, self-conformal measure, non-linear images 
		
	\section{Introduction}
	
	\subsection{Background and results}
	
	The \emph{Fourier transform} of a Borel probability measure $\mu$ supported on $\mathbb{R}$ is the function $\widehat{\mu} \colon \R \to \C$ given by 
	\begin{equation} 
		\label{e:1dFT}
		\widehat{\mu}(\xi) = \int_{\R} e^{-2\pi i \xi x} d\mu(x). 
	\end{equation}
	It is an important quantity giving `arithmetic' information about the measure. However, it is often difficult to calculate.
	A measure $\mu$ is said to be a \emph{Rajchman} measure if $\widehat{\mu}(\xi) \to 0$ as $|\xi| \to \infty$. Determining whether a measure is Rajchman, and if it is Rajchman, the speed at which the Fourier transform converges to zero, is an interesting and important problem. 
	
	In this paper we consider this problem in the context of fractal measures. 
	Historically, the study of the Fourier transform of fractal measures was initiated by problems coming from uniqueness of trigonometric series, metric number theory,
	Fourier multipliers, and maximal operators defined by fractal measures. We refer the reader to \cite{JialunSahlsten1,JialunSahlsten2,SahlstenSurvey} for a thorough historical overview. Well-studied families of fractal measures include self-similar measures and self-conformal measures; these arise from iterated function systems, which are defined as follows. We call a map $\varphi\colon [0,1]^{d}\to[0,1]^{d}$ a contraction if there exists $r\in(0,1)$ such that $\|\varphi(x)-\varphi(y)\|\leq r\|x-y\|$ for all $x,y\in [0,1]^d$. 
	We call a finite set of contractions an iterated function system, or IFS for short. 
	A well-known result due to Hutchinson~\cite{Hut} states that for any IFS $\Phi=\{\varphi_{a}\}_{a\in \A}$, there exists a unique non-empty compact set $X$ satisfying 
	\[ X=\bigcup_{a\in \A}\varphi_{a}(X).\] 
	We call $X$ the attractor of $\Phi$. We will always assume that our IFS is non-trivial, by which we mean that the contractions do not all share a common fixed point, and thus $X$ is always uncountable. 
	The attractors of iterated function systems very often exhibit fractal behaviour. 
	To understand the metric properties of an attractor $X$, one typically studies measures supported on $X$. The most well studied fractal measures are the stationary measures arising from probability vectors (see \cite{Hoc,Shmerkinsurvey,Varjusurvey2}).
	Given an IFS $\Phi=\{\varphi_{a}\}_{a\in \A}$ and a probability vector $\p=(p_a)_{a\in \A}$ ($0<p_a<1$ for all $a$ and $\sum_{a\in \A}p_a=1$), another well-known result, also due to Hutchinson \cite{Hut}, states that there exists a unique Borel probability measure satisfying 
	\begin{equation}\label{e:definebernoullimeas} 
	\mu=\sum_{a\in \A}p_a\cdot \varphi_a \mu. 
	\end{equation}
	 We call $\mu$ the stationary measure corresponding to $\Phi$ and $\p$, and emphasise that we always assume that all probability weights are strictly positive, so $\supp (\mu) = X$. 
	 Recall that a map $\varphi\colon [0,1]^{d}\to[0,1]^{d}$ is called a similarity if there exists $r \in (0,1)$ such that $\|\varphi(x)-\varphi(y)\|= r\|x-y\|$ for all $x,y\in [0,1]^d$. 
	 If an IFS $\Phi$ consists of similarities, and we want to emphasise this property, then we say that the IFS is a self-similar iterated function system, and the corresponding stationary measures are known as self-similar measures. 
	 Similarly, if $\Phi$ consists of $C^{1+\alpha}$ angle-preserving contractions with non-vanishing derivative, then the IFS is called a self-conformal IFS, and the corresponding stationary measures are known as self-conformal measures\footnote{In this paper, whenever we refer to self-conformal measures they will be supported in the line, so for our purposes the angle preserving property is superfluous.}. Self-conformal IFSs and self-conformal measures arise naturally in several areas of mathematics. 
	 They appear in number theory by considering the inverse branches of the Gauss map \cite{JordanSahlsten}. Similarly, the Furstenberg measures that play an important role in random matrix theory can under suitable hypothesis be realised as self-conformal measures~\cite{ABY,Yoccoz}. 
	
	Understanding the Fourier decay properties of stationary measures for IFSs is an active area of research; see~\cite{SahlstenSurvey} for a recent survey, which also discusses several applications of this problem. In this paper we study the Fourier decay properties of non-linear pushforwards of self-similar measures. This problem has been studied previously by Kaufman~\cite{Kau}, Chang and Gao~\cite{Chang2017Fourier}, and Mosquera and Shmerkin~\cite{MS}. They each proved polynomial Fourier decay results for pushforwards of self-similar measures coming from homogeneous self-similar IFSs in $\mathbb{R}$ (i.e. where all contraction ratios are equal) by $C^2$ maps with nonvanishing second derivative. These results were recently generalised to $\mathbb{R}^{2}$ by Mosquera and Olivo~\cite{MO}. Our main result in this direction is the following theorem, which makes no assumption of homogeneity and allows the second derivative of the pushforward map to vanish in places. 
    A measure $\nu$ is said to have polynomial Fourier decay (or power Fourier decay) if there exist $C,\eta > 0$ such that $|\widehat{\nu}(\xi)| \leq C|\xi|^{-\eta}$ for all $\xi \in \R \setminus \{0\}$. 
  \begin{thm}
        \label{thm:Analytic pushforward thm}
        Let $\mu$ be a self-similar measure with support in $[0,1]$ and let $F\colon [0,1]\to \mathbb{R}$ be analytic and non-affine. Then the pushforward measure $F\mu$ has polynomial Fourier decay. 
    \end{thm}
We will also prove statements for more general stationary measures on $\R^d$ and for pushforwards that are only $C^{2}$ instead of analytic (see Section~\ref{Section:Non-linear pushforwards of fractal measures}). Theorem~\ref{thm:Analytic pushforward thm} has an immediate consequence for the Fourier decay properties of certain self-conformal measures. Before formulating this statement we will recall some recent results on these measures.

It is generally believed that if a $C^{1+\alpha}$ IFS is sufficiently non-linear then each of its self-conformal measures will exhibit polynomial Fourier decay. This line of research was taken up by Jordan and Sahlsten \cite{JordanSahlsten} who, building upon work of Kaufman~\cite{Kau2}, and later Queff\'elec and Ramar\'{e}~\cite{QR}, proved that if $\mu$ is a Gibbs measure for the Gauss map of sufficiently large dimension then the Fourier transform of $\mu$ has polynomial Fourier decay. This result was then generalised by Bourgain and Dyatlov in~\cite{BD1}, who used methods from additive combinatorics to establish polynomial Fourier decay for Patterson--Sullivan measures for convex cocompact Fuchsian groups. The additive combinatorics methods of Bourgain and Dyatlov were later applied by Sahlsten and Stevens in~\cite{SS}. In this paper they proved that Gibbs measures for well separated self-conformal IFSs acting on the line satisfying a suitable non-linearity assumption have polynomial Fourier decay. Algom, Rodriguez Hertz, and Wang~\cite{AHW,AHW2} established weaker decay rates for the Fourier transform of self-conformal measures under weaker assumptions than those appearing in~\cite{SS}. Recently both the first named author and Sahlsten \cite{BS}, and Algom, Rodriguez Hertz, and Wang~\cite{AHW3}, gave sufficient conditions for an IFS which guarantee that every self-conformal measure exhibits polynomial Fourier decay. Both of these papers used a disintegration method inspired by work of Algom, the first named author, and Shmerkin~\cite{ABS}, albeit in different ways. Importantly, the results in \cite{AHW,AHW2,AHW3,BS} do not require the IFS to satisfy any separation assumptions. 
	
Building upon existing work, in this paper we use Theorem \ref{thm:Analytic pushforward thm} to prove that a self-conformal measure coming from an IFS consisting of analytic maps\footnote{Recall that a function $f\colon [0,1] \to \R$ is real analytic if for all $x_0 \in [0,1]$ there exists $\epsilon > 0$ and a power series about $x_0$ which converges to $f(x)$ for all $x \in (x_0 - \epsilon, x_0 + \epsilon) \cap [0,1]$.} will have polynomial Fourier decay under the weakest possible non-linearity assumption. In particular, the following holds. 	
	\begin{thm}
		\label{thm:analyticthm}
		Let $\{\varphi_a\colon [0,1] \to [0,1]\}_{a\in \A}$ be an IFS such that each $\varphi_{a}$ is analytic, and suppose that there exists $a \in \mathcal{A}$ such that $\varphi_{a}$ is not an affine map. Then for every self-conformal measure $\mu$ there exist $C,\eta>0$ such that $|\widehat{\mu}(\xi)|\leq C |\xi|^{-\eta}$ for all $\xi\neq 0$.
	\end{thm}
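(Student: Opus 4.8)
The plan is to reduce Theorem~\ref{thm:analyticthm} to the pushforward result advertised in the abstract, namely that $F\mu$ has polynomial Fourier decay for a suitable fractal measure $\mu$ and a suitable non-linear map $F$. The starting observation is that a self-conformal measure for $\Phi = \{\varphi_a\}_{a\in\A}$ is, by~\eqref{e:definebernoullimeas}, the stationary measure for some probability vector $\p$; unwinding the fixed-point equation, $\mu$ is the pushforward under the natural coding map $\pi\colon \A^\N \to [0,1]$ of the Bernoulli measure $\beta = \p^\N$. The coding map records compositions $\varphi_{a_1}\circ\varphi_{a_2}\circ\cdots$, and the key structural input is that, since at least one $\varphi_a$ is analytic and non-affine, after passing to a suitable finite iterate of the IFS one can isolate a `non-linear direction': there is a cylinder on which the relevant composition behaves like a genuinely curved analytic map rather than an affine one. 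Concretely, I would split the coding $\A^\N = \A^k \times \A^\N$ for a well-chosen finite level $k$ and write $\mu = \sum_{\w \in \A^k} p_\w \, \varphi_\w \mu$, so that $\widehat{\mu}(\xi) = \sum_{\w} p_\w \, e(\cdots)\,\widehat{\varphi_\w \mu}(\varphi_\w'(\cdot)\,\xi)$ up to controlled error, reducing matters to decay of $\widehat{\varphi_\w\mu}$ where $\varphi_\w$ is a long composition that is non-affine because one of its factors is.

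The second and main step is to recognise $\varphi_\w\mu$ (equivalently $\mu$ itself after a harmless conjugation) as a pushforward $F\nu$ where $\nu$ is a stationary measure for an auxiliary IFS falling under the "very general class of fractal measures" hypothesis of the pushforward theorem, and $F$ is a non-linear analytic map whose non-degeneracy is inherited from the non-affineness of $\varphi_a$. The analyticity hypothesis is what makes the non-linearity assumption sharp: if $\varphi_a$ is analytic and not affine, then $\varphi_a'' $ is not identically zero, hence vanishes only on a discrete (finite, on $[0,1]$) set, so $\varphi_a$ is "uniformly curved" away from finitely many points, and one can always find a subsystem avoiding these points. This rules out the pathology of a map that is affine on part of the attractor but not globally. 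I would therefore choose $F$ to encode this curved branch and check that the pair $(F,\nu)$ satisfies the precise hypotheses of the pushforward theorem (some Hölder/analytic regularity of $F$, a non-vanishing derivative condition, and the fractal-measure conditions on $\nu$, which for a self-conformal measure amount to it being non-atomic together with the bounded-distortion and exponential-separation-free estimates available for $C^{1+\alpha}$ conformal IFSs).

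The step I expect to be the genuine obstacle is verifying the hypotheses of the pushforward theorem for $\nu$ in full generality --- in particular, handling self-conformal IFSs with \emph{no separation condition} and with possibly wildly varying contraction ratios, so that $\nu$ need not have any a priori Frostman exponent or dimension lower bound. This is exactly the regime in which earlier arguments (Kaufman, Bourgain--Dyatlov, Sahlsten--Stevens) do not directly apply, and where the disintegration method of \cite{ABS,BS,AHW3} enters: one disintegrates $\nu$ along the fibres of the non-linear map $F$ and exploits that, conditionally, the "transversal" component of $\nu$ still carries enough non-trivial mass at all scales because the IFS is non-trivial. Making this quantitative --- obtaining a power-law rather than merely logarithmic gain --- is where the analyticity is used a second time, via a \L ojasiewicz-type inequality controlling how fast $F'$ (and the relevant phase functions) can degenerate, ensuring the oscillatory-integral/large-deviation estimates close with a polynomial saving. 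Once $|\widehat{F\nu}(\xi)| \le C|\xi|^{-\eta}$ is established, summing the finitely many cylinder contributions from the first step and controlling the $\w$-dependence of the constants (uniform because there are finitely many of them, or because bounded distortion makes the estimates scale-invariant) yields $|\widehat{\mu}(\xi)| \le C'|\xi|^{-\eta'}$, completing the proof.
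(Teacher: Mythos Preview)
Your proposal has a genuine structural gap: you are missing the case dichotomy that organises the paper's argument. The pushforward theorem (Theorem~\ref{t:main}/\ref{t:self-similar}) applies only when the base measure $\nu$ is stationary for a CIFS whose fibre maps are \emph{similarities}. A general self-conformal measure $\mu$ for an analytic IFS $\Phi$ can be written as $F\nu$ with $\nu$ of this form precisely when $\Phi$ is analytically conjugate to a self-similar IFS $\Psi$ via $F$; otherwise no such representation exists. Your step ``recognise $\varphi_\w\mu$ as a pushforward $F\nu$ where $\nu$ is stationary for an auxiliary IFS falling under the hypotheses of the pushforward theorem'' is exactly where this breaks: if $\Phi$ is not conjugate to self-similar, there is no candidate $\nu$, and the obstacle you flag (separation, Frostman exponent) is not the real one. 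Relatedly, your displayed identity $\widehat{\mu}(\xi) = \sum_\w p_\w\, e(\cdots)\,\widehat{\varphi_\w\mu}(\varphi_\w'(\cdot)\,\xi)$ is not well-defined, since $\varphi_\w$ is nonlinear and pushforward by a nonlinear map does not interact with the Fourier transform by a simple dilation.

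The paper accordingly splits into two cases. If $\Phi$ is analytically conjugate to a self-similar IFS $\Psi$ via a diffeomorphism $F$, then one checks $\mu = F\nu$ with $\nu$ self-similar for $\Psi$, and $F$ is forced to be non-affine (else every $\varphi_a$ would be affine). The zeros of $F'$ and $F''$ are handled not by passing to a subsystem but by a cylinder decomposition of $\nu$ at scale $|\xi|^{-\delta}$: cylinders meeting a neighbourhood of $\{F'=0\}\cup\{F''=0\}$ are controlled by the Frostman bound for $\nu$ (Lemma~\ref{l:FengFrostman}) together with a \L{}ojasiewicz-type estimate (Lemma~\ref{l:Analytic level sets}), and on the remaining cylinders one applies Theorem~\ref{t:self-similar} to $F\circ\psi_\a$ with quantitative control on the derivative norms (Proposition~\ref{p:analyticpushforward}). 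If $\Phi$ is \emph{not} conjugate to a self-similar IFS, the pushforward route is unavailable, and the conclusion is imported from the transfer-operator/spectral-gap results of \cite{AHW3} and \cite{BS}; your proposal does not address this case at all.
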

	
	Theorem~\ref{thm:analyticthm} was obtained simultaneously and independently by Algom et al. in \cite{AlgomPreprintnonlinear}. This theorem was announced in \cite{AHW3} and in \cite[Section~6]{AHW3} a short argument, conditional on what was at the time a forthcoming result\footnote{Preprints of the present paper and \cite{AlgomPreprintnonlinear} were made available to experts at the time \cite{AHW3} was made publicly available.} of Algom et al. \cite{AlgomPreprintnonlinear} (which also follows from this paper), was provided.

	The proof of Theorem~\ref{thm:analyticthm} divides into two cases depending on whether or not the IFS admits an analytic conjugacy to a self-similar IFS; both cases are highly non-trivial. The case where no such conjugacy exists follows from the main results in \cite{AHW3} and \cite{BS}, which are proved by establishing spectral gap-type estimates for appropriate transfer operators. 
For the case when the IFS admits such a conjugacy, each self-conformal measure can be realised as a pushforward of a self-similar measures under the conjugacy map. Using this observation, the conjugacy case of Theorem~\ref{thm:analyticthm} follows from a suitable application of Theorem \ref{thm:Analytic pushforward thm} (see the argument given in Section~\ref{s:analyticthm}). Similarly, a suitable theorem on the behaviour of non-linear pushforwards of self-conformal measures was obtained by Algom et al. in \cite{AlgomPreprintnonlinear}. It was this result that was relied upon in the conditional argument given in \cite{AHW3}. 

The present paper and \cite{AlgomPreprintnonlinear} differ in two key ways. Whereas \cite{AHW3}, \cite{BS} and this paper made use of a disintegration technique from \cite{ABS} to establish Fourier decay, Algom et al. \cite{AlgomPreprintnonlinear} use a large deviations estimate of Tsujii~\cite{Tsujii}. 
Our non-linear projection theorem applies more generally and covers some higher-dimensional and infinite iterated function systems. On the other hand, the non-linear projection theorem of Algom et al. is a more direct analogue of the classical van der Corput inequality from harmonic analysis. They also use this theorem to prove equidistribution results for the sequence $(x^n \mod 1)_{n=1}^{\infty}$, where $x$ is distributed according to a self-similar measure.
	
	\subsection{Non-linear pushforwards of fractal measures}
    \label{Section:Non-linear pushforwards of fractal measures}
	Our results on non-linear pushforwards of fractal measures hold in the general setting of countable IFSs, which were studied systematically in~\cite{MU1}. 
	As such it is necessary to introduce some additional terminology. 
	We call a countable family $\Phi=\{\varphi_{a} \colon [0,1]^d \to [0,1]^d \}_{a\in \A}$ of contractions satisfying 
    \begin{equation}\label{e:uniformcontraction}
    \sup_{a\in \A}\sup_{x,y\in[0,1]^{d}}\frac{\|\varphi_{a}(x)-\varphi_{a}(y)\|}{\|x-y\|}<1 
    \end{equation}
	a countable iterated function system, or CIFS for short. Whenever we use the phrase IFS we will mean a non-trivial finite set of contractions; when we say CIFS we will mean a countable (possibly finite, possibly trivial) collection of contractions satisfying~\eqref{e:uniformcontraction}. For a countable iterated function system there no longer necessarily exists a unique non-empty compact set $X$ satisfying $X=\cup_{a\in \A}\varphi_{a}(X)$.\footnote{The well-known fixed point proof no longer works because the map $X \mapsto \cup_{a\in \A}\varphi_{a}(X)$ does not necessarily map compact sets to compact sets.}
	As such, given a CIFS we define 
	\[ X\coloneqq \bigcup_{(a_i)_{i=1}^{\infty} \in \mathcal{A}^{\N}} \bigcap_{n=1}^{\infty} (\varphi_{a_1}\circ\cdots \circ \varphi_{a_n}) ([0,1]^d),\]
	and call $X$ the \emph{attractor} of $\Phi$. When our countable IFS contains finitely many contractions, i.e. it is an IFS, then the attractor as defined above coincides with the unique non-empty compact set satisfying $X=\cup_{a\in \A}\varphi_{a}(X)$, so there is no ambiguity in our use of the term attractor. Given a CIFS $\Phi=\{\varphi_{a}\}_{a\in \A}$ and a probability vector $\p=(p_a)_{a\in \A}$ there exists a unique Borel probability measure satisfying\footnote{The fixed point proof works in this context because $\mu \mapsto \sum_{a\in \A}p_a\varphi_a \mu$ is a map from the space of Borel probability measures supported on $[0,1]^d$ to itself, see \cite[Theorem~2]{Secelean2002countable}.} %
	\[ \mu=\sum_{a\in \A}p_a\cdot \varphi_a \mu.\] 
	We call $\mu$ the stationary measure for $\Phi$ and $\p$. 
	
	In this paper we will focus on the stationary measures coming from the following special class of CIFSs. 
	
	\begin{definition}
    \label{Def:Fibre product}
		Let $\Psi \coloneqq \{\psi_{j} \colon [0,1]^{d}\to [0,1]^d\}_{j\in J}$ be a CIFS. Suppose that for each $j\in J$ there exists a CIFS $\Psi_{j} \coloneqq \{\gamma_{l,j} \colon [0,1]\to [0,1]\}_{l\in L_j}$ consisting of similarities, i.e. each $\Psi_{j}$ has the form $\Psi_{j}=\{\gamma_{l,j}(x)=r_{l,j}\cdot x + t_{l,j}\}_{l\in L_j}$ where $|r_{l,j}| \in (0,1)$ and $t_{l,j} \in \R$. Also, suppose that there exists $\Psi_{j^*}$ such that the corresponding attractor is not a singleton. Then we define the fibre product CIFS (consisting of maps from $[0,1]^{d+1}$ to itself) to be 
		\[\Phi=\left\{\varphi_{j,l}(x_1,\ldots,x_{d+1})=(\psi_{j}(x_1,\ldots,x_d),\gamma_{l,j}(x_{d+1}))\right\}_{j\in J,l\in L_j}.\] 
		We refer to $\Psi$ as the base CIFS, and to each $\Psi_{j}$ as a fibre CIFS. We will always assume that the fibre product CIFS is itself a CIFS, so satisfies the condition 
		\[ \sup_{j,l}\sup_{x,y\in[0,1]^{d+1}}\frac{\|\varphi_{j,l}(x)-\varphi_{j,l}(y)\|}{\|x-y\|}<1.\]
	\end{definition}
Examples of CIFSs that can be realised as fibre product CIFSs arise in the study of self-similar or self-affine carpets and sponges such as those of Bedford--McMullen~\cite{Bed,MC}, Gatzouras--Lalley~\cite{GL}, or Bara\'nski~\cite{Bar} type. Some more detailed examples are given in Figure~\ref{f:ifs}. 

	Given a $C^{2}$ function $F\colon [0,1]^{d+1}\to \mathbb{R}$, we associate the quantities 
	\[\|F\|_{\infty,1}\coloneqq \max_{x\in [0,1]^{d+1}}\left|\frac{\partial F}{\partial x_{d+1}}(x)\right|,\quad\|F\|_{\infty,2}\coloneqq \max_{x\in [0,1]^{d+1}}\left|\frac{\partial^{2}F}{\partial x_{d+1}^2}(x)\right|\] and
	\[\|F\|_{\min,2}\coloneqq \min_{x\in [0,1]^{d+1}}\left|\frac{\partial^{2}F}{\partial x_{d+1}^2}(x)\right|.\]
	Our main result for non-linear pushforwards of fractal measures is Theorem~\ref{t:main}. It is proved in Section~\ref{s:mainthm}, which constitutes much of the work in this paper. 
	\begin{thm}
		\label{t:main}
		Let $\Phi$ be a fibre product CIFS for some $\Psi=\{\psi_j\}_{j\in J}$ and $\{\Psi_{j}\}_{j\in J}$. 
		Let $\p=(p_{j,l})_{j\in J,l\in L_j}$ be a probability vector and assume that there exists $\tau>0$ such that 
		\begin{equation}\label{e:largedevassump} 
			\sum_{j \in J} \sum_{l \in L_j} p_{j,l} |r_{j,l}|^{-\tau} < \infty.
		\end{equation}
		Let $\mu$ be the stationary measure for $\Phi$ and $\p$. Then there exist $\eta,\kappa,C>0$ such that the following holds. 
		For all $C^{2}$ functions $F\colon [0,1]^{d+1}\to\mathbb{R}$ which satisfy $\frac{\partial^{2}F}{\partial x_{d+1}^2}(x) \neq 0$ for all $x\in[0,1]^{d+1}$, we have 
		\[ |\widehat{F\mu}(\xi)|\leq C (1+\|F\|_{\infty,1}+\|F\|_{\infty,1}^{-\kappa}+\|F\|_{\infty,2}) (1+\|F\|_{\min,2}^{-\kappa})|\xi|^{-\eta} \quad \mbox{ for all } \xi \neq 0. \] 
		
		In particular, if we only assume that $F\colon [0,1]^{d+1}\to\mathbb{R}$ is a $C^2$ function which satisfies $\frac{\partial^{2}F}{\partial x_{d+1}^2}(x) \neq 0$ for all $x\in \supp(\mu)$, then there exists $C_{F,\mu}>0$ depending upon $F$ and $\mu$ such that 
		\[|\widehat{F\mu}(\xi)|\leq C_{F,\mu}|\xi|^{-\eta}\quad \mbox{ for all } \xi \neq 0.\] 
	\end{thm}
	
	Theorem~\ref{t:main} immediately implies the following statement for stationary measures for a CIFS consisting of similarities acting on $[0,1]$. 
	
	\begin{cor}\label{t:self-similar}
		Let $\Phi$ be a non-trivial CIFS acting on $[0,1]$ consisting of similarities, and let $\p$ be a probability vector. Assume that 
		\[ \sum_{a\in \A}p_a|r_a|^{-\tau}<\infty\]
		 for some $\tau>0$. Let $\mu$ be the stationary measure for $\Phi$ and $\p$. 
		Then there exist $\eta,\kappa,C>0$ such that the following holds. 
		For all $C^2$ functions $F\colon [0,1]\to \mathbb{R}$ which satisfy $F''(x) \neq 0$ for all $x \in [0,1]$, we have  
		\begin{align*} |\widehat{F\mu}(\xi)|\leq C&\Big(1+\max_{x\in [0,1]}|F'(x)|+\Big(\max_{x\in [0,1]}|F'(x)|\Big)^{-\kappa}+\max_{x\in [0,1]}|F''(x)|\Big)\\*
			& \times \Big(1+\Big(\min_{x\in [0,1]}|F''(x)|\Big)^{-\kappa}\Big)|\xi|^{-\eta}
			\end{align*}
		for all $\xi\neq 0$.
		
		In particular, if we only assume that $F\colon [0,1]\to\mathbb{R}$ is a $C^{2}$ function which satisfies $F''(x) \neq 0$ for all $x\in \supp(\mu)$, then there exists $C_{F,\mu}>0$ such that 
		 \[|\widehat{F\mu}(\xi)|\leq C_{F,\mu}|\xi|^{-\eta}\quad \mbox{ for all } \xi \neq 0. \]
	\end{cor} 
	\begin{proof}
		Let $\Phi=\{\varphi_a\}_{a\in \A}$ and $\p$ respectively be a CIFS and probability vector satisfying our assumptions, and let $\mu$ be the corresponding stationary measure. Let $\Psi=\{\psi_{0}\}$ be the single-element self-similar IFS acting on $\R$ consisting of the map $\psi_{0}(x)=x/2$. Let $\Psi_{0}=\{\varphi_a\}_{a\in \A}$ and $\tilde{\Phi}$ be the corresponding fibre product IFS. Let $\nu$ be the stationary measure (on $\R^2$) corresponding to $\tilde{\Phi}$ and $\p$. It is straightforward to verify that $\nu = \delta_{0}\times \mu$, where $\delta_0$ is the Dirac mass at $0$. 
		
		Now let $F \colon [0,1]\to \R$ be a $C^2$ function satisfying $F''(x)\neq 0$ for all $x\in[0,1]$ (or all $x \in \supp(\mu)$). We define $\tilde{F} \colon [0,1]^{2}\to \R$ by $\tilde{F}(x,y)=F(y)$. Then $\tilde{F}\nu=F\mu$. Now applying Theorem~\ref{t:main} for $\tilde{F}$ and $\nu$ implies the desired statements for $F$ and $\mu$.
	\end{proof}

	We now make several comments about the statement of Theorem~\ref{t:main} and Corollary~\ref{t:self-similar}. 
The first part of Corollary~\ref{t:self-similar} will be used in an important way in our proof of Theorem \ref{thm:Analytic pushforward thm}. The bound involving the $\max_{x\in [0,1]}|F'(x)|,\max_{x\in [0,1]}|F''(x)|$ and $\min_{x\in [0,1]}|F'(x)|$ terms will be sufficiently flexible to allow us to control pushforwards where the underlying $F$ may satisfy $F''(x)=0$ for some $x\in \supp(\mu)$. 
The second part of Corollary~\ref{t:self-similar} in the special case of self-similar measures has been obtained independently and simultaneously by Algom et al. in~\cite{AlgomPreprintnonlinear}, using a somewhat different method. 
	As well as holding for IFSs, Theorem~\ref{t:main} and Corollary~\ref{t:self-similar} hold for CIFSs, where the condition~\eqref{e:largedevassump} is important as it allows us to apply Cram\'er's theorem for large deviations. This condition also implies that the \emph{Lyapunov exponent} $\Lambda$ is finite, where we define
	\begin{equation}\label{e:lyapunov} 
    \Lambda \coloneqq \sum_{j \in J} \sum_{l \in L_j} p_{l,j} \log |r_{l,j}^{-1}|. \end{equation}
	We emphasise that we do not assume that the CIFS satisfies any homogeneity or separation assumptions (a well-studied example of a CIFS of similarities which does satisfy nice properties is the L\"uroth maps). We also emphasise that the base CIFS $\Psi$ can be an arbitrary CIFS (the contractions do not need to be ``nice'' maps). The constants $\eta,\kappa,C>0$ do not depend upon the choice of $F$ and only depend upon the underlying CIFS and probability vector. 
	Given a particular CIFS, one could carefully follow the proof to make the constant $\eta$ explicit. However, this argument does not yield a general formula for $\eta$, and as such we will not pursue this here. 
	
\begin{figure}[ht]
	\subfloat[The attractor for $\Phi_1$.]{
		\begin{minipage}[c][1\width]{
				0.3\textwidth}
			\centering
			\includegraphics[width=1\textwidth]{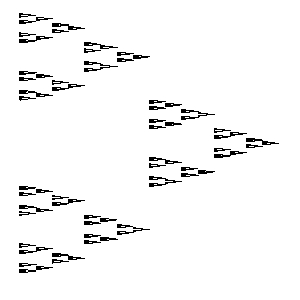}
	\end{minipage}}
	\hfill 	
	\subfloat[The attractor for $\Phi_2$.]{
		\begin{minipage}[c][1\width]{
				0.3\textwidth}
			\centering
			\includegraphics[width=1\textwidth]{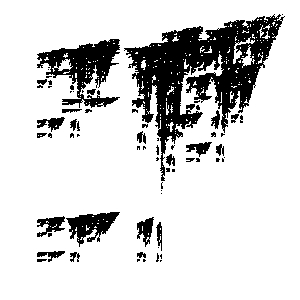}
	\end{minipage}}
	\hfill	
	\subfloat[The attractor for $\Phi_3$.]{
		\begin{minipage}[c][1\width]{
				0.3\textwidth}
			\centering
			\includegraphics[width=1\textwidth]{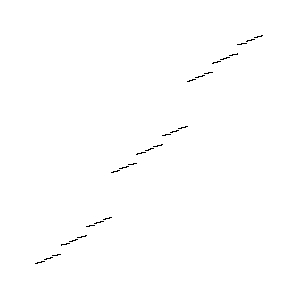}
	\end{minipage}}
	\caption[]{Let \\
		\begin{minipage}{\linewidth}
			\begin{align*} \Phi_{1}=\Big\{&\varphi_{1}(x,y)=\Big(\frac{x}{2},\frac{y}{3}\Big),\, \varphi_{2}(x,y)=\Big(\frac{x}{2},\frac{y+2}{3}\Big),\\ & \varphi_{3}(x,y)=\Big(\frac{x+1}{2},\frac{y+1}{3}\Big)\Big\}, \\
				\Phi_{2}=\Big\{&\varphi_{1}(x,y)=\Big(\frac{x}{3},\frac{y}{5}\Big),\, \varphi_{2}(x,y)=\Big(\frac{x}{3},\frac{4y+5}{10}\Big),\\
				&\varphi_{3}(x,y)=\Big(\frac{x+1}{2},\frac{y+1}{2}\Big),\,	\varphi_{4}(x,y)=\Big(\frac{x+2}{5},\frac{9y}{10}\Big),\\ &\varphi_{5}(x,y)=\Big(\frac{7x+1}{10},\frac{3y+6}{10}\Big),\,
				\varphi_{6}(x,y)=\Big(\frac{3x+6}{10},\frac{2x+2}{5}\Big)\Big\}, \\
				\Phi_{3}=\Big\{&\varphi_{1}(x,y)=\Big(\frac{x}{3},\frac{y}{5}\Big),\, \varphi_{2}(x,y)=\Big(\frac{x+1}{3},\frac{y+2}{5}\Big),\\
				&\varphi_{3}(x,y)=\Big(\frac{x+2}{3},\frac{y+4}{5}\Big)\Big\}.
			\end{align*}
		\end{minipage} 
	\vspace{1mm}
	
		The attractor for $\Phi_{1}$ is a Bedford--McMullen carpet to which Theorem~\ref{t:main} can be applied, and $\Phi_{2}$ is an overlapping IFS to which Theorem~\ref{t:main} applies. Notice that for both $\Phi_{1}$ and $\Phi_{2}$ the maps $\varphi_{1}$ and $\varphi_{2}$ have the same horizontal component but have vertical components with distinct fixed points. Therefore $\Phi_{1}$ and $\Phi_{2}$ can both be realised as fibre product CIFSs. Notice however that $\Phi_{3}$ cannot be realised as a fibre product CIFS because the vertical slices through this set always consist of singletons.}\label{f:ifs}
\end{figure}
	
	\subsection{Structure of the paper} 
	In Section~\ref{s:applications} we prove several applications, assuming Theorems~\ref{thm:analyticthm} and~\ref{t:main}. These applications relate to the Fourier uniqueness problem, Fractal Uncertainty Principles, Fourier restriction estimates, quantitative equidistribution properties of numbers in fractal sets, and conditional mixing. 
	In Section~\ref{s:analyticthm} we prove Theorem~\ref{thm:Analytic pushforward thm} and observe that combining this with existing results in the literature \cite{AHW,BS} gives Theorem~\ref{thm:analyticthm}. 
	In Section~\ref{s:mainthm} we prove Theorem~\ref{t:main}; an informal outline of the proof, which involves disintegrating the measure~$\mu$, using large deviations theory, and applying an Erd\H{o}s--Kahane-type argument, is given in Section~\ref{s:outline}. 
	We also prove that the Fourier transform of countably generated self-similar measures decays outside a sparse set of frequencies (see Corollary~\ref{c:tsujii}), which may be of interest in its own right. 
	We will conclude in Section~\ref{Future directions} with some discussion of future directions. 
	
\subsection{Notation}
Throughout this paper we will adopt the following notational conventions. We write $\O(X)$ to denote a quantity bounded in modulus by $CX$ for some $C>0$. We also write $X \preceq Y$ if $X =\O(Y)$, and $X \approx Y$ if $X \preceq Y$ and $Y \preceq X$. We write $\O_{k}(X)$, $X\preceq_{k} Y$, or $X\approx_{k} Y$ when we want to emphasise that the underling constant $C$ depends upon some parameter $k$. We similarly write $o_k(1)$ to denote a function of $k$ which tends to $0$ as $k \to \infty$. 
We let $e(y)=e^{-2\pi i y}$ for $y\in \R$.

We will also use the following standard notation from fractal geometry. Given a CIFS $\Phi=\{\varphi_{a}\}_{a\in \A}$ with digit set $\A$, we let $\A^*=\cup_{n=1}^{\infty}\A^n$. Moreover, given $\a=(a_1,\ldots,a_n)\in \A^*$ we let $\varphi_{\a}=\varphi_{a_1}\circ \cdots \circ \varphi_{a_n}$. If a CIFS consists of similarities then for each $\a=(a_1,\ldots,a_n)\in \A^{*}$ we let $r_{\a}=\prod_{j=1}^{n}r_{a_j}$ denote the product of the contraction ratios. Similarly, given a probability vector $\p=(p_a)_{a\in \A}$ and a word $\a=(a_1,\ldots,a_n)\in \A^{*}$ we let $p_{\a}=\prod_{j=1}^{n}p_{a_j}$.

	\section{Applications}\label{s:applications}
	Equipped with suitable knowledge about the Fourier transform of a measure one can derive a number of interesting applications. In this section we will detail several consequences of our results.
	
	\subsection{The Fourier uniqueness problem} 
	A set $X\subset[0,1]$ is called a set of uniqueness if every trigonometric series \[\sum_{n\in \mathbb{Z}}a_ne^{2\pi ix}\] with coefficients $a_n\in \mathbb{C}$ that takes the value $0$ for all $x \in [0,1]\setminus X$ is a trivial trigonometric series in the sense that $a_n=0$ for all $n\in \mathbb{Z}$. 
	It is known that every countable closed set is a set of uniqueness, whereas every set of positive Lebesgue measure is a set of multiplicity. 
	We refer the reader to \cite{KL} and \cite{JialunSahlsten1} for a more detailed introduction to this topic. If a set $X$ supports a Rajchman measure, then a result of Salem \cite{Salem} asserts that $X$ is a set of multiplicity for the Fourier uniqueness problem. Combining this result with Theorem~\ref{thm:analyticthm} immediately implies the following statement.
	
	\begin{thm}
		Let $X$ be the attractor for a IFS $\Phi=\{\varphi_a \colon [0,1] \to [0,1] \}_{a\in \A}$ consisting of analytic maps. If there exists $a \in \A$ such that $\varphi_{a}$ is not an affine map then $X$ is a set of multiplicity. 
	\end{thm}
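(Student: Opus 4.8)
The plan is to deduce this directly from Theorem~\ref{thm:analyticthm} together with Salem's theorem on sets of multiplicity, as the surrounding discussion indicates; the only thing that must be supplied is a single Rajchman probability measure supported on $X$. First I would produce such a measure: fix any probability vector $\p=(p_a)_{a\in \A}$ (recall that throughout the paper all weights are taken strictly positive), and let $\mu$ be the self-conformal measure for $\Phi$ and $\p$, whose existence and uniqueness is guaranteed by Hutchinson's theorem \cite{Hut}. Since the weights are strictly positive we have $\supp \mu = X$, and since $\Phi$ is non-trivial the set $X$ is uncountable and $\mu$ is non-atomic.

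Next I would invoke Theorem~\ref{thm:analyticthm}. Its hypotheses are exactly what is assumed here: $\Phi$ is a non-trivial IFS consisting of analytic maps, and at least one $\varphi_a$ is not affine. Hence there exist $C,\eta>0$ with $|\widehat{\mu}(\xi)|\leq C|\xi|^{-\eta}$ for all $\xi\neq 0$, and in particular $\widehat{\mu}(\xi)\to 0$ as $|\xi|\to\infty$, so $\mu$ is a Rajchman measure. Finally, since the closed set $X$ supports the Rajchman measure $\mu$, Salem's theorem \cite{Salem} yields that $X$ is a set of multiplicity.

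There is no genuine obstacle in this argument: the substantive input is the qualitative Fourier decay of the self-conformal measure, which is precisely the content of Theorem~\ref{thm:analyticthm}, while the implication ``$X$ supports a Rajchman measure $\implies$ $X$ is a set of multiplicity'' is Salem's classical result quoted above. The only points one should take a moment to check are that a self-conformal measure exists and is genuinely supported on all of $X$ (both immediate from Hutchinson's theorem and positivity of the weights), and that the non-triviality hypothesis of Theorem~\ref{thm:analyticthm} coincides with the non-triviality of $\Phi$ assumed in the statement, which it does.
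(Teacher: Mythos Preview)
Your proof is correct and follows exactly the same route as the paper: apply Theorem~\ref{thm:analyticthm} to any self-conformal measure for $\Phi$ to obtain a Rajchman measure supported on $X$, and then invoke Salem's theorem~\cite{Salem}. The paper states this in one sentence without spelling out the auxiliary checks you added, but the argument is the same.
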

	Theorem~\ref{t:main} implies an analogous statement about non-linear images of fractal sets being sets of multiplicity. 
	
	\subsection{Fractal Uncertainty Principles}
Fractal Uncertainty Principles are tools that (roughly) state that a function cannot be localised in position and frequency near a fractal set. 
This idea is made precise as follows: we say that sets $X,Y\subset\mathbb{R}^d$ satisfy a Fractal Uncertainty Principle at the scale $h>0$ with exponent $\beta>0$ and constant $C>0$ if for all $f\in L^{2}(\R^d)$ with 
\[\{ \xi \in \R^d : \widehat{f}(\xi) \neq 0\} \subset \{h^{-1}y:y\in Y\}, \] 
we have 
\[\|f\|_{L^2(X)} \leq Ch^{\beta} \|f\|_{L^2(\R^d)}.\] 
Here, $L^2(X)$ and $L^2(\R^d)$ are defined using Lebesgue measure. 
The existence of Fractal Uncertainty Principles for neighbourhoods of fractal sets arising from hyperbolic dynamics has had significant applications in the field of quantum chaos (see \cite{DJ,DJN}). For more on Fractal Uncertainty Principles we refer the reader to the survey by Dyatlov \cite{Dyatlov} and the paper of Dyatlov and Zahl \cite{DZ}. 

Given a finite Borel measure $\mu$ supported on $\mathbb{R}$, let 
\begin{align*}
\delta(\mu) \coloneqq \inf\{\delta\geq 0:\exists& C>0 \textrm{ s.t. } \forall x\in \supp(\mu) \textrm{ and } r\in (0,1)\\*
&\textrm{ we have } \mu(B(x,r))\geq Cr^{\delta}\}.
\end{align*}The quantity $\delta(\mu)$ is the upper Minkowski dimension of $\mu$ introduced in \cite{FFK}. Moreover, \cite[Theorem~2.1]{FFK} asserts that for all compact $K \subset \R^d$, 
\[ \overline{\dim}_{\mathrm B} K = \min\{ \delta(\mu) : \mu \textrm{ is a fully supported finite Borel measure on } K \},\] 
so in the theorem below $\delta_j \geq \dim_{\mathrm B} K_j = \dim_{\mathrm H} K_j$. 

Using \cite[Proposition~1.5]{BS} together with the Fourier decay guaranteed by Theorem~\ref{thm:analyticthm}, we immediately have the following statement.

\begin{thm}
	\label{t:FUP}
Let $K_1,K_2\subset \R$ be the attractors for two analytic IFSs $\Phi_{1}$ and $\Phi_{2}$, both acting on $[0,1]$. 
Assume that $\Phi_{2}$ contains a contraction that is not an affine map. 
For $j\in \{1,2\}$, 
let \[\delta_{j}=\inf\{\delta(\mu): \mu \textrm{ is a self-conformal measure for }\Phi_{j}\},\]
and assume that $\delta_1 + \delta_2 < 1$. 
Then for all $\kappa \in (\delta_{1}+\delta_{2},1)$ there exists $C>0$ (depending upon $\kappa$, $\Phi_1$, $\Phi_2$) such that for all $h>0$, the Fractal Uncertainty Principle is satisfied for $X=K_1+B(0,h)$ and $Y=K_{2}+B(0,h)$ at scale $h$ with exponent $\beta = \frac{1}{2} - \frac{\kappa}{2}$ and constant $C$.
\end{thm}

By considering suitable iterates of the $\Phi_{j}$, it is possible to improve the exponent $\beta$. Indeed, if $\Phi_{1}$ and $\Phi_{2}$ satisfy the strong separation condition then for any $\kappa>\dim_{\mathrm H} K_{1}+\dim_{\mathrm H} K_{2}$ the conclusion of Theorem~\ref{t:FUP} holds. 
We expect that a similar statement holds without assuming the strong separation condition. 
Unlike the Fractal Uncertainty Principle of Bourgain and Dyatlov \cite[Theorem~4]{BD2} we make no assumption of porosity or Ahlfors--David regularity. On the other hand, Theorem~\ref{t:FUP} can be applied only if $\dim_{\mathrm H} K_{1}+\dim_{\mathrm H} K_{2} < 1$, and we make a non-linearity assumption on $\Phi_2$ to guarantee Fourier decay. 

\subsection{Fourier restriction estimates}
If $1<p \leq 2$, the classical Hausdorff--Young inequality allows one to view the Fourier transform as a map $L^p(\R^d) \to L^q(\R^d)$, where $1/p + 1/q = 1$. Therefore at first sight $\hat{f}$ may appear to be definable only almost everywhere. 
Fourier restriction theory relates to the interesting fact that $\hat{f}$ can in fact be restricted to certain sets of zero Lebesgue measure in a meaningful way, and our next application of Theorem~\ref{thm:analyticthm} is an example of this. 
	
\begin{thm}\label{t:restriction}
Let $\{\varphi_a\colon [0,1] \to [0,1]\}_{a\in \A}$ be an IFS such that each $\varphi_{a}$ is analytic, and suppose that there exists $a \in \mathcal{A}$ such that $\varphi_{a}$ is not an affine map. 
Let $X$ be the attractor and $\mu$ be any self-conformal measure for the IFS (so $\supp (\mu) = X$). 
Then there exists $p_{\mu}>1$ such that for all $p \in [1,p_{\mu}]$ there exists $C_p >0$ such that 
\[ \left( \int_X |\hat{f}(\xi)|^2 d\mu(\xi)\right)^{1/2} \leq C_p ||f||_{L^p(\R)} \]
for all Schwartz functions $f \colon \R \to \mathbb{C}$. 
Therefore the linear operator $f \mapsto \hat{f}$ extends uniquely to a bounded linear operator $L^p(\R,\mbox{Lebesgue}) \to L^2(X,\mu)$. 
\end{thm}
\begin{proof}
By Theorem~\ref{thm:analyticthm}, $\mu$ has polynomial Fourier decay. 
It is straightforward to deduce (see the short argument from \cite[page~5]{Mitsis2002restriction} for example) that $\mu$ has a positive Frostman exponent, meaning that there exists $C,s>0$ such that $\mu(B(x,r)) \leq C r^s$ for all $x \in \R$. 
We note that the existence of such a Frostman exponent could also be proved more directly by adapting the proof of \cite[Proposition 2.2]{FL}. 
 Given these two properties of $\mu$, the restriction estimate follows from a result of Mockenhaupt \cite[Theorem~4.1]{Mockenhaupt2000restriction} (see also Mitsis \cite[Corollary~3.1]{Mitsis2002restriction} and Stein's earlier argument \cite[page~353]{Stein1993harmonic}). 
\end{proof}

	\subsection{Normal numbers and effective equidistribution}
	Given an integer $b\geq 2$, a real number $x$ is said to be normal in base $b$ if the sequence $(b^nx)_{n=1}^{\infty}$ is uniformly distributed modulo one. This property is equivalent to the base-$b$ expansion of $x$ observing all finite words with the expected frequency. A well-known result due to Borel~\cite{Bor} states that Lebesgue almost every $x$ is normal in every base. 
	Despite this result, it is a notoriously difficult problem to verify that a given real number (such as $\pi$, $e$ or $\sqrt{2}$) is normal in a given base. A more tractable line of research is the problem of determining when a Borel probability measure gives full mass to the set of normal numbers in a given base. This problem was considered by Cassels in \cite{Cas}, who proved that with respect to the natural measure on the middle third Cantor set, almost every $x$ is normal in base $b$ if $b$ is not a power of $3$. In~\cite{Hos} Host proved that if $p,q\in\mathbb{N}$ satisfy $\gcd(p,q)=1$ and $\mu$ is an invariant and ergodic measure for the map $x \mapsto px \mod 1$ with positive entropy, then $\mu$-almost every $x$ is normal in base $q$. This result was generalised by Lindenstrauss~\cite{Lindenstrauss}, and then generalised further by Hochman and Shmerkin in~\cite{HocShm} to the case where $\frac{\log p}{\log q}\notin\mathbb{Q}$. Hochman and Shmerkin also proved that for a stationary measure $\mu$ for a well separated self-similar IFS, $\mu$-almost every $x$ is normal in base $b$ for every self-similar measure if $\frac{\log |r_a|}{\log b}\notin\mathbb{Q}$ for some contraction ratio $r_a$. 
	 Hochman and Shmerkin's result was extended by Algom, the first named author, and Shmerkin in \cite{ABS} to the case where the IFS could be potentially overlapping. Recently B\'{a}r\'{a}ny et al. \cite{BKPW} further generalised this result to apply to self-conformal measures. We also refer the reader to the work of Dayan, Ganguly and Weiss~\cite{DGW}, who considered a complementary approach. 
	 They showed that under suitable irrationality assumptions for the translations appearing in the IFS one can prove normality results. For more on this topic we refer the reader to the book by Bugeaud \cite{Bug} and the references therein. 
     
     In this paper we will show that if $\Phi$ is an IFS consisting of analytic maps, one of which is not an affine map, then $\mu$-almost every $x$ is normal in all bases for every self-conformal measure $\mu$. In fact we are able to show that for $\mu$-almost every $x$ a more quantitative normality result holds (see Theorem~\ref{t:Normal thm} below). 
     These results will be proved by using the fact that our measure has polynomial Fourier decay and therefore existing results in the literature can be applied. 
     In particular, a classical criterion of Davenport, Erd\H{o}s, and LeVeque \cite{DEL} gives normality of $\mu$-typical points in all bases, and more recent results of Pollington et al.~\cite{PVZZ} immediately give \emph{effective} equidistribution results for $q_n x \mod 1$ for certain strictly increasing sequences $(q_n)$, which can be improved further in the case $q_n = b^n$ for some natural number $b$. 
	 
	 In addition to applications on normal numbers in fractal sets, our main results will also be applied to certain shrinking target problems and Diophantine approximation with $b$-adic rationals. Given a sequence of real numbers $(q_n)_{n=1}^{\infty}$, a function $\psi\colon \N\to[0,1/2]$, and $\gamma\in [0,1]$, we associate the set \[W((q_n),\gamma,\psi)\coloneqq \{x\in [0,1]:\|q_{n}x-\gamma\|\leq \psi(n)\textrm{ for infinitely many }n\in \N\}.\]  Here and throughout we let $\|\cdot\|$ denote the distance to the integers, i.e. $\|x\|=d(x,\mathbb{Z})$ for all $x\in \mathbb{R}$. 
	 For some of our results we will assume that the sequence $(q_{n})_{n=1}^{\infty}$ is lacunary, which means that there exists $K>1$ such that $q_{n+1}/q_{n}>K$ for all $n\in \N$.
	 
	 In the special case where $(q_n)_{n=1}^{\infty}=(b^n)_{n=1}^{\infty}$ we will denote 
	 $W((q_n),\gamma,\psi)$ by $W_{b}(\gamma,\psi)$. Taking $\gamma=0$ we see that $W_{b}(0,\psi)$ coincides with the set of $x$ that can be approximated by a $b$-adic rational $\frac{p}{b^{n}}$ with error $\frac{\psi(n)}{b^n}$ for infinitely many $n$. In \cite{Phi} Philipp proved that for all $b\geq 2$, $\psi\colon \N\to[0,1/2]$, and $\gamma\in [0,1]$ we have 
	 \[ \L(W_{b}(\gamma,\psi)) = \left\{ \begin{array}{ll}
	 	1 & \mbox{if $\sum_{n=1}^{\infty}\psi(n)=\infty$};\\
	 	0 & \mbox{if $\sum_{n=1}^{\infty}\psi(n)<\infty$}.\end{array} \right. \] 
	 	Here $\L$ denotes the one-dimensional Lebesgue measure. The convergence part of Philipp's result is simply the Borel--Cantelli lemma. The more interesting and technically demanding statement is the divergence part. As is the case for normal numbers, it is difficult to verify that specific constants belong or do not belong to a given $W_{b}(\gamma,\psi)$. 
	 	Therefore it is natural to consider the related problem of determining whether a probability measure exhibits analogous behaviour to that observed for the Lebesgue measure. This idea is expressed more formally in the following conjecture due to Velani\footnote{Velani originally formulated this conjecture with $\psi$ monotonically decreasing and $\gamma=0$, as it is written in~\cite{ACY}.}.
 	\begin{conjecture}
 		\label{Velani conjecture}
 		Let $\psi\colon \mathbb{N}\to [0,1/2]$, $\gamma\in[0,1]$, and let $\mu$ be the natural measure on the middle third Cantor set. Then
 		\[ \mu(W_{2}(\gamma,\psi))=
 		\begin{cases}
 			1 \quad \text{ if }\sum_{n=1}^{\infty}\psi(n)=\infty, \\
 			0 \quad \text{ if }\sum_{n=1}^{\infty}\psi(n)<\infty.
 		\end{cases}
 		\]
 	\end{conjecture}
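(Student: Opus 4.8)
The final displayed statement in the excerpt is Conjecture~\ref{Velani conjecture}, which is an open problem: the natural measure on the middle-third Cantor set is self-similar with all contraction ratios equal to $1/3$, so none of the Fourier decay results in this paper apply to it (indeed, it is a classical fact, going back to work surrounding the Cantor--Lebesgue theorem, that $\widehat{\mu}(3^n) = \widehat{\mu}(1) \neq 0$ for all $n$, so $\mu$ is emphatically \emph{not} Rajchman). Thus there is no proof to propose in the ordinary sense. What I can sketch is the shape a proof \emph{would} take, together with the point at which all current techniques break down.

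\textbf{The convergence direction.} Here the statement is a triviality and genuinely provable: if $\sum_n \psi(n) < \infty$, then by the Borel--Cantelli lemma $\mu(W_2(\gamma,\psi)) = 0$ for \emph{any} Borel probability measure $\mu$ on $[0,1]$, provided only that $\mu$ is not too concentrated on the relevant dyadic neighbourhoods. More precisely, $W_2(\gamma,\psi) = \limsup_n E_n$ where $E_n = \{x : \|2^n x - \gamma\| \le \psi(n)\}$ is a union of $\sim 2^n$ intervals each of length $2\psi(n) 2^{-n}$. For the middle-third Cantor measure $\mu$, the quasi-self-similarity gives $\mu(E_n) \preceq \psi(n)^{s}$ for $s = \log 2/\log 3$ when $\psi(n) 2^{-n}$ is comparable to a cylinder scale, and in general $\mu(E_n) \preceq \psi(n)$ by a cruder ball-counting estimate (each interval of $E_n$ meets $\O(1)$ cylinders of generation $n\log 2/\log 3$, on each of which $\mu$ has mass $\approx 2^{-n}$, and the interval occupies a $\psi(n)$-fraction of such a cylinder). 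Either way $\sum_n \mu(E_n) \preceq \sum_n \psi(n) < \infty$, and Borel--Cantelli closes the case. So the first line of the conjecture is not the issue.

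\textbf{The divergence direction --- where it all hangs.} The difficulty is entirely in the case $\sum_n \psi(n) = \infty$, where one needs $\mu(W_2(\gamma,\psi)) = 1$. The standard route is a divergence Borel--Cantelli argument (Erd\H{o}s--Rényi / Kochen--Stone): one would show $\sum_n \mu(E_n) = \infty$ and control the correlations $\mu(E_m \cap E_n)$ by $\mu(E_m)\mu(E_n)$ up to an acceptable error summed over $m,n$. The lower bound $\sum_n \mu(E_n) = \infty$ can be arranged (this is where $b=2$ not being a power of $3$ enters: the doubling map is ``ergodic enough'' relative to the ternary structure that $E_n$ is not systematically avoided by the Cantor set, a point that can be made rigorous via equidistribution of $(2^n x \bmod 1)$ for $\mu$-a.e.\ $x$, itself a consequence of the normal-numbers results cited in the excerpt). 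The genuine obstacle is the quasi-independence estimate: controlling $\mu(E_m \cap E_n)$ requires understanding how the measure $\mu$ interacts with \emph{two} different dyadic scales simultaneously, and it is precisely here that a polynomial Fourier decay bound $|\widehat{\mu}(\xi)| \le C|\xi|^{-\eta}$ would be the natural tool --- it would let one expand $\mathbf{1}_{E_n}$ in Fourier series, bound cross terms by the decay, and sum. For non-linear analytic IFS, Theorem~\ref{thm:analyticthm} supplies exactly this input, which is how the companion normal-numbers and shrinking-target results in this paper (Theorem~\ref{t:Normal thm}) are obtained. For the middle-third Cantor measure the Fourier transform does not decay at all along the geometric sequence $(3^n)$, and no substitute for the quasi-independence estimate is currently known: this is the heart of why Conjecture~\ref{Velani conjecture} remains open, and I would not expect the methods of the present paper to resolve it. A conditional statement --- ``if one could establish mass transference / quasi-independence for $\mu$ at pairs of incommensurable scales, the conjecture would follow'' --- is the most one can honestly offer as a plan.
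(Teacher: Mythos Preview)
You are right that this is a conjecture, not a theorem, and the paper does not attempt to prove it; it merely states it, notes partial progress in \cite{ABCY,ACY,Bak}, and then proves the analogous statement (Theorem~\ref{t:Normal thm}) for self-conformal measures with genuine non-linearity, where polynomial Fourier decay is available. Your overall assessment of why the divergence direction is out of reach for the Cantor measure is sound.

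However, your treatment of the convergence direction contains a genuine error, and the claim that it is ``a triviality and genuinely provable'' is not correct. In your ball-counting argument you assert that a generation-$m$ cylinder with $m \approx n\log 2/\log 3$ carries $\mu$-mass $\approx 2^{-n}$; in fact the mass is $2^{-m} = 2^{-ns}$ with $s = \log 2/\log 3 < 1$, which is much larger. You also implicitly use that the measure is uniform within a cylinder when you multiply by the $\psi(n)$-fraction, but $\mu$ restricted to a cylinder is again a scaled Cantor measure. The honest Frostman/Ahlfors bound gives only $\mu(E_n) \preceq \psi(n)^{s}$, and since $s<1$ the series $\sum_n \psi(n)^{s}$ can diverge even when $\sum_n \psi(n)$ converges (take $\psi(n) = n^{-1/s + \delta}$ for small $\delta>0$). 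So Borel--Cantelli does not close the convergence case in the generality stated, and indeed this direction is also part of what makes Conjecture~\ref{Velani conjecture} open; the cited works \cite{ABCY,ACY,Bak} address it only under additional hypotheses on $\psi$.
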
 
	 Initial progress towards Conjecture~\ref{Velani conjecture} has been made in \cite{ABCY,ACY,Bak}. Velani's conjecture is formulated for the natural measure on the middle third Cantor set, but one could just as easily ask whether the same conclusion holds for any fractal measure. In Theorem~\ref{t:Normal thm} below we will prove the following statement which shows that a version of Velani's conjecture holds when $\mu$ is a self-conformal measure coming from a suitable IFS. 
     In fact, in this theorem and others, we will prove that a wide class of fractal measures satisfy the following three equidistribution properties. 
     In what follows, given $\psi\colon \N\to [0,1/2]$ we let 
	 \[ \Sigma(N) \coloneqq \sum_{n=1}^{N}\psi(n).\] 
     \begin{itemize}
        \item \textbf{Property~(A):} We say that a measure $\mu$ satisfies this property if given any sequence of positive real numbers $(q_n)_{n=1}^{\infty}$ satisfying $\inf_{n\in \N} (q_{n+1}-q_{n})>0$, the sequence $(q_n x)_{n=1}^{\infty}$ is uniformly distributed modulo one for $\mu$-almost every $x$. 
        \item \textbf{Property~(B):} We say that a measure $\mu$ satisfies this property if given any lacunary sequence $(q_n')_{n=1}^{\infty}$ of natural numbers, $\psi\colon \mathbb{N}\to [0,1/2]$, $\gamma\in [0,1]$ and $\epsilon>0$, then 
	    \begin{equation}\label{e:propb}
        \#\{1\leq n\leq N:\|q_n' x-\gamma\|\leq \psi(n)\}=2\Sigma(N)+\O\left(\Sigma(N)^{2/3}\left(\log (\Sigma(N)+2)\right)^{2+\epsilon}\right)
        \end{equation}
	    for $\mu$-almost every $x$.
        \item \textbf{Property~(C):} We say that a measure $\mu$ satisfies this property if for all integers $b\geq 2$, $\psi\colon \mathbb{N}\to [0,1/2]$, $\gamma\in [0,1]$ and $\epsilon>0$, one has   
	    \begin{equation}\label{e:propc}
        \#\{1\leq n\leq N:\|b^{n} x-\gamma\|\leq \psi(n)\}=2\Sigma(N)+\O\left(\Sigma(N)^{1/2}\left(\log (\Sigma(N)+2)\right)^{2+\epsilon}\right)
        \end{equation}
	    for $\mu$-almost every $x$. 
     \end{itemize}

     \begin{prop}[\cite{DEL,PVZZ}]\label{p:equi}
         If a measure $\mu$ on $\R$ has polynomial Fourier decay then it satisfies Properties~(A), (B) and (C). 
     \end{prop}
     \begin{proof}
     Property~(A) is a consequence of the well-known criterion of Davenport, Erd\H{o}s, and LeVeque~\cite{DEL}. 
     Property~(B) holds by \cite[Theorem~1]{PVZZ} and Property~(C) holds by \cite[Theorem~3]{PVZZ} due to Pollington et al. 
     \end{proof}

       We make two remarks about Property~(C). 
    \begin{remark}
        \begin{itemize}
            \item For measures with fast enough (for example polynomial) Fourier decay, Property~(C) also holds for certain non-lacunary sequences, for example when $b^n$ is replaced by the increasing sequence of numbers of the form $\{2^n 3^m\}$ for $n,m \in \N$. We refer the reader to \cite[Theorem~3]{PVZZ} for the most general formulation. 
            \item Ignoring the logarithmic error terms, for the class of self-conformal measures we consider one cannot hope to improve the exponent $1/2$ in Property~(C). This is a consequence of the law of the iterated logarithm. 
        \end{itemize}
    \end{remark}
	
	\begin{thm}
	\label{t:Normal thm}
	Let $\Phi=\{\varphi_a \colon [0,1] \to [0,1] \}_{a\in \A}$ be an IFS such that each $\varphi_{a}$ is analytic. Suppose that there exists $\varphi_{a}$ which is not an affine map. Then every self-conformal measure $\mu$ satisfies Properties~(A), (B) and (C). 
	\end{thm}
	\begin{proof}
        By Theorem~\ref{thm:analyticthm}, $\mu$ has polynomial Fourier decay. The result therefore follows from Proposition~\ref{p:equi}. 
	\end{proof}
  
	We also have the following result for non-linear pushforwards. 
    We emphasise that a pushforward measure $F\mu$ on $\R$ satisfies Properties~(A), (B) and (C) if and only if for all $(q_n)$, $(q_n')$, $\psi$, $\gamma$ and $\varepsilon$ as in the statements of these properties, for $\mu$-almost every $y$, the real number $x \coloneqq F(y)$ satisfies~\eqref{e:propb} and~\eqref{e:propc} and $(q_n x)_{n=1}^{\infty}$ is equidistributed modulo~$1$. 
	
	\begin{thm}
		\label{t:normalpushforward}
		Let $\Phi$ be a fibre product CIFS for some $\Psi=\{\psi_j\}_{j\in J}$ and $\{\Psi_{j}\}_{j\in J}$. 
		Let $\p=(p_{j,l})_{j\in J,l\in L_j}$ be a probability vector and assume that there exists $\tau>0$ such that 
		\begin{equation*}
			\sum_{j \in J} \sum_{l \in L_j} p_{j,l} |r_{j,l}|^{-\tau} < \infty.
		\end{equation*}
		Let $\mu$ be the stationary measure for $\Phi$ and $\p$. Suppose that $F\colon [0,1]^{d+1}\to \R$ is a $C^2$ function such that $\frac{\partial^{2}F}{\partial x_{d+1}^2}(x) \neq 0$ for $\mu$-almost every $x$. 
        Then the pushforward measure $F\mu$ satisfies Properties~(A), (B) and (C). 
	\end{thm}

\begin{proof}
Let $\Phi=\{\varphi_{j,l}\}_{j\in J,l\in L_j}$, $\p$, and $F$ satisfy the assumptions of our theorem. Let $\A=\{(j,l):j\in J,\, l\in L_{j}\}$, and let $\pi\colon \A^{\N}\to [0,1]^{d+1}$ be the map given by \[ \pi((a_i))=\lim_{n\to\infty}(\varphi_{a_1}\circ \cdots \circ \varphi_{a_n})(0).\] 
We also let $m$ denote the infinite product measure on $\A^{\N}$ corresponding to the probability vector $\p$. Importantly, the stationary measure $\mu$ satisfies $\mu=\pi m$. Using this property and our assumptions on $F$, we have that $\frac{\partial^{2}F}{\partial x_{d+1}^2}(\pi((a_i))) \neq 0$ for $m$-almost every $(a_i)$. 
Appealing now to the fact $F$ is a $C^2$ function, we see that for $m$-almost every $(a_i)$ there exists a prefix $(a_1,\ldots,a_n)$ of minimal length such that $\frac{\partial^{2}F}{\partial x_{d+1}^2}(x) \neq 0$ for all $x\in (\varphi_{a_1}\circ \cdots \circ \varphi_{a_n})([0,1]^{d+1})$. 
We let $G_{F}\subset \A^{*}$ denote the countable set of prefixes arising from this observation. The set $G_{F}$ satisfies the following properties:
\begin{enumerate}
	\item\label{i:nonprefix} If $\a,\b\in G_{F}$ are distinct then $\a$ is not a prefix of $\b$, and $\b$ is not a prefix of $\a$. 
	\item\label{i:fullmeas} \[ m\left(\bigcup_{\a\in G_{F}}\{(b_i)\in \A^{\N}:\a \text{ is a prefix of }(b_i)\}\right)=1.\]
\end{enumerate}
Property~\eqref{i:nonprefix} follows because by definition $G_{F}$ is the set of prefixes of minimal length satisfying $\frac{\partial^{2}F}{\partial x_{d+1}^2}(x) \neq 0$ for all $x\in (\varphi_{a_1}\circ \cdots\circ \varphi_{a_n})([0,1]^{d+1})$. 
Property~\eqref{i:fullmeas} holds because $m$-almost every $(a_i)$ is contained in one of the sets appearing in this union. Using properties~\eqref{i:nonprefix} and~\eqref{i:fullmeas} and iterating the self-similarity relation $\mu=\sum_{a\in \A}p_{a}\varphi_{a}\mu$ gives  
\[\mu=\sum_{\a\in G_{F}}p_{\a}\varphi_{\a}\mu.\] 
This in turn implies that 
\begin{equation}
	\label{e:pushforward split}
F\mu=\sum_{\a\in G_{F}}p_{\a}(F\circ \varphi_{\a})\mu.
\end{equation} 

Appealing to the definition of $G_{F}$ and the chain rule, for each $\a\in G_{F}$, the map $F\circ \varphi_{\a}$ satisfies 
\[ \frac{\partial^{2}(F\circ \varphi_{\a})}{\partial x_{d+1}^{2}}\neq 0 \] 
for all $x\in [0,1]^{d+1}$. By Theorem~\ref{t:main}, this implies that for all $\a\in G_{F}$ there exists $\eta>0$ such that $|\widehat{(F\circ \varphi_{\a})\mu}(\xi)|\preceq_{F,\a} |\xi|^{-\eta}$ for all $\xi\neq 0$. By Proposition~\ref{p:equi}, $(F\circ \varphi_{\a})\mu$ satisfies Properties~(A), (B) and~(C) for any $\a\in G_{F}$. Combining this observation with \eqref{e:pushforward split} completes our proof. 
\end{proof}
Note that in the statement of Theorem~\ref{t:normalpushforward} we only assume $\frac{\partial^{2}F}{\partial x_{d+1}^2}(x) \neq 0$ for $\mu$-almost every $x$, which is why the result was not just an immediate consequence of Theorem~\ref{t:main} and Proposition~\ref{p:equi}. This weaker assumption allows for greater flexibility and yields Corollaries~\ref{c:selfaffine} and~\ref{c:analyticnormal}. 
The IFS considered in Corollary~\ref{c:selfaffine} is of Bedford--McMullen type \cite{Bed,MC}. The arguments given in its proof apply more generally to Gatzouras--Lalley carpets~\cite{GL} and Bara\'nski carpets~\cite{Bar}, but for simplicity we content ourselves with this application to Bedford--McMullen carpets. The IFS $\Phi_{1}$ in Figure~\ref{f:ifs} gives an explicit example of a Bedford--McMullen carpet to which Corollary~\ref{c:selfaffine} applies. 

\begin{cor}\label{c:selfaffine}
Let $m,n\geq 2$ be integers. Let $\Phi=\{\varphi_{i,j}(x,y)=(\frac{x+i}{m},\frac{y+j}{n})\}_{(i,j)\in \A}$ for some $\A\subset \{0,\ldots,m-1\}\times \{0,\ldots,n-1\}$. Suppose that there exists $i\in \{0,\ldots,m-1\}$ such that $(i,j),(i,j')\in \A$ for distinct $j,j'\in \{0,\ldots,n-1\}$. Moreover, assume that there exists $(i_1,j_1),(i_2,j_2)\in \A$ such that $i_1\neq i_2$. 
Let $F\colon [0,1]^{2}\to \R$ be a multivariate polynomial of the form $F(x,y)=\sum_{(d_1,d_2)\in D}c_{d_1,d_2}x^{d_1}y^{d_2}$ for some finite set $D\subset \N^2$ where $c_{d_1,d_2}\neq 0$ for all $(d_1,d_2)\in D$. Assume that $d_2\geq 2$ for some $(d_1,d_2)\in D$. Let $\mu$ be a stationary measure for~$\Phi$ and some probability vector $\p$. 
Then the pushforward measure $F \mu$ satisfies Properties~(A), (B) and (C). 
\end{cor}

\begin{proof}
It is straightforward to see that this $\Phi$ can be realised as a fibre product CIFS. If we want to apply Theorem~\ref{t:normalpushforward} and complete our proof, it therefore remains to verify that $\frac{\partial^{2}F}{\partial y^2}(x,y) \neq 0$ for $\mu$-almost every $(x,y)$. By our assumption that $d_{2}\geq 2$ for some $(d_1,d_2)\in D$, it follows that $\frac{\partial^{2}F}{\partial y^2}(x,y)$ is a non-zero multivariate polynomial. To each $x\in [0,1]$ we associate the function $G_{x}(y)=\frac{\partial^{2}F}{\partial y^2}(x,y)$. We now make three simple observations:
\begin{enumerate}
	\item There exists a finite set $Z\subset [0,1]$ such that for all $x\in [0,1]\setminus Z$ the function $G_{x}$ is a non-zero polynomial, and therefore has finitely many roots. 
	\item The measure $\pi\mu$ is a non-atomic self-similar measure, where $\pi\colon [0,1]^{2}\to [0,1]$ is the projection map given by $\pi(x,y)=x$. This follows from our assumption that there exists $(i_1,j_1),(i_2,j_2)\in \A$ such that $i_1\neq i_2$.
	\item We can disintegrate $\mu$ along vertical fibres to obtain $\mu=\int \mu_{x}\, d(\pi\mu)(x)$. 
	Each $\mu_{x}$ is supported on the vertical fibre given by $\{(x',y)\in [0,1]^2:x'=x\}$. 
	From our assumption that there exists $i\in \{0,\ldots,m-1\}$ such that $(i,j),(i,j')\in \A$ for distinct $j,j'\in \{0,\ldots,n-1\}$, it is not hard to deduce that $\mu_{x}$ is non-atomic for $\pi\mu$-almost every $x$ (see Section~\ref{s:mainthm}). 
\end{enumerate}
Using these three observations, we have the following:
\begin{align*}
	&\mu\left(\left\{(x,y)\in [0,1]^2:\frac{\partial^{2}F}{\partial y^2}(x,y)=0\right\}\right)	\\
	&= \int_{[0,1]} \mu_{x}\left(\left\{(x,y)\in [0,1]^2:\frac{\partial^{2}F}{\partial y^2}(x,y)=0\right\}\right)\, d(\pi\mu)(x)\\
	&= \int_{[0,1]\setminus Z} \mu_{x}\left(\left\{(x,y)\in [0,1]^2:G_{x}(y)=0\right\}\right)\, d(\pi\mu)(x) \\*
	&= 0.
\end{align*}
Therefore all of the assumptions of Theorem~\ref{t:normalpushforward} are satisfied, and applying this theorem yields our result. 
\end{proof}

The following corollary extends \cite[Theorem~1.7]{HocShm}. It generalises this statement in three ways: it permits countably many contractions, the CIFS can contain overlaps, and it considers a more general family of $(q_n)_{n=1}^{\infty}$. No analogue of the counting parts of this corollary appeared in \cite{HocShm}. 
\begin{cor}\label{c:analyticnormal}
Let $\Phi$ be a non-trivial CIFS acting on $[0,1]$ consisting of similarities, and let $\p$ be a probability vector. Assume that 
\[ \sum_{a\in \A}p_a|r_a|^{-\tau}<\infty\]
for some $\tau>0$. Let $\mu$ be the stationary measure for $\Phi$ and $\p$. Let $F\colon [0,1]\to \mathbb{R}$ be an analytic function that is not an affine map. 
Then the pushforward measure $F \mu$ satisfies Properties~(A), (B) and (C). 
\end{cor}

\begin{proof}
Since $F$ is not an affine map, the function $F''$ is not the constant zero function. It is well known that the zeros of a non-zero analytic function cannot have an accumulation point, so $\#\{x:F''(x)=0\}<\infty$. 
Since $\mu$ is non-atomic, it follows that $\mu(\{x:F''(x)=0\})=0$.  
Using the same argument as in the proof of Corollary~\ref{t:self-similar}, our result now follows by applying Theorem~\ref{t:normalpushforward} to a stationary measure $\nu$ for a suitable choice of fibre product IFS and a suitable function $\tilde{F} \colon [0,1]^2 \to \R$. 
\end{proof}

\subsection{Conditional mixing}
Another application of Fourier decay results relates to mixing properties of chaotic dynamical systems. Indeed, Wormell~\cite{Wormell2022} has shown how information on the Fourier decay of stationary measures can be used to establish a phenomenon known as conditional mixing. 
She used results of Mosquera and Shmerkin~\cite{MS}, and Sahlsten and Stevens~\cite{SS}, to show that conditional mixing holds for a class of generalised baker’s maps. Using our results it is possible to deduce that conditional mixing holds for a wider class of such maps. In particular, in \cite[Theorem~4.2]{Wormell2022}, assumption~III can be relaxed in two ways: firstly, to remove the homogeneity assumption on the affine contractions, and secondly, to allow $\psi$ to be either $C^2$ with $\psi'' \neq 0$ (as in~III), or analytic and non-affine. 
	
	\section{Proof of Theorems \ref{thm:Analytic pushforward thm} and \ref{thm:analyticthm}}\label{s:analyticthm}
			In this section we prove Theorem~\ref{thm:Analytic pushforward thm} assuming Corollary~\ref{t:self-similar}. We will then combine Theorem~\ref{thm:Analytic pushforward thm} with results in~\cite{AHW,BS} to deduce Theorem~\ref{thm:analyticthm}. 

    \subsection{Proof of Theorem \ref{thm:Analytic pushforward thm}}
Before presenting our proof we need the following two lemmas. 
The first lemma is well known and says that our measure has a positive Frostman exponent; this property is often used when estimating the Fourier decay of a measure. 
	
	\begin{lemma}\cite[Proposition~2.2]{FL}.
		\label{l:FengFrostman}
		Let $\mu$ be a self-similar measure on $\R$. Then there exist $C,s>0$ such that \[\mu(B(x,r))\leq Cr^{s} \] for all $x\in\R$.
	\end{lemma}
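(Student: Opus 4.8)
The plan is to prove the stronger, scale-free statement that $f(r) \coloneqq \sup_{x \in \R} \mu(B(x,r))$ satisfies $\mu(B(x,r)) \le Cr^{s}$ for all $x$ and $r$, by first establishing a self-referential inequality for $f$ and then iterating it. Write $\Phi = \{\varphi_a(x) = r_a x + t_a\}_{a \in \A}$, put $r_{\min} \coloneqq \min_{a \in \A} |r_a|$ and $\rho \coloneqq \max_{a \in \A} |r_a| < 1$, and recall that $\supp\mu = X$, the attractor, so $\operatorname{diam} X \le 1$.

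\emph{Step 1: $f(r) \to 0$ as $r \to 0^{+}$.} Since $f$ is non-decreasing it has a limit $L \ge 0$ at $0^{+}$; suppose for contradiction that $L > 0$. Choose $r_n \downarrow 0$ and $x_n$ with $\mu(B(x_n,r_n)) > L/2$. Each such ball meets $X$; picking $z_n \in B(x_n,r_n) \cap X$ and passing to a subsequence with $z_n \to z^{*} \in X$ (by compactness of $X$), we have $B(x_n,r_n) \subseteq B(z_n, 2r_n) \subseteq B(z^{*}, \rho')$ for every fixed $\rho' > 0$ and all large $n$, hence $\mu(B(z^{*},\rho')) \ge L/2$ for all $\rho' > 0$. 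Continuity of $\mu$ from above then gives $\mu(\{z^{*}\}) \ge L/2 > 0$, contradicting non-atomicity.

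\emph{Step 2: the self-referential inequality.} For $t \in (0,1]$ consider the cut set $\Gamma_t \coloneqq \{\a = (a_1,\dots,a_n) \in \A^{*} : |r_\a| < t \le |r_{(a_1,\dots,a_{n-1})}|\}$. The lengths of words in $\Gamma_t$ are bounded (since $|r_{(a_1,\dots,a_{n-1})}| \le \rho^{\,n-1}$), so $\Gamma_t$ is a finite antichain met exactly once by every infinite word; iterating $\mu = \sum_a p_a \varphi_a \mu$ along the coding tree therefore gives $\mu = \sum_{\a \in \Gamma_t} p_\a\, \varphi_\a\mu$ with $\sum_{\a \in \Gamma_t} p_\a = 1$, and $r_{\min} t \le |r_\a| < t$ for every $\a \in \Gamma_t$. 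Fix $x \in \R$ and $r > 0$. Because $\varphi_\a$ is a similarity with ratio $|r_\a|$ we have $\varphi_\a\mu(B(x,r)) = \mu\!\bigl(B(\varphi_\a^{-1}(x),\, r/|r_\a|)\bigr) \le f\!\bigl(r/(r_{\min}t)\bigr)$; moreover a word $\a \in \Gamma_t$ contributes to $\mu(B(x,r)) = \sum_{\a} p_\a \varphi_\a\mu(B(x,r))$ only if $\varphi_\a(X) \cap B(x,r) \ne \emptyset$, in which case $\operatorname{diam}\varphi_\a(X) \le |r_\a| < t$ forces $\varphi_\a(X) \subseteq B(x,r+t)$, so $\sum_{\a \in \Gamma_t :\, \varphi_\a(X) \subseteq B(x,r+t)} p_\a \le \sum_{\a} p_\a \varphi_\a\mu(B(x,r+t)) = \mu(B(x,r+t))$. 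Combining these,
\[ \mu(B(x,r)) \le f\!\left( \frac{r}{r_{\min}\,t} \right)\, \mu\bigl(B(x,\,r+t)\bigr) \qquad \text{for all } x \in \R,\ r > 0,\ t \in (0,1]. \]

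\emph{Step 3: iteration.} By Step 1 fix $\delta \in (0,1)$ with $q \coloneqq f(\delta) < 1$. For $r \le r_{\min}\delta$ apply Step 2 with $t = r/(r_{\min}\delta) \in (0,1]$ to obtain $\mu(B(x,r)) \le q\, \mu\bigl(B(x, C'r)\bigr)$, where $C' \coloneqq 1 + (r_{\min}\delta)^{-1} > 1$ depends only on the IFS. Iterating this as many times as the radius remains $\le r_{\min}\delta$ — say $N$ times, where $(C')^{N}r \le r_{\min}\delta$ — yields $\mu(B(x,r)) \le q^{N}\mu\bigl(B(x,(C')^{N}r)\bigr) \le q^{N}$; since $N \ge \log(r_{\min}\delta/r)/\log C' - 1$, this gives $\mu(B(x,r)) \le C r^{s}$ with $s \coloneqq \log(1/q)/\log C' > 0$ and a suitable constant $C$. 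For $r > r_{\min}\delta$ the bound is immediate from $\mu(B(x,r)) \le 1$, after enlarging $C$. I expect the one genuine subtlety to be the use of the cut sets $\Gamma_t$ in Step 2 rather than level-$n$ cylinders: the latter have diameters spread across $[r_{\min}^{\,n}, \rho^{\,n}]$, and running the same argument with them only produces a step of the form $r \mapsto r^{\beta}$ with $\beta = \log(1/\rho)/\log(1/r_{\min}) < 1$ in the non-homogeneous case, whose iteration yields merely a logarithmic bound $\mu(B(x,r)) \preceq (\log(1/r))^{-c}$. Working with $\Gamma_t$ lets us take the cut scale $t$ proportional to $r$, which is exactly what collapses the factor $f\bigl(r/(r_{\min}t)\bigr)$ to the fixed constant $q < 1$ and turns the iteration into a genuinely geometric one.
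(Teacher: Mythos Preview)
Your proof is correct. The paper does not actually prove this lemma: as the citation in the statement indicates, it is quoted directly from Feng--Lau, so there is no in-paper argument to compare against. Your approach --- establishing $f(r)\to 0$ from non-atomicity, deriving the self-referential bound $\mu(B(x,r))\le f\bigl(r/(r_{\min}t)\bigr)\,\mu(B(x,r+t))$ via the cut set $\Gamma_t$, and then iterating with $t$ proportional to $r$ --- is a clean, self-contained argument, and your closing remark on why cut sets rather than level-$n$ cylinders are essential in the non-homogeneous case is exactly to the point.
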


	\begin{lemma}
		\label{l:Analytic level sets}
		Let $F\colon [0,1]\to \R$ be an analytic function which is not the constant zero function. Then there exist $C>0$, $k\in\mathbb{N}$ and $x_1,\ldots,x_n\in[0,1]$ such that for all $r$ sufficiently small, 
		\begin{equation}\label{e:analyticinclusion} 
		\{x\in [0,1]:|F(x)|<r\}\subseteq \bigcup_{i=1}^{n} B(x_i,Cr^{1/k}).
		\end{equation}
	\end{lemma}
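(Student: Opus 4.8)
The plan is to prove this by a Łojasiewicz-type inequality argument combined with a compactness/covering reduction to the local structure of analytic functions near their zeros. First I would observe that if $F$ has no zeros on $[0,1]$, then by continuity and compactness $|F|$ is bounded below by a positive constant, so the set on the left-hand side of~\eqref{e:analyticinclusion} is empty for small $r$ and there is nothing to prove. So I would assume $F$ has at least one zero, and since $F$ is analytic and not identically zero on the connected set $[0,1]$, the zero set $Z = \{x \in [0,1] : F(x) = 0\}$ is finite, say $Z = \{x_1, \ldots, x_n\}$ (this uses the identity theorem: zeros of a non-zero analytic function have no accumulation point).

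Next, the key local fact: near each zero $x_i$, analyticity gives a power series expansion $F(x) = \sum_{m \geq k_i} a_m (x - x_i)^m$ with $a_{k_i} \neq 0$, where $k_i \geq 1$ is the order of vanishing. Factoring out, $F(x) = (x - x_i)^{k_i} g_i(x)$ where $g_i$ is analytic near $x_i$ with $g_i(x_i) = a_{k_i} \neq 0$. Hence on a small closed interval $I_i = [x_i - \delta_i, x_i + \delta_i] \cap [0,1]$ we have $|g_i(x)| \geq c_i > 0$, so $|F(x)| \geq c_i |x - x_i|^{k_i}$ on $I_i$. Consequently, for $x \in I_i$, the condition $|F(x)| < r$ forces $|x - x_i| < (r/c_i)^{1/k_i}$. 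I would then let $k = \max_i k_i$ and note that for $r$ small, $(r/c_i)^{1/k_i} \leq C' r^{1/k}$ for a suitable constant $C'$ (since $r^{1/k_i} \leq r^{1/k}$ for $r < 1$ when $k_i \leq k$, up to the constant $c_i^{-1/k_i}$). This handles the part of $\{|F| < r\}$ lying inside $\bigcup_i I_i$.

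Finally, I would deal with the complementary region $[0,1] \setminus \bigcup_i \mathrm{int}(I_i)$, which is compact and contains no zeros of $F$; hence $|F| \geq c_0 > 0$ there, so for $r < c_0$ this region contributes nothing to $\{|F| < r\}$. Combining the two regions, for all sufficiently small $r$ we get $\{x \in [0,1] : |F(x)| < r\} \subseteq \bigcup_{i=1}^n B(x_i, C r^{1/k})$ with $C = C'$, which is exactly~\eqref{e:analyticinclusion}. The main obstacle — really the only point requiring care — is making the constants uniform: one must choose the neighbourhoods $I_i$ of the zeros, extract the lower bounds $|F(x)| \geq c_i|x-x_i|^{k_i}$ on each, and then reconcile the different exponents $k_i$ into the single exponent $k$, tracking how the finitely many constants $c_i, \delta_i, c_0$ combine into one $C$ and one threshold on $r$. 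Since there are only finitely many zeros this is entirely routine, so the lemma follows without serious difficulty.
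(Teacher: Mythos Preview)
Your proposal is correct and follows essentially the same approach as the paper: both arguments reduce to the finiteness of the zero set, the local lower bound $|F(x)| \geq c_i |x - x_i|^{k_i}$ coming from the power series expansion at each zero, taking $k = \max_i k_i$, and a compactness argument away from the zeros. Your write-up is in fact slightly more explicit than the paper's (you spell out the factorisation $F(x) = (x-x_i)^{k_i} g_i(x)$ and the treatment of the complementary region, whereas the paper simply invokes ``a compactness argument'' at the end).
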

\begin{proof}
Let $F\colon [0,1]\to \R$ be an analytic function which is not identically zero. We may assume that there exists $x\in [0,1]$ such that $F(x)=0$ (otherwise our result holds trivially). 
We let $x_1,\ldots,x_n$ denotes the zeros of $F$. Note that $F$ must have finitely many zeros because, as is well known, the zeros of a non-zero analytic function cannot have an accumulation point. 

 We now consider the power series expansion of $F$ around each $x_i$, that is the expansion \[ F(x)=\sum_{k=1}^{\infty}a_{k,i}(x-x_i)^{k}.\] Since $F$ is not the constant zero function, for each $i$ there exists a minimal $k_i\geq 1$ such that $a_{k_i,i}\neq 0$. This in turn implies that for each $x_{i}$, there exists $c_{i}> 0$ such that for all $x$ sufficiently close to $x_i$ we have 
 \[ c_{i}(x-x_{i})^{k_i}\leq |F(x)|.\] 
 Therefore for all $x$ sufficiently close to $x_i$, if $x$ satisfies $|F(x)|<r$ then we must have $x\in B\left(x_i,(r/c_i)^{1/k_i}\right)$. Taking $k=\max\{k_1,\dotsc,k_n\}$ and $C=\max\{c_1^{-1/k_1},\dotsc, c_{n}^{-1/k_n}\}$, it now follows from a compactness argument that for all $r$ sufficiently small the desired inclusion~\eqref{e:analyticinclusion} holds. 
 \end{proof}

We now prove Theorem~\ref{thm:Analytic pushforward thm}.
\begin{proof} 
First note that neither $F'$ nor $F''$ is the constant zero function. Applying Lemma~\ref{l:Analytic level sets} for both of these functions, we see that there exist $C>0$, $k\in \mathbb{N}$, and $x_{1},\ldots,x_n$ such that 
		\begin{equation}
			\label{e:levelsetinclusion}
		\{x\in [0,1]:|F'(x)|<r\}\cup	\{x\in [0,1]:|F''(x)|<r\}\subseteq \bigcup_{i=1}^{n} B(x_i,Cr^{1/k})
		\end{equation}
		 for all $r$ sufficiently small.

		Let 
		\[ \delta=\frac{\eta}{4(k+2)\kappa}, \] 
		where $k$ is as above and $\eta,\kappa>0$ are as in Corollary~\ref{t:self-similar} for the measure $\mu$. Let $\xi\neq 0$ and let 
		\[W(\xi)=\left\{\a=(a_1,\ldots,a_n)\in \A^*:\prod_{i=1}^{n}|r_{a_i}|\leq |\xi|^{-\delta}<\prod_{i=1}^{n-1}|r_{a_i}|\right\}.\]
		Recalling that $e(y)=e^{-2\pi i y}$ and using the self-similarity of $\mu$, we have the following:
		\begin{align}
			\label{e:splitFouriersum}
			\widehat{F\mu}(\xi)&=\int e(\xi F(x))\, d\mu(x)\nonumber\\
			&=\sum_{\a\in W(\xi)}p_{\a}\int e(\xi F(\psi_{\a}(x)))\, d\mu(x)\nonumber\\
			&=\sum_{\stackrel{\a\in W(\xi)}{\psi_{\a}([0,1])\cap \bigcup_{i=1}^{n}(x_i-C|\xi|^{-\delta},x_i+C|\xi|^{-\delta})= \varnothing}}p_{\a}\int e(\xi F(\psi_{\a}(x)))\, d\mu(x)\\
			&\phantom{=}+\sum_{\stackrel{\a\in W(\xi)}{\psi_{\a}([0,1])\cap \bigcup_{i=1}^{n}(x_i-C|\xi|^{-\delta},x_i+C|\xi|^{-\delta})\neq \varnothing}}p_{\a}\int e(\xi F(\psi_{\a}(x)))\, d\mu(x)\nonumber.
		\end{align}
		Observe that
		\[\widehat{(F\circ \psi_{\a})\mu}(\xi)=\int e(\xi F(\psi_{\a}(x)))\, d\mu(x).\]
		Applying the chain rule twice gives  
		\begin{equation}
			\label{e:chainrule}
		(F\circ \psi_{\a})'(x)=F'(\psi_{\a}(x))r_{\a}\quad \textrm{ and }\quad	(F\circ \psi_{\a})''(x)=F''(\psi_{\a}(x))r_{\a}^{2}.
		\end{equation}
		 It now follows from~\eqref{e:levelsetinclusion},~\eqref{e:chainrule} and the definition of $W(\xi)$ that whenever $|\xi|$ is sufficiently large, if $\a\in W(\xi)$ is such that $\psi_{\a}([0,1])\cap \bigcup_{i=1}^{n}(x_i-C|\xi|^{-\delta},x_i+C|\xi|^{-\delta})= \varnothing$, then \[|\xi|^{-\delta(k+2)}\preceq \min_{x\in [0,1]}|(F\circ \psi_{\a})''(x)|\leq \max_{x\in [0,1]}|(F\circ \psi_{\a})''(x)|\preceq 1, \] and 
		\[|\xi|^{-\delta(k+1)}\preceq \max_{x\in [0,1]}|(F\circ \psi_{\a})'(x)|\preceq 1.\] Using these bounds together with Corollary~\ref{t:self-similar}, we have that for $\a$ such that $\psi_{\a}([0,1])\cap \bigcup_{i=1}^{n}(x_i-C|\xi|^{-\delta},x_i+C|\xi|^{-\delta})= \varnothing$, 
		\begin{equation*}
			\left|\int e(\xi F(\psi_{\a}(x)))\, d\mu(x)\right|\preceq|\xi|^{\delta(2k+3)\kappa}|\xi|^{-\eta}\preceq |\xi|^{-\eta/2}
		\end{equation*} 
		(in the final inequality we used the definition of $\delta$). It follows that
		\begin{equation}
			\label{e:bound1}
			\left|\sum_{\stackrel{\a\in W(\xi)}{\psi_{\a}([0,1])\cap \bigcup_{i=1}^{n}(x_i-C|\xi|^{-\delta},x_i+C|\xi|^{-\delta})= \varnothing}}p_{\a}\int e(\xi F(\psi_{\a}(x)))\, d\mu(x)\right|\preceq |\xi|^{-\eta/2}.
			\end{equation}
			
		We bound the remaining term on the right hand side of~\eqref{e:splitFouriersum} as follows: 
		\begin{align}\label{e:bound2}
		&\left|	\sum_{\stackrel{\a\in W(\xi)}{\psi_{\a}([0,1])\cap \bigcup_{i=1}^{n}(x_i-C|\xi|^{-\delta},x_i+C|\xi|^{-\delta})\neq \varnothing}}p_{\a}\int e(\xi F(\psi_{\a}(x)))\, d\mu(x)\right|\nonumber\\
			&\leq \sum_{\stackrel{\a\in W(\xi)}{\psi_{\a}([0,1])\cap \bigcup_{i=1}^{n}(x_i-C|\xi|^{-\delta},x_i+C|\xi|^{-\delta})\neq \varnothing}}p_{\a}\nonumber\\
			&\leq \mu\left(\bigcup_{i=1}^{n}(x_i-(C+1)|\xi|^{-\delta},x_i+(C+1)|\xi|^{-\delta})\right)\nonumber\\
			&\preceq |\xi|^{-\delta s},
		\end{align} 
		where the $s>0$ appearing in the final line is the exponent whose existence is guaranteed by Lemma~\ref{l:FengFrostman}. Substituting~\eqref{e:bound1} and~\eqref{e:bound2} into the right hand side of~\eqref{e:splitFouriersum} implies our result. 
\end{proof}

\subsection{Proof of Theorem \ref{thm:analyticthm}}
    For an IFS of analytic maps $\Phi = \{\varphi_a \colon [0,1] \to [0,1]\}_{a \in \mathcal{A}}$, there is a dichotomy: either there exists an analytic diffeomorphism $F\colon [0,1]\to[0,1]$ and a self-similar IFS $\Psi\coloneqq \{\psi_{a}\}_{a\in \A}$ such that $\Phi=\{F\circ\psi_{a}\circ F^{-1}\}_{\a\in \A}$ (in this case we say that $\Phi$ is conjugate to self-similar), or $\Phi$ admits no such conjugacy. 
    The following lemma is basic and well known (for example it was clearly known to be true by the authors of \cite[Section~6]{AHW}), but we include the details for completeness. 
    \begin{lemma}\label{l:conjpush}
        Let $\Phi = \{\varphi_a \colon [0,1] \to [0,1]\}_{a \in \mathcal{A}}$ be an analytic IFS and assume there is an analytic diffeomorphism $F\colon [0,1]\to[0,1]$ and a self-similar IFS $\Psi\coloneqq \{\psi_{a}\}_{a\in \A}$ such that $\Phi=\{F\circ\psi_{a}\circ F^{-1}\}_{\a\in \A}$. 
        \begin{enumerate}
            \item\label{i:nonaffine} If there exists $a \in \mathcal{A}$ such that $\varphi_a$ is not affine then $F$ is not affine. 
            \item\label{i:pushbyconj} If $\mu$ is a self-conformal measure for $\Phi$ corresponding to the probability vector $\p=(p_{a})_{a\in \A}$, and $\nu$ is the self-similar measure for $\Psi$ also corresponding to $\p$, then $F\nu=\mu$. 
        \end{enumerate}
    \end{lemma}
    \begin{proof}
        \eqref{i:nonaffine}. If $F$ were affine then each $\varphi_a$ would be a composition of affine maps and would therefore be affine. 

        \eqref{i:pushbyconj}. Using the characterisation in~\eqref{e:definebernoullimeas}, for all Borel $A \subset [0,1]$ we have 
	    \begin{align*} 
		F\nu (A) = \nu(F^{-1} A) = \sum_{a \in \A} p_a \nu(\psi_{a}^{-1}\circ F^{-1} (A)) &= \sum_{a \in \A} p_a \nu (F^{-1}\circ\varphi_a^{-1} (A))\\*
		&=\sum_{a \in \A} p_a \varphi_{a}(F\nu) (A). 
		\end{align*} 
        In the third equality we have used that $\psi_{a}^{-1}\circ F^{-1}=F^{-1}\circ \varphi_{a}^{-1}$. This follows from the equation $F\circ \psi_{a}\circ F^{-1}=\varphi_{a}$. The equation above shows that $F\nu$ satisfies~\eqref{e:definebernoullimeas} for $\Phi$ and $\p$. However, $\mu$ is the unique probability measure satisfying~\eqref{e:definebernoullimeas}, so $F\nu=\mu$. 
    \end{proof}

    \begin{proof}[Proof of Theorem~\ref{thm:analyticthm}]
        If the IFS is not conjugate to self-similar, then the result follows from both \cite[Theorem~1.1]{AHW3} and \cite[Theorem~1.4]{BS}. We refer the reader to \cite[Section~6]{AHW3} and the discussion before Theorem~1.1 in \cite{BS} for a detailed explanation as to why the assumptions of these theorems are satisfied when there is no conjugacy. 

        If, on the other hand, the IFS is conjugate to self-similar, then by Lemma~\ref{l:conjpush} one can write $\mu = F\nu$ for some self-similar measure $\nu$ on $[0,1]$ and some non-affine analytic map $F \colon [0,1] \to [0,1]$. The result therefore follows from Theorem~\ref{thm:Analytic pushforward thm} in this case. 
    \end{proof}

	\section{Proof of Theorem \ref{t:main}}\label{s:mainthm}
	
		\subsection{Outline}\label{s:outline}
		We briefly outline the proof of Theorem~\ref{t:main}. We do so in the setting of Corollary~\ref{t:self-similar} (rather than Theorem~\ref{t:main}) for notational simplicity, and because most of the ideas in the proof of the general case are needed even in this simpler setting. 
		The first step in our proof will be to disintegrate our measure as 
	 \[\mu=\int_{\Omega} \mu_{\omega}\, dP(\omega).\] 
	Each $\mu_{\omega}$ will be a probability measure on $\mathbb{R}$. 
	The set $\Omega$ will be a suitable space of infinite sequences $([\a_i])_{i=1}^{\infty}$, where each $[\a_i]$ encodes a choice of homogeneous self-similar IFS. For notational simplicity we will denote a typical element in $\Omega$ by $\omega$, and $P$ will be a natural probability measure on $\Omega$. Crucially, a typical $\mu_{\omega}$ will resemble a self-similar measure for a well separated, homogeneous self-similar IFS. Moreover, each $\mu_{\omega}$ will have an infinite convolution structure. It is these properties which make $\mu_{\omega}$ easier to analyse than~$\mu$. 
	We expect that this method of disintegrating a stationary measure generated by an arbitrary countable IFS will be of independent interest. This idea of disintegrating a stationary measure in terms of a family of simpler measures was introduced by Galicer, Saglietti, Shmerkin and Yavicoli~\cite{GSSY}. In this paper they showed that an arbitrary self-similar measure can be expressed as an integral of random measures where each measure in the integral can be described as an infinite convolution of Dirac masses. This technique has also been applied in \cite{SSS,KaeOrp,SolSpi}. 
	Algom, the first named author, and Shmerkin~\cite{ABS} showed that one can express an arbitrary self-similar measure in terms of an integral of random measures where each measure resembles a self-similar measure for a well separated IFS. 

	Perhaps the most challenging step in our proof is establishing Proposition~\ref{p:decayoutsidesparse}, which asserts that for a set of $\omega$ with large $P$-measure, the Fourier transform of $\mu_{\omega}$ decays outside of a sparse set of frequencies. 
		We use some ideas from Kaufman~\cite{Kau}, and Mosquera and Shmerkin \cite{MS}, but the presence of infinitely many maps and maps with different contraction ratios causes substantial additional difficulties. 
		Indeed, we make extensive use of tools from large deviations theory, namely Hoeffding's inequality and Cram\'er's theorem, to prove that for a set of $([\a_i])_{i=1}^{\infty}$ with large $P$-measure, most elements in the sequence $([\a_i])_{i=1}^{\infty}$ will correspond to a self-similar IFS that is well-separated and contains many similarities. 
		Using the assumption~\eqref{e:largedevassump}, we can prove that with large probability the average of the contraction ratios corresponding to the entries in $([\a_i])_{i=1}^{\infty}$ will not be too small, and that most individual elements in $([\a_i])_{i=1}^{\infty}$ will not have a corresponding contraction ratio that is very small. 
		Since $\mu_{\omega}$ has an infinite convolution structure, $\widehat{\mu_\omega}$ has an infinite product structure. Under the above assumptions on $\omega$, for a large set of frequencies, a significant proportion of the important terms in this infinite product will have absolute value strictly less than $\upsilon$ for some uniform $\upsilon\in(0,1)$. 
		This is because each term in this product is the average of points on the unit circle in the complex plane, and a non-trivial combinatorial argument bounds the number of strings of indices of $([\a_i])$ such that for each index in such a string, all corresponding points on the circle are very close together (which is the only way that it is possible for there not to be decay). 
		This combinatorial argument is sometimes known as an \emph{Erd\H{o}s--Kahane argument}, after~\cite{Erdos2,Kah}. 
			
	The next step is to use Proposition~\ref{p:decayoutsidesparse} to prove Proposition~\ref{p:fmuomegadecay}, which asserts that the magnitude of the Fourier transform of $|\widehat{F \mu_{\omega}}|$ can be bounded above by some power of the frequency multiplied by some constant depending upon the first and second derivative of $F$. 
	To do so, after again using the convolution structure of $\mu_{\omega}$ and Taylor expanding $F$, we need to estimate the $\mu_{\omega}$-measure of a set of points whose image under $F'$, once rescaled, lies in the exceptional set of frequencies from Proposition~\ref{p:decayoutsidesparse}. 
	This is done by using large deviations theory to prove that with large $P$-measure, $([\a_i])_{i=1}^{\infty}$ will contain lots of indices corresponding to well-separated maps with large contraction ratios, so $\mu_{\omega}$ will have a uniform positive Frostman exponent. 
	
	We conclude the proof by observing that there exist $\eta,\delta>0$ such that for all $\xi\neq 0$ we can find a set $\Good\subset \Omega$ (given by Propositions~\ref{p:decayoutsidesparse} and~\ref{p:fmuomegadecay}) such that $P(\Good^{c}) \preceq \xi^{-\delta}$ and such that for all $\omega\in \Good$ we have $|\widehat{F\mu_{\omega}}(\xi)| \preceq |\xi|^{-\eta}$. 
	Then 
	\[|\widehat{F\mu}(\xi)|\leq \int_{\Good}|\widehat{F\mu_{\omega}}(\xi)|\, dP(\omega)+P(\Good^c)\preceq |\xi|^{-\delta}+|\xi|^{-\eta},\] which will complete our proof. 

	\subsection{Disintegrating the measure} 	
	For the rest of this section we fix a CIFS $\Phi=\{\varphi_{j,l}\}_{j\in J,l\in L_j}$ and a probability vector $\p=(p_{j,l})_{j\in J,l\in L_j}$ satisfying the hypotheses of Theorem~\ref{t:main}. To simplify our notation, we let $\A=\{(j,l)\}_{j\in J,l\in L_j}$. 
   Since the fibre IFS $\Psi_{j^*}$ is non-trivial, let the maps $\gamma_{l,j^*}$ and $\gamma_{l',j^*}$ have distinct fixed points, and consider the compositions 
     \begin{equation*}
        \overbrace{\gamma_{l,j^*}\circ \cdots \circ \gamma_{l,j^*}}^{\times n}\circ \overbrace{\gamma_{l',j^*}\circ \cdots \circ \gamma_{l',j^*}}^{\times n} \quad \mbox{ and } \quad \overbrace{\gamma_{l',j^*}\circ \cdots \circ \gamma_{l',j^*}}^{\times n}\circ \overbrace{\gamma_{l,j^*}\circ \cdots \circ \gamma_{l,j^*}}^{\times n}
    \end{equation*}
    for $n \in\mathbb{N}$. These two compositions have the same contraction ratio and associated probability vector, and when $n$ is sufficiently large, the set $[0,1]$ has disjoint images under these two compositions. 
    Note that for all $n' \in \N$, the set of $n'$-fold compositions of the maps in the CIFS gives another CIFS. Moreover, the stationary measure $\mu$ can be realised as a stationary measure for this new CIFS with respect to a different probability vector. 
    Therefore we henceforth assume without loss of generality that there exist two maps $\gamma_{l_1,j^*},\gamma_{l_2,j^*}$ in the original fibre IFS $\Psi_{j^*}$ such that 
	\begin{equation}\label{e:separated} 
		\gamma_{l_1,j^*}([0,1])\cap \gamma_{l_2,j^*}([0,1])=\varnothing.
	\end{equation}
	We may also assume that the contraction ratios of $\gamma_{l_1,j^*}$ and $\gamma_{l_2,j^*}$ are equal (denote the common value by $r_{*}$), and for the underlying probability vectors we have $p_{l_1,j^*}=p_{l_2,j^*}$ (denote the common value by $p_{*}$). 
    It follows trivially from~\eqref{e:separated} that there exists $c>0$ such if $\gamma_{l_1,j^*}(x)=r_{*} x + t_{l_1,j^{*}}$ and $\gamma_{l_2,j^*}(x)=r_{*} x + t_{l_2,j^{*}}$ then 
	\begin{equation}\label{e:csep}
	|t_{l_2,j^{*}}-t_{l_1,j^{*}}|\geq c.
	\end{equation}
	
	Our disintegration is defined using words of length $k$, i.e. $\A^{k}$. Here, $k\in \mathbb{N}$ is some fixed parameter that we will eventually take to be sufficiently large in our proof of Theorem~\ref{t:main}. Given $\a\in \A^{*}$, we let $\psi_{\a}\colon [0,1]^{d}\to[0,1]^{d}$ and $\gamma_{\a} \colon [0,1]\to [0,1]$ be the maps defined implicitly via the equation \[\varphi_{\a}(x_1,\ldots,x_{d+1})=(\psi_{\a}(x_1,\ldots,x_d),\gamma_{\a}(x_{d+1})).\] It follows from the fact that our CIFS is a fibre product IFS that for every $\a\in \A^*$, the map $\psi_{\a}$ only depends upon the $j$-component of each entry in $\a$. We emphasise that the map $\gamma_{\a}$ is always an affine contraction. 
	
	Given $\a=(a_1,\dotsc,a_k)$, $\b=(b_1,\dotsc,b_k)\in \A^k$, we write $\a\sim \b$ if for all $i$ such that $a_i\in \{(j_{*},l_1),(j_{*},l_2)\}$ we have $b_i\in \{(j_{*},l_1),(j_{*},l_2)\}$, and for all $i$ such that $a_i\notin \{(j_{*},l_1),(j_{*},l_2)\}$ we have $a_i=b_i$. 
	Clearly $\sim$ defines an equivalence relation on $\A^k$. For each $\a\in \A^k$ we let $[\a]$ denote the equivalence class of $\a$. The following lemma records some useful properties of this equivalence relation.
	\begin{lemma}
		\label{l:basic properties}
		The equivalence relation $\sim$ on $\A^{k}$ satisfies the following properties:
		\begin{enumerate}
			\item[1.] If $\a\sim \b$ and $\a\neq \b$, then $\gamma_{\a}([0,1])\cap \gamma_{\b}([0,1])=\varnothing$.%
			\item[2.] If $\a \sim \b$ then $\psi_{\a}=\psi_{\b}$.
			\item[3.] If $\a\sim \b$ then $\gamma_{\a}'=\gamma_{\b}'$ and $p_{\a} = p_{\b}$. 
			\item[4.] For all $\a\in \A^{k}$ we have $\# [\a]=2^{\#\{1\leq i\leq k:a_i\in \{(j_{*},l_1),(j_{*},l_2)\}\}}$.
		\end{enumerate}
	\end{lemma}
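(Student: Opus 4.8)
\textbf{Proof proposal for Lemma~\ref{l:basic properties}.}

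The plan is to verify each of the four properties by unwinding the definition of the equivalence relation $\sim$ together with the fibre product structure of $\Phi$. Throughout, fix $\a = (a_1,\dotsc,a_k)$ and $\b = (b_1,\dotsc,b_k)$ in $\A^k$ with $\a \sim \b$, and recall that $\gamma_{\a} = \gamma_{a_1} \circ \cdots \circ \gamma_{a_k}$ is an affine contraction $x \mapsto r_{\a} x + s_{\a}$ for some $r_{\a} \neq 0$ and $s_{\a} \in \R$.

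For property~3 I would argue that $r_{\a}$ depends only on the equivalence class $[\a]$: since $\gamma_{l_1,j^*}$ and $\gamma_{l_2,j^*}$ were arranged to have the common contraction ratio $r_*$, the factor $r_{a_i}$ contributed by each coordinate depends only on whether $a_i \in \{(j_*,l_1),(j_*,l_2)\}$ (in which case the factor is $r_*$) or $a_i \notin \{(j_*,l_1),(j_*,l_2)\}$ (in which case $a_i = b_i$ forces the same factor). Multiplying over $i = 1,\dotsc,k$ gives $r_{\a} = r_{\b}$, hence $\gamma_{\a}' = \gamma_{\b}'$. The identity $p_{\a} = p_{\b}$ is identical, using $p_{l_1,j^*} = p_{l_2,j^*} = p_*$ in place of the contraction ratios. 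For property~2, I would observe that by the defining relation of a fibre product IFS, $\psi_{\a} = \psi_{j(a_1)} \circ \cdots \circ \psi_{j(a_k)}$ where $j(a_i)$ denotes the base-coordinate of $a_i$; since $(j_*,l_1)$ and $(j_*,l_2)$ share the same base-coordinate $j_*$, and otherwise $\a \sim \b$ forces $a_i = b_i$, we get $j(a_i) = j(b_i)$ for every $i$, so $\psi_{\a} = \psi_{\b}$. For property~4, note that replacing $\a$ by an equivalent word amounts to choosing, independently at each coordinate $i$ with $a_i \in \{(j_*,l_1),(j_*,l_2)\}$, one of the two symbols $(j_*,l_1)$ or $(j_*,l_2)$, while all other coordinates are frozen; this gives exactly $2^{\#\{1 \le i \le k : a_i \in \{(j_*,l_1),(j_*,l_2)\}\}}$ elements in $[\a]$.

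The main work — and the step I expect to be the only genuinely nontrivial one — is property~1: if $\a \sim \b$ with $\a \neq \b$, then $\gamma_{\a}([0,1]) \cap \gamma_{\b}([0,1]) = \varnothing$. Since $\a \neq \b$ but $\a \sim \b$, there is at least one coordinate $i$ where $a_i, b_i \in \{(j_*,l_1),(j_*,l_2)\}$ and $a_i \neq b_i$; let $i_0$ be the first such coordinate. Write $\a = \mathbf{u}\, a_{i_0}\, \mathbf{v}$ and $\b = \mathbf{u}\, b_{i_0}\, \mathbf{w}$ where $\mathbf{u} = (a_1,\dotsc,a_{i_0-1}) = (b_1,\dotsc,b_{i_0-1})$ (equal by minimality of $i_0$ together with the definition of $\sim$ on the non-special coordinates), and $\mathbf{v},\mathbf{w} \in \A^{k-i_0}$. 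Then $\gamma_{\a} = \gamma_{\mathbf{u}} \circ \gamma_{a_{i_0}} \circ \gamma_{\mathbf{v}}$ and $\gamma_{\b} = \gamma_{\mathbf{u}} \circ \gamma_{b_{i_0}} \circ \gamma_{\mathbf{w}}$. Since $\gamma_{\mathbf{u}}$ is an injective affine map, it suffices to show $(\gamma_{a_{i_0}} \circ \gamma_{\mathbf{v}})([0,1]) \cap (\gamma_{b_{i_0}} \circ \gamma_{\mathbf{w}})([0,1]) = \varnothing$. Now $\gamma_{\mathbf{v}}([0,1]) \subseteq [0,1]$ and $\gamma_{\mathbf{w}}([0,1]) \subseteq [0,1]$, so it is enough to show $\gamma_{a_{i_0}}([0,1]) \cap \gamma_{b_{i_0}}([0,1]) = \varnothing$; but $\{a_{i_0}, b_{i_0}\} = \{(j_*,l_1),(j_*,l_2)\}$, so this is precisely~\eqref{e:separated}. (One may alternatively run the quantitative version: the contraction ratio of $\gamma_{\mathbf u}$ times the gap $c$ from~\eqref{e:csep} gives an explicit separation between the two images, which is strictly positive.) This completes the proof of all four properties.
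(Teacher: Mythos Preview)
Your proof is correct and is precisely the sort of verification the paper has in mind: the paper's own proof consists of the single sentence ``This lemma follows immediately from the definition of our equivalence relation and the properties of $\gamma_{l_1,j^*}$ and $\gamma_{l_2,j^*}$ stated above,'' and you have simply written out those immediate details, including the natural first-difference argument for property~1 using~\eqref{e:separated}.
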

	\begin{proof}
		This lemma follows immediately from the definition of our equivalence relation and the properties of $\gamma_{l_1,j^*}$ and $\gamma_{l_2,j^*}$ stated above.
	\end{proof}
	It is useful to think of an equivalence class $[\a]$ as an IFS in its own right. Given an equivalence class $[\a]$, Lemma~\ref{l:basic properties}.3. states that each element in $[\a]$ has the same contraction ratio. We denote this common contraction ratio by $r_{[\a]}$. Given $\a$ we let $\{t_{\b,[\a]}\}_{\b\in [\a]}$ be the set of real numbers such that if we write $\gamma_{\b,[\a]}(x)=r_{[\a]}x+t_{\b,[\a]}$ for each $\b \in [\a]$, then 
	\[ \{ \varphi_{\b} \}_{\b \in [\a]}= \{(\psi_{\a},\gamma_{\b,[\a]})\}_{\b\in [\a]}.\]
	We let $I=\{[\a] : \a \in \A^k \}$ and $\Omega=I^{\mathbb{N}}$. 
	It will sometimes be convenient to denote elements of $\Omega$ by $\omega = ([\a_i])_{i=1}^{\infty}$ for some arbitrary choice of representatives $\a_i$. Given such an $\omega$, we let $(\j_{i,\omega})\in (J^{k})^{\N}$ be the unique sequence such that if $\b\in [\a_i] $ then $\psi_{\b}=\psi_{\j_{i,\omega}}$ for all $i$. The existence of $(\j_{i,\omega})$ follows from Lemma~\ref{l:basic properties}.2. Recall that $J$ is the index set for the base CIFS (see Definition \ref{Def:Fibre product}). Given $\omega\in \Omega$ we also let 
    \begin{equation}
        \label{e:x_omega def}
        \x_{\omega}\coloneqq \lim_{n\to\infty}(\psi_{\j_{1,\omega}}\circ \cdots \circ \psi_{\j_{n,\omega}})(0).
    \end{equation}
    
	We let $\sigma\colon \Omega\to \Omega$ be the usual left shift map, i.e. $\sigma (\omega ) = ([\a_2],[\a_3],\ldots)$. 
	We define the probability vector $\mathbf{q}$ on $I$ according to the rule 
	\[q_{[\a]}=\sum_{\b \in [\a]}p_{\b} = p_{\a} \cdot 2^{\#\{1\leq i\leq k:a_i\in \{(j_{*},l_1),(j_{*},l_2)\}\}}.\] 
	The second equality holds because of properties $3.$ and $4.$ stated in Lemma~\ref{l:basic properties}. We let $P$ denote the corresponding infinite product measure (Bernoulli measure) on $\Omega$, i.e. the unique measure which satisfies 
	\[ P(\{ ([\a_i])_{i=1}^{\infty} \in \Omega : [\a_1] = [\b_1], \dotsc, [\a_n] = [\b_n]  \}) = \prod_{i=1}^n {q_{[\b_i]}} \]
	for all $n \in \N$ and $[\b_1],\dotsc,[\b_n] \in I$. 
	Moreover, for each $[\a] \in I$ we define the uniform probability vector $\hat{\textbf{q}}^{[\a]}=(\hat{q}_{\b}^{[\a]})_{\b\in [\a]}$ on $[\a]$, where \[ \hat{q}_{\b}^{[\a]}=\frac{1}{\#[\a]}=\frac{1}{2^{\#\{1\leq i\leq k:a_i\in \{(j_{*},l_1),(j_{*},l_2)\}\}}} \]
	for all $\b \in [\a]$. 
	It follows from Lemma~\ref{l:basic properties}.4. that $\hat{\textbf{q}}^{[\a]}$ is a probability vector. 
	
	We now fix $\omega=([\a_i])_{i=1}^{\infty}\in \Omega$. 
	Let $\Sigma_{\omega}\coloneqq \prod_{i=1}^{\infty} [\a_i]$ and let $m_{\omega} \coloneqq \prod_{i=1}^{\infty}\hat{\textbf{q}}^{[\a_i]}$ be the infinite product measure supported on $\Sigma_{\omega}$. 
	We let $\pi_{\omega} \colon \Sigma_{\omega} \to\mathbb{R}^{d}$ be given by 
	\[ \pi_{\omega}((\b_1,\b_2,\ldots )) \coloneqq \lim_{n\to\infty}(\varphi_{\b_1}\circ \cdots\circ \varphi_{\b_n})(0)=\left(\x_{\omega},\sum_{m=1}^{\infty}t_{\b_m,[\a_m]}\prod_{i=1}^{m-1}r_{[\a_i]} \right)\]
	with the convention that the empty product is~1.  
	The measure which will appear in the disintegration is~$\tilde{\mu}_{\omega} \coloneqq \pi_{\omega} m_{\omega}$. 
	Note that $\tilde{\mu}_{\omega}$ is the law of the random variable $\pi_{\omega}((X_1,X_2,\ldots))$, where each $X_i$ is chosen uniformly at random from $[\a_i]$ according to $\hat{\textbf{q}}^{[\a_i]}$. 
	Note that \[\tilde{\mu}_{\omega}=\delta_{\x_{\omega}}\times 	\mu_{\omega}\] where $\delta_{\x_{\omega}}$ is the Dirac mass as $\x_{\omega}$ and $\mu_{\omega}$ is the probability measure supported in $[0,1]$ given by the infinite convolution: 
	\begin{equation}\label{e:convstructure} 
		\mu_{\omega} = \ast_{i=1}^{\infty}\frac{1}{\# [\a_i]}\sum_{\b\in [\a_i]}\delta_{t_{\b,[\a_i]}\cdot \prod_{j=1}^{i-1}r_{[\a_j]}}. 
	\end{equation}
	The following proposition shows that we can reinterpret our stationary measure $\mu$ in terms of the measures $\tilde{\mu}_{\omega}$. To streamline the proof of this proposition we let $S_{\lambda} \colon \R \to \R$ be given by $S_{\lambda}(x) = \lambda x$ for each $\lambda\in \R. $
	\begin{prop}
		\label{p:disintegration prop}
		The following disintegration holds: 
		\[\mu=\int_{\Omega} \tilde{\mu}_{\omega}\, dP(\omega).\]
	\end{prop}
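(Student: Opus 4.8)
The plan is to show that $\nu\coloneqq\int_{\Omega}\tilde{\mu}_{\omega}\,dP(\omega)$ equals $\mu$ by checking that $\nu$ satisfies the $k$-fold iterate of the stationarity equation and then invoking uniqueness of the stationary measure of a CIFS. Here $\nu$ denotes the Borel probability measure $A\mapsto\int_{\Omega}\tilde{\mu}_{\omega}(A)\,dP(\omega)$; this is well defined because $\omega\mapsto\tilde{\mu}_{\omega}(A)$ is $P$-measurable for every Borel $A$, which follows from the measurable dependence of $(\pi_{\omega},m_{\omega})$ on $\omega$ (the limit defining $\pi_{\omega}$ converges uniformly by the uniform contraction bound for the CIFS), and $\nu$ is a probability measure as an average of probability measures over $(\Omega,P)$.

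The main step is a one-coordinate ``peeling'' identity for $\tilde{\mu}_{\omega}$. Fix $\omega=([\a_i])_{i\geq 1}\in\Omega$ and split off its first coordinate. Directly from the definitions, $\Sigma_{\omega}=[\a_1]\times\Sigma_{\sigma\omega}$, $m_{\omega}=\hat{\mathbf{q}}^{[\a_1]}\times m_{\sigma\omega}$, and for $(\b_1,\b_2,\ldots)\in\Sigma_{\omega}$ the limit defining $\pi_{\omega}$ together with continuity of $\varphi_{\b_1}$ gives $\pi_{\omega}((\b_1,\b_2,\ldots))=\varphi_{\b_1}(\pi_{\sigma\omega}((\b_2,\b_3,\ldots)))$. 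Pushing $m_{\omega}$ forward through $\pi_{\omega}$ and recalling $\hat{q}_{\b}^{[\a_1]}=1/\#[\a_1]$, this yields $\tilde{\mu}_{\omega}=\frac{1}{\#[\a_1]}\sum_{\b\in[\a_1]}\varphi_{\b}\tilde{\mu}_{\sigma\omega}$. Now integrate over $\omega$. Since $P$ is the Bernoulli measure on $I^{\N}$ with marginal $\mathbf{q}$, under the identification $\Omega\cong I\times\Omega$, $\omega\mapsto([\a_1],\sigma\omega)$, we have $P=\mathbf{q}\otimes P$ (in particular $\sigma$ preserves $P$); moving each continuous map $\varphi_{\b}$ through the integral over the remaining coordinates by Fubini gives
\[
\nu=\int_{\Omega}\frac{1}{\#[\a_1]}\sum_{\b\in[\a_1]}\varphi_{\b}\tilde{\mu}_{\sigma\omega}\,dP(\omega)=\sum_{[\a_1]\in I}\sum_{\b\in[\a_1]}\frac{q_{[\a_1]}}{\#[\a_1]}\,\varphi_{\b}\nu.
\]

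To conclude, simplify the weights: $q_{[\a]}=p_{\a}\cdot\#[\a]$ by the definition of $\mathbf{q}$ and Lemma~\ref{l:basic properties}.4, while $p_{\b}=p_{\a}$ for all $\b\in[\a]$ by Lemma~\ref{l:basic properties}.3, so $q_{[\a_1]}/\#[\a_1]=p_{\b}$ for every $\b\in[\a_1]$. As the equivalence classes partition $\A^{k}$, the double sum collapses to $\nu=\sum_{\b\in\A^{k}}p_{\b}\,\varphi_{\b}\nu$. On the other hand, iterating the defining relation $\mu=\sum_{a\in\A}p_{a}\varphi_{a}\mu$ exactly $k$ times shows $\mu=\sum_{\b\in\A^{k}}p_{\b}\varphi_{\b}\mu$. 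Since $\{\varphi_{\b}\}_{\b\in\A^{k}}$ together with $(p_{\b})_{\b\in\A^{k}}$ is a CIFS with a probability vector, its stationary measure is unique, and $\mu$ and $\nu$ are both Borel probability measures satisfying its defining equation; hence $\nu=\mu$, as required.

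There is no deep obstacle here — the statement is essentially careful bookkeeping — but the points that need attention are the measurability making $\nu$ a genuine measure, the justification of the peeling identity and of moving $\varphi_{\b_1}$ through the integral over the remaining coordinates (both only using uniform contraction and continuity), and recording that the $k$-fold composition CIFS has a unique stationary measure. An alternative, more conceptual route is to write $\mu=\pi m$ for the coding map $\pi\colon\A^{\N}\to[0,1]^{d+1}$ and the $\p$-Bernoulli measure $m$, group the coordinates of $\A^{\N}$ into consecutive length-$k$ blocks (so that $\A^{\N}\cong(\A^{k})^{\N}$ and $m$ becomes the $(p_{\a})_{\a\in\A^{k}}$-Bernoulli measure), and disintegrate $m$ over the quotient map $(\A^{k})^{\N}\to\Omega$ sending each block to its $\sim$-class: the image of $m$ is $P$ because $q_{[\a]}=\sum_{\b\in[\a]}p_{\b}$, the fibre measures are exactly the $m_{\omega}$ because $p$ is constant on each class, and $\pi$ restricted to a fibre agrees with $\pi_{\omega}$; this simultaneously motivates the definitions of $P$, $\mathbf{q}$ and $\hat{\mathbf{q}}^{[\a]}$.
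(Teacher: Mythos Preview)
Your proof is correct and follows essentially the same strategy as the paper's: define $\nu=\int_{\Omega}\tilde{\mu}_{\omega}\,dP(\omega)$, peel off the first coordinate of $\omega$ to show $\nu=\sum_{\b\in\A^{k}}p_{\b}\varphi_{\b}\nu$, and invoke uniqueness of the stationary measure for the $k$-fold CIFS. The only cosmetic difference is that the paper carries out the peeling by unpacking the product structure $\tilde{\mu}_{\omega}=\delta_{\x_{\omega}}\times\mu_{\omega}$ and the infinite convolution form of $\mu_{\omega}$ explicitly, whereas you obtain the identity $\tilde{\mu}_{\omega}=\frac{1}{\#[\a_1]}\sum_{\b\in[\a_1]}\varphi_{\b}\tilde{\mu}_{\sigma\omega}$ directly from the coding map relation $\pi_{\omega}((\b_1,\b_2,\ldots))=\varphi_{\b_1}(\pi_{\sigma\omega}((\b_2,\ldots)))$; these are the same computation.
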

	\begin{proof}	
		The proof is similar to that of \cite[Theorem~1.2]{ABS}, but we give the details for completeness. We will show that the probability measure $\nu \coloneqq \int_{\Omega} \tilde{\mu}_{\omega}\, dP(\omega)$ satisfies the equation 
		\begin{equation}\label{e:nuselfsim}
		\nu = \sum_{\a \in \A^k} p_{\a} \cdot \varphi_{\a} \nu.
		\end{equation}
		Crucially, the stationary measure $\mu$ is the unique probability measure satisfying this equation, so once we have established~\eqref{e:nuselfsim} we can deduce that $\mu=\nu$, completing the proof. 
		
		We observe
		\begin{align*}
			&\int_{\Omega} \tilde{\mu}_{\omega}\, dP(\omega)\\
			 &= \int_{\Omega}\delta_{\x_{\omega}}\times \mu_{\omega}\, dP(\omega)\\
			&=\int_{\Omega}\delta_{\x_{\omega}}\times\left(\ast_{m=1}^{\infty}\frac{1}{\# [\a_m]}\sum_{\b\in [\a_m]}\delta_{t_{\b,[\a_m]}\cdot \prod_{i=1}^{m-1}r_{[\a_i]}}\right)\, dP(\omega)\\
			&=\int_{\Omega}\delta_{\x_{\omega}}\times\left(\frac{1}{\#[\a_1]}\sum_{\b\in [\a_1]}\delta_{t_{\b,[\a_1]}}\ast \left(\ast_{m=2}^{\infty}\frac{1}{\# [\a_m]}\sum_{\b\in [\a_m]}\delta_{t_{\b_m,[\a_m]}\cdot \prod_{i=1}^{m-1}r_{[\a_i]}}\right)\right)\, dP(\omega)\\
			&=\int_{\Omega}\delta_{\x_{\omega}}\times\left(\frac{1}{\#[\a_1]}\sum_{\b\in [\a_1]}\delta_{t_{\b,[\a_1]}}\ast S_{r_{[\a_1]}}\mu_{\sigma(\omega)}\right)\, dP(\omega)\\
			&=\int_{\Omega}\delta_{\x_{\omega}}\times\left(\frac{1}{\#[\a_1]}\sum_{\b\in [\a_1]}\gamma_{\b}\mu_{\sigma(\omega)}\right)\, dP(\omega)\\
			&=\int_{\Omega}\frac{1}{\#[\a_1]}\sum_{\b\in [\a_1]}\delta_{\x_{\omega}}\times\gamma_{\b}\mu_{\sigma(\omega)}\, dP(\omega)\\
			&=\int_{\Omega}\frac{1}{\#[\a_1]}\sum_{\b\in [\a_1]}\varphi_{\b}(\delta_{\x_{\sigma(\omega)}}\times\mu_{\sigma(\omega)})\, dP(\omega)\\
			&=\sum_{[\a]\in I}\int_{[\a]\times \Omega}\frac{1}{\#[\a]}\sum_{\b\in [\a]}\varphi_{\b}(\delta_{\x_{\sigma(\omega)}}\times\mu_{\sigma(\omega)})\, dP(\omega)\\
			&=\sum_{[\a]\in I}\sum_{\b\in [\a]}\frac{1}{\#[\a]}\int_{[\a]\times \Omega}\varphi_{\b}(\delta_{\x_{\sigma(\omega)}}\times\mu_{\sigma(\omega)})\, dP(\omega)\\
			&=\sum_{[\a]\in I}\sum_{\b\in [\a]}\frac{q_{[\a]}}{\#[\a]}\int_{ \Omega}\varphi_{\b}(\delta_{\x_{\omega}}\times\mu_{\omega})\, dP(\omega)\\
			&=\sum_{[\a]\in I}\sum_{\b\in [\a]}p_{\b}\int_{ \Omega}\varphi_{\b}(\delta_{\x_{\omega}}\times\mu_{\omega})\, dP(\omega)\\
			&=\sum_{\b\in \A^k}p_{\b}\int_{ \Omega}\varphi_{\b}(\delta_{\x_{\omega}}\times\mu_{\omega})\, dP(\omega).
		\end{align*}
		In the penultimate line we used that $\frac{q_{[\a]}}{\#[\a]}=p_{\b}$ for all $\b\in [\a]$. Summarising the above, we have shown that the measure $\nu \coloneqq \int_{\Omega} \tilde{\mu}_{\omega}\, dP(\omega)$ satisfies~\eqref{e:nuselfsim}, which by our earlier remarks completes the proof. 
	\end{proof}
	
	Now suppose $\mu$ is the stationary measure for a CIFS on $\mathbb{R}$ with probability vector $\mathbf{p}$. By iterating, we may assume without loss of generality that our CIFS has two well-separated maps $\varphi_1$ and $\varphi_2$ with equal contraction ratios and probabilities. 
	As above, we can define an equivalence relation on words of length $k$ on the alphabet of our CIFS, and a probability measure $P$ on infinite sequences of equivalence classes. 
	The probability measures $\mu_{\omega}$ can then be defined as in~\eqref{e:convstructure}. 
	\begin{cor}\label{c:disintegrateinreals}
	If $\mu$ is the stationary measure for a CIFS $\Phi$ on $\R$ with probability vector $\p$, and $\mu$ is not supported on a singleton, then  
	\[ \mu = \int_{\Omega} \mu_{\omega} \, dP(\omega). \]
	\end{cor}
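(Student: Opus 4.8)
The plan is to obtain Corollary~\ref{c:disintegrateinreals} as a one-dimensional specialisation of Proposition~\ref{p:disintegration prop}. Write $\Phi = \{\varphi_a\}_{a\in\A}$ for the given CIFS on $\R$, which (as throughout this section) consists of similarities, so that the infinite convolution~\eqref{e:convstructure} is meaningful; following the paragraph preceding the statement I would first pass to a suitable iterate of $\Phi$ so that two of its maps have disjoint images, equal contraction ratios, and equal probabilities, which is possible precisely because $\mu$ is not a point mass. There are then two essentially equivalent routes. The first is to embed $\Phi$ as a fibre CIFS over a trivial base, exactly as in the proof of Theorem~\ref{t:self-similar}: take $\Psi = \{\psi_0\}$ with $\psi_0(x) = x/2$ as base IFS, let $\Phi$ be the single fibre CIFS, and form the fibre product IFS $\tilde\Phi$ on $\R^2$, whose stationary measure $\nu$ for the induced probability vector satisfies $\nu = \delta_0\times\mu$. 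The second, which avoids any notational matching, is to repeat the computation in the proof of Proposition~\ref{p:disintegration prop} line by line, deleting the factor $\delta_{\x_\omega}\times$ throughout and replacing $\varphi_\b$ by $\gamma_\b = \varphi_\b$; that computation uses nothing beyond the self-similarity relation $\mu = \sum_{\a\in\A^k} p_\a\varphi_\a\mu$ together with the convolution bookkeeping — in particular no separation or summability hypothesis — so it goes through verbatim.

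Taking the first route, I would apply Proposition~\ref{p:disintegration prop} to $\tilde\Phi$, obtaining $\nu = \int_\Omega\tilde\mu_\omega\,dP(\omega)$ with $\tilde\mu_\omega = \delta_{\x_\omega}\times\mu_\omega$. The crucial simplification is that the base consists of the single contraction $\psi_0$, whose fixed point is $0$, so for every $\omega$ the sequence $(\j_{i,\omega})$ is constant and $\x_\omega = \lim_{n\to\infty}\psi_0^n(0) = 0$; hence $\tilde\mu_\omega = \delta_0\times\mu_\omega$ for all $\omega$. Moreover every ingredient of the construction of $\Omega$, $P$, and the $\mu_\omega$ of~\eqref{e:convstructure} — the equivalence relation on $\A^k$, the vector $\mathbf q$ on $I$, and the atom locations $t_{\b,[\a_i]}\prod_j r_{[\a_j]}$ — depends only on the affine fibre data, i.e.\ only on $\Phi$; so these objects are literally the ones attached to $\Phi$ in the paragraph preceding the statement, and there is no clash of notation.

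Finally I would strip off the deterministic first coordinate: using that $\tilde\mu_\omega = \delta_0\times\mu_\omega$ and that $\omega\mapsto\mu_\omega$ is measurable,
\[ \delta_0\times\mu \;=\; \nu \;=\; \int_\Omega \delta_0\times\mu_\omega\,dP(\omega) \;=\; \delta_0\times\int_\Omega\mu_\omega\,dP(\omega), \]
and comparing the second marginals of the two sides (equivalently, evaluating both on rectangles $\R\times B$) gives $\mu = \int_\Omega\mu_\omega\,dP(\omega)$. I do not expect a genuine obstacle here: all the substance is in Proposition~\ref{p:disintegration prop}, and the only mildly fiddly points are the iterate reduction to two well-separated maps with equal ratio and probability, and — on the first route — the identification of the objects built from $\tilde\Phi$ with those built from $\Phi$. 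The second route makes this identification vacuous, at the cost of reproducing a short computation.
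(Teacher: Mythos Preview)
Your proposal is correct and your first route is exactly the paper's proof: embed $\Phi$ as the single fibre over the trivial base $\Psi=\{\psi_0(x)=x/2\}$, apply Proposition~\ref{p:disintegration prop} to obtain $\delta_0\times\mu=\int_\Omega(\delta_0\times\mu_\omega)\,dP(\omega)$, and strip off the first factor. Your additional remarks on the iterate reduction and on the second route are sound but not needed for the comparison.
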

	\begin{proof}
	Consider the fibre product CIFS where $\Psi = \psi_0$ is the single-element self-similar IFS on $\R$ consisting of the map $\psi_0(x) = x/2$ and whose corresponding fibre is $\Phi$. 
	Let $\mu'$ be the stationary measure for the fibre product CIFS. Then by Proposition~\ref{p:disintegration prop}, 
	\[ \delta_0 \times \mu = \mu' = \int_{\Omega} (\delta_0 \times \mu_{\omega}) \, dP(\omega) = \delta_0 \times \int_{\Omega} \mu_{\omega} \, dP(\omega), \]
	which completes the proof. 
	\end{proof}

	\subsection{Large deviations and Frostman exponent}\label{s:largedev}

	We begin this section by proving a large deviation bound. This bound provides two useful properties. Loosely speaking, the first property ensures that with high probability, a sequence $([\a_i])_{i=1}^{\infty}\in \Omega$ will be such that for all $N\geq N'$, $[\a_i]$ contains many elements for most values $1\leq i\leq N$ (see the definition of $\tilde{\Omega}_{1}$). 
	The second property will ensure that with high probability, after a time $N'$, for a sequence $([\a_i])_{i=1}^{\infty}\in \Omega$ we will be able to control the average behaviour of the contraction ratios $r_{[\a_i]}$ when viewed at a scale $N\geq N'$ (see the definition of $\tilde{\Omega}_{2}$, $\tilde{\Omega}_{3}$, and $\tilde{\Omega}_{4}$). This large deviation bound will be used in the proof of Proposition~\ref{p:frostman} regarding the Frostman exponent of a typical $\mu_{\omega}$, and Proposition~\ref{p:decayoutsidesparse} which guarantees polynomial Fourier decay for $\mu_{\omega}$ outside of a sparse set of frequencies with high probability. 
	
	Recall that $p_{*}$ is the probability weight associated to the maps $\varphi_{j^*,l_1}$ and $\varphi_{j^*,l_2}$, and that $\Lambda$ is the Lyapunov exponent defined in~\eqref{e:lyapunov}. 
	Given $k,N' \in \N$, and $\alpha>0$, and using the convention that the empty product is~$1$, let 
	\begin{align*} 
		\tilde{\Omega}_1 &\coloneqq\bigcap_{N=N'}^{\infty}\left\{([\a_i])_{i=1}^{\infty}\in \Omega:\#\{1\leq i \leq N: \# [\a_i] > 2^{p_{*}k}\}\geq N (1-e^{-\alpha k}) \right\}, \\
		\tilde{\Omega}_2 &\coloneqq  \bigcap_{N=N'}^{\infty} \left\{ ([\a_i])_{i=1}^{\infty} \in \Omega : \prod_{i=1}^N |r_{[\a_i]}| > e^{-2\Lambda kN}  \right\} ,\\
		\tilde{\Omega}_3 &\coloneqq  \bigcap_{N=N'}^{\infty} \left\{([\a_i])_{i=1}^{\infty}\in \Omega:\#\{1\leq i \leq N: |r_{[\a_i]}| \geq \exp(-e^{3\alpha k/4}) \}\geq N (1-e^{-\alpha k}) \right\} , \\
		\tilde{\Omega}_4 &	\coloneqq \bigcap_{N=N'}^{\infty} \Bigg\{([\a_i])_{i=1}^{\infty}\in \Omega:\prod_{\substack{1 \leq i \leq N \\ |r_{[\a_i]}| < \exp(-e^{3\alpha k/4})}} |r_{[\a_i]}| \geq \exp\Big(2N\sum_{\stackrel{[\a]\in I}{|r_{[\a]}|<\exp(- e^{3\alpha k/4})}}q_{[\a]}\log|r_{[\a]}|\Big) \Bigg\}.
	\end{align*}
	The expression $\exp(-e^{3\alpha k /4})$ which appears in the definition of $\tilde{\Omega}_3$ may seem odd; it is a much smaller term than one would expect to see in a large deviation argument. However, taking this term instead of a more traditional large deviation term significantly simplifies part of our later arguments, and in particular simplifies the proof of Proposition~\ref{p:decayoutsidesparse}. 

	Define 
	\[ \Omega^* = \Omega_{k,\alpha,N'} \coloneqq \tilde{\Omega}_1 \cap \tilde{\Omega}_2 \cap \tilde{\Omega}_3 \cap \tilde{\Omega}_4. \] 
    In order to bound the measure of $\Omega^*$, we will make repeated use of the following large deviations inequality, which is a straightforward application of Hoeffding's inequality~\cite{Hoe}. 
    \begin{lemma}\label{l:hoeffding}
        Let $X_1,X_2,\dotsc$ be i.i.d. real-valued random variables, let $S \subset \R$, let $p \coloneqq \mathbb{P}(X_1 \in S)$, and let $q < p$. 
        Then for all $N \in \N$, 
        \[
        \mathbb{P}(\#\{n \in \{1,\dotsc,N\} : X_n \in S \} \leq N q ) \leq e^{-2(p-q)^2 N}.
        \]
    \end{lemma}
    \begin{proof}
        For all $n \in \N$ let $Y_n$ be the random variable taking value $0$ if $X_n \notin S$ and $1$ if $X_n \in S$. Then the $Y_n$ are i.i.d. with $\mathbb{E}(Y_1) = p$. The probability we are trying to calculate is precisely 
        \[
        \mathbb{P}\left( \Big(\sum_{n=1}^N Y_n\Big) - N \mathbb{E}(Y_1) \leq -(p-q)N \right). 
        \]
        By Hoeffding's inequality we can bound this above by $e^{-2(p-q)^2 N}$, as required. 
    \end{proof}

    We will also need the following version of Cram\'er's theorem. 
    \begin{lemma}[Section~2.7 in \cite{Dur}, page~508 in \cite{Klenke}]\label{l:cramer}
        Let $X_1,X_2,\dotsc$ be i.i.d. discrete real-valued random variables. Assume that $\mathbb{E}(e^{\tau X_1}) < \infty$ for some $\tau > 0$. Then for all $\delta > 0$ there exist $\gamma > 0$, $N_0 \in \N$ such that for all $N \geq N_0$, 
        \[ \mathbb{P}\left(|X_1 + \dotsb + X_N - N\mathbb{E}(X_1)|> N \delta\right) \leq e^{-\gamma N}. \] 
    \end{lemma}
We are now ready to bound the measure of $\Omega^{*}$.
	\begin{prop}\label{p:largedev}
		Let $\Phi$ and $\p$ be a CIFS and a probability vector satisfying the hypotheses of Theorem~\ref{t:main}. There exists $\alpha > 0$ such that for all $k\in \N$ sufficiently large, there exists $\beta_k > 0$ such that for all $N'\in \mathbb{N}$,  
		\[1- \mathbb{P}( \Omega^* ) \preceq_{k}  e^{-\beta_k N'}. \]
	\end{prop}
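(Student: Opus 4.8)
The plan is to bound the complement of each of the four events $\tilde\Omega_1,\tilde\Omega_2,\tilde\Omega_3,\tilde\Omega_4$ separately, by a union bound over the (countably many) scales $N\geq N'$, and to show that for each fixed $N$ the failure probability decays geometrically in $N$ with a rate that is bounded below by some $\beta_k>0$ once $k$ is large. Summing the geometric series in $N$ from $N'$ will then give a bound of the shape $\preceq_k e^{-\beta_k N'}$, and a final union bound over the four events completes the proof. The overall structure is: (i) identify, for each $\tilde\Omega_m$, the relevant i.i.d.\ random variables coming from the Bernoulli measure $P$ on $\Omega=I^{\mathbb N}$; (ii) apply the appropriate concentration tool; (iii) verify the rates survive the $k$-dependence.

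For $\tilde\Omega_1$: under $P$, the indicator $Y_i=\mathbf 1\{\#[\a_i]>2^{p_*k}\}$ are i.i.d.\ Bernoulli; since each of the $k$ coordinates of a word independently hits $\{(j^*,l_1),(j^*,l_2)\}$ with probability $2p_*$, the count $\#\{1\le i\le k:a_i\in\{(j^*,l_1),(j^*,l_2)\}\}$ is $\mathrm{Bin}(k,2p_*)$ and, by a Chernoff bound, $\mathbb P(\#[\a_i]\le 2^{p_*k})=\mathbb P(\mathrm{Bin}(k,2p_*)\le p_*k)\le e^{-c_0 k}$ for some $c_0>0$. Thus $\mathbb E Y_i\ge 1-e^{-c_0k}$, and we want $\sum_{i\le N}Y_i\ge N(1-e^{-\alpha k})$. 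Choosing $\alpha<c_0$, Hoeffding's inequality gives $\mathbb P\big(\sum_{i\le N}Y_i<N(1-e^{-\alpha k})\big)\le \exp(-2N(e^{-\alpha k}-e^{-c_0 k})^2)$, which for $k$ large is $\le e^{-N e^{-2\alpha k}}$, say; summing over $N\ge N'$ gives the desired bound with a $k$-dependent rate. The event $\tilde\Omega_3$ is handled identically: $\mathbb P(|r_{[\a_i]}|<\exp(-e^{3\alpha k/4}))$ is, by Markov applied to~\eqref{e:largedevassump} (i.e.\ $\mathbb E|r_{[\a_i]}|^{-\tau}<\infty$ uniformly, after passing to $k$-fold iterates and noting $q$ is built from $p$), at most $(\text{const})\,e^{-\tau e^{3\alpha k/4}}$, which is far smaller than $e^{-\alpha k}$ for large $k$, so again Hoeffding applies. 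For $\tilde\Omega_2$ and $\tilde\Omega_4$ the relevant variable is $Z_i=\log|r_{[\a_i]}|$ (or its truncation to the small-$r$ part); here $\mathbb E(-Z_i)$ is comparable to the Lyapunov exponent $\Lambda$ (finite by~\eqref{e:largedevassump}), and one wants $\sum_{i\le N}(-Z_i)\le 2\Lambda k N$, resp.\ $\sum_{\text{small }i}(-Z_i)\le 2N\sum_{[\a]:\,|r_{[\a]}|<\cdots}q_{[\a]}(-\log|r_{[\a]}|)$. These are \emph{one-sided} upper-tail large deviations for a sum of i.i.d.\ variables with a finite exponential moment of the correct sign ($\mathbb E|r_{[\a_i]}|^{-\tau}<\infty$ is exactly $\mathbb E e^{\tau(-Z_i)}<\infty$), so Cram\'er's theorem—or directly the Chernoff bound $\mathbb P(\sum(-Z_i)\ge \theta N)\le e^{-N(\theta s-\log\mathbb E e^{-sZ_i})}$ optimised over $0<s\le\tau$—gives exponential decay in $N$ at a rate $\beta_k>0$ (note the target thresholds are a definite multiplicative factor above the mean, which keeps the rate bounded away from $0$; the factor $2$ and the factor $k$ provide this slack).

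The main obstacle, and the place to be careful, is the bookkeeping of the $k$-dependence and the reduction "assume without loss of generality two well-separated equal-ratio maps". One must check that after replacing $\Phi$ by its $k$-fold iterate the hypothesis~\eqref{e:largedevassump} still holds with the same (or a controlled) $\tau$, so that the exponential-moment bounds used for $\tilde\Omega_2,\tilde\Omega_4$ and for $\tilde\Omega_3$ are genuinely uniform in $k$; this is where the structure $q_{[\a]}=\sum_{\b\in[\a]}p_\b$ and the multiplicativity $p_{\b}=\prod p_{b_j}$, $r_{[\a]}=\prod r_{[\a]}$'s ingredients enter. Once the exponential moment is controlled uniformly, each of the four rates is of the form $\beta_k = c\, e^{-2\alpha k}$ or better (for $\tilde\Omega_1,\tilde\Omega_3$) and $\beta_k$ bounded below by a positive constant independent of $k$ (for $\tilde\Omega_2,\tilde\Omega_4$); taking the minimum of the four gives the $\beta_k>0$ in the statement. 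Summing the four geometric series $\sum_{N\ge N'}e^{-\beta_k N}\preceq_k e^{-\beta_k N'}$ and applying the union bound
\[
1-P(\Omega^*)\le \sum_{m=1}^4 \big(1-P(\tilde\Omega_m)\big)\preceq_k e^{-\beta_k N'}
\]
finishes the argument.
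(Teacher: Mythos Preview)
Your proposal is correct and follows essentially the same architecture as the paper: bound each $1-P(\tilde\Omega_m)$ separately, use Hoeffding for $\tilde\Omega_1$ and $\tilde\Omega_3$, Cram\'er/Chernoff for $\tilde\Omega_2$ and $\tilde\Omega_4$, sum the geometric series over $N\ge N'$, and take a union bound.

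Two small points of comparison. For $\tilde\Omega_3$, you bound $P(|r_{[\a]}|<\exp(-e^{3\alpha k/4}))$ via Markov applied to the exponential moment $|r_{[\a]}|^{-\tau}$, whereas the paper instead uses a Chebyshev/variance argument (writing $-\log|r_{[\a]}|$ as a sum of $k$ i.i.d.\ terms and using $\Var[\log|r_a|]<\infty$) to get the weaker but sufficient bound $e^{-5\alpha k/4}$. Your route is more direct, but your parenthetical claim that $\mathbb E|r_{[\a_i]}|^{-\tau}$ is \emph{uniformly} bounded is not literally correct: since $r_{[\a]}$ is a product over a length-$k$ word, one has $\mathbb E_q|r_{[\a]}|^{-\tau}=(\sum_a p_a|r_a|^{-\tau})^k$, which grows geometrically in $k$. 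This does not damage the argument, because $C^k/\exp(\tau e^{3\alpha k/4})$ is still $o(e^{-\alpha k})$, but it is worth stating correctly. For $\tilde\Omega_2$, the paper sidesteps the $k$-block bookkeeping you flag as the ``main obstacle'' by observing that $\prod_{i=1}^N|r_{[\a_i]}|=\prod_{j=1}^{kN}|r_{a_j}|$ and applying Cram\'er directly on the single-letter alphabet $\A$; this yields a rate $e^{-\delta kN}$ with $\delta>0$ independent of $k$, which is slightly cleaner than working at the block level.
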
%
	\begin{proof}
		Throughout the proof we let $N' \in \N$ be arbitrary and fixed. The probability of the event $a_i\in \{(j_{*},l_1),(j_{*},l_2)\}$ is $2p_{*}$ for all $1\leq i\leq k$. Therefore applying Lemma~\ref{l:hoeffding} there exists some small $\alpha>0$ such that for all integers $k$ we have
        \begin{align}
		\label{e:klargedev}
	 & \sum_{[\a] \in I:\, \# [\a] \leq 2^{p_{*} k} } q_{[\a]}\nonumber\\
            &=\sum_{\stackrel{\a\in \A^k}{\#\{i:a_i\in \{(j_{*},l_1),(j_{*},l_2)\}\} \leq p_{*} k}}p_{\a}\nonumber \\
            &=m\left(\left\{\a\in \A^{\N}:\#\{1\leq i\leq k:a_i\in \{(j_{*},l_1),(j_{*},l_2)\}\} \leq p_{*} k\right\}\right)\nonumber\\
            &\leq e^{-2\alpha k}.
		\end{align}
	Here $m$ denotes the infinite product measure on $\A^{\N}$ corresponding to the probability vector $\p$.	In the first equality we used Lemma~\ref{l:basic properties}.4. 
    
    We now fix $\alpha$ such that~\eqref{e:klargedev} holds. To complete our proof of this proposition, it suffices to show that for this value of $\alpha$, for each $\tilde{\Omega}_i$, for all $k$ sufficiently large there exists $\beta_{i,k}>0$ such that for all $N' \in \N$, 
		\begin{equation}
			\label{e:Omega_i STS}
			1-P(\tilde{\Omega}_i)\preceq_{k}e^{-\beta_{i,k}N'}.
		\end{equation}
		Taking $\beta_{k} \coloneqq \min\{\beta_{1,k},\beta_{2,k},\beta_{3,k},\beta_{4,k}\}$, our result will then follow. \bigskip
		
		\noindent \textbf{Verifying~\eqref{e:Omega_i STS} for $\tilde{\Omega}_1$.} 
	
			We begin by emphasising that~\eqref{e:klargedev} gives an upper bound for the probability of the event $\#[\a]\leq 2^{p_{*}k}$. Therefore if we let 
            \[\tilde{\Omega}_{k,\alpha}^{(N)}\coloneqq\left\{([\a_i])\in \Omega:\#\{1\leq i\leq N:\#[\a_i] > 2^{p_{*}k}\}\geq N (1-e^{-\alpha k})\right\},\] then we can apply Lemma~\ref{l:hoeffding} again to show that for all $k\in \N$ there exists $\beta_{1,k} > 0$ such that for all $N\in \mathbb{N}$ we have
			\[ P( \tilde{\Omega}_{k,\alpha}^{(N)} ) \geq 1 - e^{- \beta_{1,k} N}. \] 
			Using this inequality, we observe that 
			\[ 1-P(  \tilde{\Omega}_1 ) = P\left( \bigcup_{N=N'}^{\infty} (\Omega \setminus \tilde{\Omega}_{k,\alpha}^{(N)} ) \right) \leq \sum_{N=N'}^{\infty} e^{- \beta_{1,k} N} = \frac{e^{-\beta_{1,k} N'}}{1-e^{-\beta_{1,k}}} \preceq_{k} e^{-\beta_{1,k} N'}. \] 
			Therefore~\eqref{e:Omega_i STS} holds for $\tilde{\Omega}_1$.\bigskip
			
			\noindent 	\textbf{Verifying~\eqref{e:Omega_i STS} for $\tilde{\Omega}_2$.}
			
	First observe the following: 
		\begin{align}	
			\label{eq:Cramer setup}
			&P \left( \left\{ ([\a_i]) \in \Omega : \prod_{i=1}^N |r_{[\a_i]}| \leq e^{-2\Lambda kN}  \right\} \right) \nonumber \\&=\sum_{\stackrel{\a\in \A^{kN}}{\prod_{i=1}^{kN}|r_{a_i}|\leq e^{-2\Lambda kN}}}p_{\a}\nonumber\\
            &=m\left( \left\{ \a\in \A^{\N}:-\sum_{i=1}^{kN}\log|r_{a_i}|-\Lambda kN\geq \Lambda kN \right\} \right).            
		\end{align} 
		Since $\sum_{j\in J}\sum_{l\in L_{j}}p_{l,j}|r_{l,j}|^{-\tau}<\infty$ for some $\tau>0$, we can apply Lemma~\ref{l:cramer}, considering the $-\log |r_{a_i}|$ as i.i.d. random variables with expectation $\Lambda$.  Applying this lemma to the expression in~\eqref{eq:Cramer setup}, we see that there exists $\delta>0$ and $N''\in\mathbb{N}$ such that for all $N \geq N''$ we have
		\begin{equation*}\label{e:jordansahlsten} 
			P \left( \left\{ ([\a_i]) \in \Omega : \prod_{i=1}^N |r_{[\a_i]}| \leq e^{-2\Lambda kN}  \right\} \right) \leq e^{-\delta kN} . 
		\end{equation*} 
		For such a value of $\delta$ we let $\beta_{2,k} \coloneqq \delta/2$. If $N' \geq N''$ is sufficiently large, the following string of inequalities holds: 
		\begin{align*} P\left( \bigcup_{N=N'}^{\infty} \left\{ ([\a_i]) \in \Omega : \prod_{i=1}^N |r_{[\a_i]}| \leq e^{-2\Lambda k N}  \right\} \right)  &\leq e^{-\delta kN'}\sum_{i=0}^{\infty}e^{-\delta ki}\\
			& \leq e^{-\beta_{2,k} kN'}\\
			& \leq e^{-\beta_{2,k} N'}.  
			\end{align*}%
		Thus the desired bounds holds for all $N'$ sufficiently large. 
        Therefore we can ensure that~\eqref{e:Omega_i STS} holds for $\tilde{\Omega}_{2}$ for all $N' \in \N$. \bigskip
			
			\noindent \textbf{Verifying~\eqref{e:Omega_i STS} for $\tilde{\Omega}_3$.}
			
			We begin by remarking that for $\a\in \A^{k}$ the condition  $|r_{\a}| \geq \exp(- e^{3\alpha k/4})$ is equivalent to $\sum_{i=1}^{k}\log |r_{a_i}|\geq -e^{3\alpha k/4}$. For all $k$ sufficiently large, the following holds: 
			\begin{align*}
				\sum_{\stackrel{[\a]\in I}{|r_{[\a]}| < \exp(- e^{3\alpha k/4})}}q_{[\a]}&=\sum_{\stackrel{\a\in \A^{k}}{|r_{\a}| < \exp(- e^{3\alpha k/4})}}p_{\a} \\
				&=\sum_{\stackrel{\a\in \A^{k}}{-\sum_{i=1}^{k}\log |r_{a_i}|< e^{3\alpha k/4}}}p_{\a}\\
				&\leq \sum_{\stackrel{\a\in \A^{k}}{(\sum_{i=1}^{k}\log |r_{a_i}|+k\Lambda)^2> (-e^{3\alpha k/4}+k\Lambda)^2}}p_{\a}\\
				&\leq \frac{ \Var \left[ \sum_{i=1}^{k}\log|r_{a_i}|\right]}{(-e^{3\alpha k/4}+k\Lambda)^2}\\
				&\leq \frac{ 2\Var \left[ \sum_{i=1}^{k}\log|r_{a_i}|\right]}{e^{3\alpha k/2}}\\
				&=\frac{2k\Var [\log |r_{a}|]}{e^{3\alpha k/2}}\\
				&\leq e^{-5\alpha k/4}.
			\end{align*} 
			In the second inequality we used Markov's inequality. In the penultimate line we used that the random variable $\a\mapsto \sum_{i=1}^{k}\log |r_{a_{i}}|$ is the sum of independent random variables and  therefore $\Var\left[\sum_{i=1}^{k}\log|r_{a_i}|\right]=k\Var[\log |r_{a}|]$. In the final line we used that $\Var[\log |r_{a}|]<\infty$, which is a consequence of~\eqref{e:largedevassump}. 
            
            Summarising, we have shown that 
			\begin{equation}
				\label{e:omega3 bound}
				\sum_{\stackrel{[\a]\in I}{|r_{[\a]}| < \exp(- e^{3\alpha k/4})}}q_{[\a]}\leq e^{-5\alpha k/4}
			\end{equation} 
			for all $k$ sufficiently large.
		 Equation~\eqref{e:omega3 bound} gives an upper bound for the probability of the event $|r_{[\a_i]}| < \exp(-e^{3\alpha k/4})$. As such we can apply Lemma~\ref{l:hoeffding} in an analogous way to our argument for $\tilde{\Omega}_{1}$ to show that for all $k$ sufficiently large there exists $\beta_{3,k}>0$ such that 
			\[ 1-P(\tilde{\Omega}_3)\preceq_{k}e^{-\beta_{3,k}N'}\] 
			holds for all $N' \in \N$. Therefore~\eqref{e:Omega_i STS} holds for $\Omega_{3}$. \bigskip
			
			\noindent \textbf{Verifying~\eqref{e:Omega_i STS} for $\tilde{\Omega}_{4}$.}	
			
			Consider the random variable $f\colon I\to \R$ given by 
			\begin{align*}
				f([\a]) &=
				\begin{cases}
					0        & \text{if } |r_{[\a]}| \geq  \exp(- e^{3\alpha k/4}), \\
					\log |r_{[\a]}|        & \text{if } |r_{[\a]}|< \exp(- e^{3\alpha k/4}).
				\end{cases}
			\end{align*}
			The significance of the random variable $f$ is that for all $([\a_i])\in \Omega$ we have  
			\[ \prod_{\substack{1 \leq i \leq N \\ |r_{[\a_i]}| < \exp(- e^{3\alpha k/4})}} |r_{[\a_i]}|=\exp\left({\sum_{i=1}^{N}f([\a_i])}\right).\] 
			It follows from our underlying assumptions 
			\[ -\sum p_{l,j}\log |r_{l,j}|<\infty\quad \text{ and }\quad\sum p_{l,j} |r_{l,j}|^{-\tau}<\infty \] 
			for some $\tau>0$, that 
			\[ \sum_{[\a]\in I}q_{[\a]}f([\a])<\infty\quad \text{ and }\quad \sum_{[\a]\in I}q_{[\a]}e^{-\tau f([\a])}<\infty \] for the same value of $\tau$. Therefore the random variable $f$ satisfies the assumptions of Lemma~\ref{l:cramer}. We also remark that 
			\[
			\mathbb{E}[f]=\sum_{[\a]\in I}q_{[\a]}f([\a])=\sum_{\stackrel{[\a]\in I}{|r_{[\a]}|<\exp(- e^{3\alpha k/4})}}q_{[\a]}\log|r_{[\a]}|.
            \]
			Now applying Lemma~\ref{l:cramer} and replicating the argument given in the proof of~\eqref{e:Omega_i STS} for $\tilde{\Omega}_2$ with the random variable $a\mapsto \log |r_{a}|$ replaced with the random variable $[\a]\mapsto f([\a])$, we see that there exists $\beta_{4,k}>0$ such that for all $N' \in \N$, the following probability can be bounded above by $e^{-\beta_{4,k}N'}$: 
			\begin{equation*} 
			P\Bigg(\bigcup_{N=N'}^{\infty}\Big\{([\a_i]):\prod_{\substack{1 \leq i \leq N \\ |r_{[\a_i]}| < \exp(- e^{3\alpha k/4})}} |r_{[\a_i]}|<\exp\Big(2N\sum_{\stackrel{[\a]\in I}{|r_{[\a]}|<\exp(- e^{3\alpha k/4})}}q_{[\a]}\log|r_{[\a]}| \Big)\Big\}\Bigg). 
			\end{equation*}
			This final statement is equivalent to~\eqref{e:Omega_i STS} for $\tilde{\Omega}_{4}$. 
			Our proof of Proposition~\ref{p:largedev} is therefore complete. 
		\end{proof}
		The expression appearing within the $\exp$ term in the definition of $\tilde{\Omega}_{4}$ might at first appear not to be particularly meaningful. The following lemma shows that in fact it can be controlled by our parameter $k$.
		
		\begin{lemma}
			\label{l:little o lemma}
			Let $\Phi$ and $\p$ be a CIFS and a probability vector satisfying the hypotheses of Theorem~\ref{t:main}. Then for $\alpha$ as in the statement of Proposition~\ref{p:largedev}, we have that
			\[	-\sum_{\stackrel{[\a]\in I}{|r_{[\a]}|<\exp(- e^{3\alpha k/4})}}q_{[\a]}\log|r_{[\a]}|=o_{k}(1).\]	
		\end{lemma}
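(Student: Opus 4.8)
The plan is to recognise the left-hand side as a truncated first moment of a sum of i.i.d.\ random variables and to bound it using the exponential-moment estimate supplied by~\eqref{e:largedevassump}. First I would pass from equivalence classes to words: by Lemma~\ref{l:basic properties}.3 and the definition $q_{[\a]}=\sum_{\b\in[\a]}p_{\b}$, the quantities $q_{[\a]}$, $|r_{[\a]}|$ and $\log|r_{[\a]}|$ depend only on the class, so that
\[ -\sum_{\stackrel{[\a]\in I}{|r_{[\a]}|<\exp(- e^{3\alpha k/4})}}q_{[\a]}\log|r_{[\a]}| \;=\; -\sum_{\stackrel{\a\in \A^{k}}{|r_{\a}|<\exp(- e^{3\alpha k/4})}}p_{\a}\log|r_{\a}| \;=\; \mathbb{E}\!\left[ X_k\,\mathbf{1}_{\{X_k>e^{3\alpha k/4}\}} \right], \]
where $X_k(\a)=-\log|r_{\a}|=\sum_{i=1}^{k}(-\log|r_{a_i}|)$ is a sum of $k$ i.i.d.\ strictly positive random variables with respect to the Bernoulli measure $\p^{\otimes k}$ on $\A^{k}$ (positivity since $|r_{a_i}|\in(0,1)$). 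As $\mathbb{E}[X_k]=k\Lambda$ is exponentially smaller than the threshold $e^{3\alpha k/4}$, we are in a genuine large deviation regime; moreover the whole expression is non-negative, so it suffices to bound it above by a quantity tending to $0$ as $k\to\infty$.

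The key device is a pointwise inequality turning the truncated first moment into an exponential moment. With $\tau>0$ as in~\eqref{e:largedevassump}, for every $x\ge 0$ one has $x\,e^{-(\tau/2)x}\le \frac{2}{\tau e}$ (the maximum of $y\mapsto y e^{-(\tau/2)y}$ on $[0,\infty)$) and $\mathbf{1}_{\{x>T\}}\le e^{(\tau/2)(x-T)}$ for $T>0$; multiplying these gives $x\,\mathbf{1}_{\{x>T\}}\le \frac{2}{\tau e}\,e^{-(\tau/2)T}\,e^{\tau x}$. Applying this with $x=X_k$ and $T=e^{3\alpha k/4}$ and taking expectations,
\[ \mathbb{E}\!\left[ X_k\,\mathbf{1}_{\{X_k>e^{3\alpha k/4}\}} \right] \;\le\; \frac{2}{\tau e}\, \exp\!\big(-\tfrac{\tau}{2}e^{3\alpha k/4}\big)\, \mathbb{E}\!\left[e^{\tau X_k}\right]. \]
By independence of the coordinates, $\mathbb{E}\!\left[e^{\tau X_k}\right]=\mathbb{E}\!\left[|r_{\a}|^{-\tau}\right]=\left(\sum_{j\in J}\sum_{l\in L_j}p_{j,l}|r_{j,l}|^{-\tau}\right)^{\!k}=:M^{k}$, which is finite precisely by~\eqref{e:largedevassump}. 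Hence the left-hand side is at most $\frac{2}{\tau e}\exp\!\big(k\log M-\tfrac{\tau}{2}e^{3\alpha k/4}\big)$, and since $k\log M=o(e^{3\alpha k/4})$ as $k\to\infty$ this tends to $0$, which finishes the argument.

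I do not expect a serious obstacle here. The two points needing a little care are: first, noticing that the quantity in question is a truncated first moment rather than a probability, so that Cram\'er's theorem is not quite the tool to invoke directly; and second, choosing the auxiliary exponent $\tau/2$ (equivalently, the splitting $x\,\mathbf{1}_{\{x>T\}}\le \frac{2}{\tau e}e^{-(\tau/2)T}e^{\tau x}$) so that the surviving moment $\mathbb{E}[e^{\tau X_k}]$ is exactly the quantity that~\eqref{e:largedevassump} controls. After that, the doubly-exponential threshold $e^{3\alpha k/4}$ built into $\tilde{\Omega}_3$ easily dominates the at most exponential growth $M^{k}$, which is precisely why that slightly unusual threshold was chosen.
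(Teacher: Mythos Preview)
Your proof is correct and is in fact cleaner than the paper's. The paper proceeds by a dyadic-type decomposition: it slices the tail $\{|r_{[\a]}|<\exp(-e^{3\alpha k/4})\}$ into shells $\exp(-e^{3\alpha(j+1)/4})\le |r_{[\a]}|<\exp(-e^{3\alpha j/4})$ for $j\ge k$, bounds $-\log|r_{[\a]}|$ on each shell by $e^{3\alpha(j+1)/4}$, and controls the mass of each shell by the variance bound~\eqref{e:omega3 bound} (i.e.\ Markov's inequality on $\Var[\log|r_a|]$), obtaining a geometrically decaying series $\sum_{j\ge k}e^{-\alpha j/2}$. Your argument instead dominates the truncated first moment in one step by the pointwise Chernoff-type inequality $x\,\mathbf{1}_{\{x>T\}}\le \tfrac{2}{\tau e}e^{-(\tau/2)T}e^{\tau x}$ and then uses independence to factor $\mathbb{E}[e^{\tau X_k}]=M^k$, invoking the full exponential-moment hypothesis~\eqref{e:largedevassump} directly rather than just its second-moment consequence. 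The trade-off is that the paper's route uses a nominally weaker input (finite variance suffices for their shell estimate), whereas yours exploits the stronger assumption already in force to avoid the decomposition entirely and get a much sharper (doubly-exponentially decaying) bound.
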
 
		\begin{proof}
			Let $\alpha$ be as in the statement of Proposition~\ref{p:largedev}. Replicating the argument used to prove~\eqref{e:omega3 bound} we can show that for all $k$ sufficiently large and any $j \geq k$ we have 
			\begin{equation}
				\label{e:general measure bound}
				\sum_{\stackrel{[\a]\in I}{|r_{[\a]}| < \exp(- e^{3\alpha j/4})}}q_{[\a]}\leq e^{-5\alpha j/4}.
			\end{equation}
			Therefore the following holds for all $k$ sufficiently large: 
			\begin{align*}
				&-\sum_{\stackrel{[\a]\in I}{|r_{[\a]}|<\exp(- e^{3\alpha k/4})}}q_{[\a]}\log|r_{[\a]}|\\
				&= -\sum_{j=k}^{\infty}\sum_{\stackrel{[\a]\in I}{\exp(- e^{3\alpha (j+1)/4})\leq |r_{[\a]}|<\exp(-  e^{3\alpha j/4})}}q_{[\a]}\log|r_{[\a]}| \\
			&	\leq e^{3\alpha/4} \sum_{j=k}^{\infty}\sum_{\stackrel{[\a]\in I}{\exp(-  e^{3\alpha (j+1)/4})\leq |r_{[\a]}|<\exp(-  e^{3\alpha j/4})}}q_{[\a]}e^{3\alpha j/4}\\
				&\leq e^{3\alpha/4} \sum_{j=k}^{\infty}e^{3\alpha j/4}\sum_{\stackrel{[\a]\in I}{ |r_{[\a]}|<\exp(-  e^{3\alpha j/4})}}q_{[\a]}\\
				&\leq e^{3\alpha/4} \sum_{j=k}^{\infty}e^{-\alpha j/2}=o_{k}(1),
			\end{align*}
			where in the final line we applied~\eqref{e:general measure bound}.
		\end{proof}

		The following proposition is an application of Proposition~\ref{p:largedev}. 
		It says that outside of a set of $\omega$ whose $P$-measure decays in a way that depends upon $k$, the Frostman exponent of $\mu_{\omega}$ can be bounded from below by a quantity that only depends upon our initial IFS and the underlying probability vector. 
        Recall the Lyapunov exponent $\Lambda$ from~\eqref{e:lyapunov}. 
		Here and elsewhere, `sufficiently large' allows dependence on the IFS and the measure $\mu$ only. 
		\begin{prop}\label{p:frostman}
			For all $k\in\mathbb{N}$ sufficiently large, there exists $\beta \in (0,1]$ such that for all $r'\in(0,\infty)$, there exists $\Omega_{1}\subset \Omega$ such that $P(\Omega \setminus \Omega_1) \preceq_{k} (r')^{\beta}$ and such that for all $\omega \in \Omega_1$, $r \in (0,r')$ and $x \in \R$,
			\[ \mu_{\omega}((x,x+r)) \leq 3r^{s_{\Phi}}, \]
			where 
			\[ s_{\Phi} \coloneqq \frac{p_{*}\log 2}{5\Lambda } > 0. \] 
		\end{prop}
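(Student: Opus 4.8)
The plan is to exploit the Moran/infinite‑convolution structure of $\mu_{\omega}$ recorded in~\eqref{e:convstructure}. First I would fix $\omega = ([\a_i])_{i=1}^{\infty} \in \Omega$ and, for $\b_i \in [\a_i]$, set $I_{\b_1 \cdots \b_m} \coloneqq \gamma_{\b_1} \circ \cdots \circ \gamma_{\b_m}([0,1])$, $L_m \coloneqq \prod_{i=1}^m |r_{[\a_i]}|$ and $M_m \coloneqq \prod_{i=1}^m (\#[\a_i])^{-1}$. By Lemma~\ref{l:basic properties}.1. (applied at the first coordinate in which two such words differ, using injectivity of the $\gamma_{\b}$) the intervals $I_{\b_1 \cdots \b_m}$, over all $(\b_1,\dotsc,\b_m)$, are pairwise disjoint, and by Lemma~\ref{l:basic properties}.3. each has length exactly $L_m$. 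Since $\pi_{\omega}$ maps the symbolic cylinder on $(\b_1, \dotsc, \b_m)$ into $I_{\b_1 \cdots \b_m}$, and that cylinder has $m_{\omega}$‑measure $M_m$, we get $\mu_{\omega}(I_{\b_1 \cdots \b_m}) \geq M_m$; these lower bounds already sum to $1$ and the $I_{\b_1 \cdots \b_m}$ are disjoint, so in fact $\mu_{\omega}(I_{\b_1 \cdots \b_m}) = M_m$ and $\mu_{\omega}$ is supported on their union.

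Next I would choose the exceptional set. Fix $k$ large enough that $1 - e^{-\alpha k} \geq 4/5$ (with $\alpha$ from Proposition~\ref{p:largedev}), put $\beta \coloneqq \beta_k/(4\Lambda k)$ with $\beta_k$ from that proposition, and let $r' \in (0,1)$. If $r' \geq e^{-4\Lambda k}$ I would take $\Omega_1 = \varnothing$: the assertion for $\omega \in \Omega_1$ is then vacuous, and since $(r')^{\beta}$ is bounded below in terms of $k$ the bound $P(\Omega \setminus \Omega_1) = 1 \preceq_k (r')^{\beta}$ holds. So assume $r' < e^{-4\Lambda k}$, put $N' \coloneqq \lceil \log(1/r')/(4\Lambda k) \rceil \geq 1$ and $\Omega_1 \coloneqq \Omega^{*}$ (for this $N'$); by Proposition~\ref{p:largedev}, $P(\Omega \setminus \Omega_1) \preceq_k e^{-\beta_k N'} \leq (r')^{\beta}$. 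Now fix $\omega \in \Omega_1$, $r \in (0,r')$ and $x \in \R$, and let $n$ be the least integer with $L_n \leq r$. Using $\omega \in \tilde{\Omega}_2$ one checks $L_{N'} > e^{-2\Lambda k N'} \geq \sqrt{r'}\,e^{-2\Lambda k} > r' > r$, and since $L_m$ is decreasing this forces $n \geq N' + 1 \geq 2$.

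For the main estimate, since the level‑$(n-1)$ cylinders are pairwise disjoint intervals of common length $L_{n-1} > r$, the interval $(x,x+r)$ meets at most two of them, so
\[ \mu_{\omega}((x,x+r)) \leq 2 M_{n-1}. \]
From $\omega \in \tilde{\Omega}_1$ and $n-1 \geq N'$ we get $\prod_{i=1}^{n-1} \#[\a_i] \geq 2^{p_{*} k (n-1)(1-e^{-\alpha k})}$, and from $\omega \in \tilde{\Omega}_2$ and $n \geq N'$ we get $r \geq L_n > e^{-2\Lambda k n}$, so by the definition $s_{\Phi} = p_{*}(\log 2)/(5\Lambda)$ this gives $r^{-s_{\Phi}} < e^{2 s_{\Phi}\Lambda k n} = 2^{2p_{*}kn/5}$. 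Since $n \geq 2$ gives $n-1 \geq n/2$ and $1 - e^{-\alpha k} \geq 4/5$, it follows that $\prod_{i=1}^{n-1}\#[\a_i] \geq 2^{2p_{*}kn/5} > r^{-s_{\Phi}}$, i.e. $M_{n-1} < r^{s_{\Phi}}$, and hence $\mu_{\omega}((x,x+r)) \leq 2 M_{n-1} \leq 3 r^{s_{\Phi}}$, as required.

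The delicate point is the calibration in the second paragraph: the cutoff $N' = N'(r')$ must be large enough that the stopping level always satisfies $n-1 \geq N'$ (so the deviation estimates of Proposition~\ref{p:largedev} apply at the relevant generation), yet small enough that $e^{-\beta_k N'} \preceq_k (r')^{\beta}$, which pins down $N' \approx_k \log(1/r')$ and is why the statement only carries content for $r'$ small relative to $k$. The constant $5$ in $s_{\Phi}$ is exactly what gives the slack $2/5 < (1/2)(4/5)$ needed to absorb the factors $(n-1)/n$ and $1 - e^{-\alpha k}$ in the final comparison. Note that $\tilde{\Omega}_3$, $\tilde{\Omega}_4$ and Lemma~\ref{l:little o lemma} are not used here, so one could equally take $\Omega_1 = \tilde{\Omega}_1 \cap \tilde{\Omega}_2$ with the above $N'$.
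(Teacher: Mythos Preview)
Your proof is correct and follows essentially the same approach as the paper: both arguments exploit the disjoint cylinder structure of $\mu_\omega$, using membership in $\tilde{\Omega}_1$ to bound the mass of each level-$m$ cylinder and $\tilde{\Omega}_2$ to relate the scale $r$ to the level. The only cosmetic difference is that the paper fixes a deterministic stopping level $N^{(r)} = \lfloor -\log r/(2k\Lambda) \rfloor$ and then invokes $\tilde{\Omega}_2$ to check that the cylinders at that level have length at least $r$, whereas you choose the adaptive level $n = \min\{m : L_m \leq r\}$ and invoke $\tilde{\Omega}_2$ to check $n-1 \geq N'$; your treatment of the edge case $r' \geq e^{-4\Lambda k}$ and of the constants is also slightly more explicit.
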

		\begin{proof}
			Let $\alpha>0$ be as in Proposition~\ref{p:largedev}. Let $k\in\mathbb{N}$ be sufficiently large that $e^{-\alpha k}<1/2$ and let $\beta_{k}$ be the associated parameter coming from Proposition~\ref{p:largedev}. We let $\beta = \min\{ \beta_k/(3k \Lambda), 1\}$. 
			If $r'\geq 1$ then the result is true simply by letting $\Omega_1 = \varnothing$, so we let $r'\in(0,1)$ be arbitrary. 
			
			For any $r\in(0,1)$ we let $N^{(r)} \coloneqq \lfloor \frac{-\log r}{2k\Lambda} \rfloor$. 
			Letting $N' = N^{(r')}$, we get a set $\Omega^*$ from Proposition~\ref{p:largedev}, which we define to be $\Omega_1$. 
			Observe that
			\[ P(\Omega \setminus \Omega_1) \preceq_{k} e^{-\beta_k N^{(r')}} \preceq_{k}  (r')^{\beta}. \]  
			Now fix any $\omega \in \Omega_1$ and $r \in (0,r')$.
			Since $\omega\in \Omega_{1}$ we also have $\omega\in \tilde{\Omega}_{1}$. 
			Appealing now to the definition of $\tilde{\Omega}_{1}$, and recalling parts $1.$, $3.$ and $4.$ of Lemma~\ref{l:basic properties}, we see that $\mu_{\omega}$ is supported inside at least $2^{p_{*} k N_r/2}$ disjoint intervals which each have length $\prod_{n=1}^{N^{(r)}}| r_{[\a_n]}|$ and mass at most $2^{- p_{*} k N_r/2}$. 
			Here we have used our assumption that $k$ is sufficiently large so that $e^{-\alpha k}<1/2$. 
			Since $\omega \in \Omega_{1}$ and therefore $\omega\in \tilde{\Omega}_{2}$, we have $\prod_{n=1}^{N^{(r)}}| r_{[\a_n]}| \geq \exp(-2\Lambda k N^{(r)}) \geq r$. 
			Using this inequality and the above, it follows that for all $x \in \R$, 
			\[ \mu_{\omega}((x,x+r)) \leq 3 \cdot  2^{- p_{*} k N_r/2} \leq 3r^{s_{\Phi}}. \qedhere \]
		\end{proof}
		
		\subsection{Decay outside sparse frequencies} 
		We will use the following basic fact about weighted sums of points on the unit circle. 
		\begin{lemma}
			\label{l:fourierdecay}
			Let $\p=(p_1,\ldots,p_n)$ satisfy $p_i>0$ for all $i$ and $\sum_i p_i=1$. Then for all $\delta \in (0,\pi]$ there exists $\upsilon\in(0,1)$ (depending upon $\p$ and $\delta$) such that if the points $z_1,\ldots,z_n\in \mathbb{C}$ each satisfy $|z_i|=1$, and if there exist $j,k$ such that $\mbox{dist}(arg(z_j)-arg(z_k),2\pi\mathbb{Z}) \geq \delta$, then $|\sum_i p_i z_i|\leq \upsilon$. 
		\end{lemma}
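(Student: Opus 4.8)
The plan is to prove this by a short second-moment computation, which has the advantage of producing an explicit value of $\upsilon$ and making its dependence on $\p$ and $\delta$ transparent; a soft compactness argument works equally well and I sketch it at the end.

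First I would dispose of the trivial case $n = 1$, in which there is no pair of indices and so the hypothesis is never met; hence I may assume $n \ge 2$, and in particular $\min_i p_i \le 1/2$. I would also read the phrase ``$|\arg(z_i) - \arg(z_j)| \ge \delta$'' as saying that the relevant pair of unit vectors subtends an angle of at least $\delta$, equivalently $\mathrm{Re}(z_i \overline{z_j}) \le \cos\delta$, and assume $\delta \le \pi$ (otherwise the hypothesis is vacuous). The computation then rests on the identity
\[
\Bigl| \sum_i p_i z_i \Bigr|^2 = \sum_{i,j} p_i p_j\, \mathrm{Re}\bigl(z_i \overline{z_j}\bigr) = 1 - \sum_{i,j} p_i p_j \bigl(1 - \mathrm{Re}(z_i \overline{z_j})\bigr),
\]
where the second step uses $\sum_{i,j} p_i p_j = 1$ and every summand on the right is nonnegative since $|\mathrm{Re}(z_i \overline{z_j})| \le 1$. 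If $(j_0,k_0)$ is the pair witnessing the hypothesis, then discarding all but the two terms indexed by $(j_0,k_0)$ and $(k_0,j_0)$ gives
\[
\sum_{i,j} p_i p_j \bigl(1 - \mathrm{Re}(z_i \overline{z_j})\bigr) \ge 2 p_{j_0} p_{k_0} (1 - \cos\delta) \ge 2 \bigl(\min_i p_i\bigr)^2 (1 - \cos\delta) =: c ,
\]
so that $\bigl| \sum_i p_i z_i \bigr|^2 \le 1 - c$.

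It remains to observe that $c > 0$ (since $\min_i p_i > 0$ and $\delta > 0$) and, because $n \ge 2$ and $\delta \le \pi$, that $c \le 2 \cdot (1/2)^2 \cdot 2 = 1$; hence $\sqrt{1-c} \in [0,1)$, and setting $\upsilon \coloneqq \max\bigl(\sqrt{1-c},\, 1/2\bigr)$ gives $\upsilon \in (0,1)$, depending only on $\p$ and $\delta$, with $|\sum_i p_i z_i| \le \upsilon$ as required. The $\max$ with $1/2$ is needed only in the degenerate corner $n = 2$, $p_1 = p_2 = 1/2$, $\delta = \pi$, where $c = 1$ and the sum is in fact forced to be $0$.

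I do not expect a genuine obstacle: this is a soft statement, and the only points requiring a word of care are the interpretation of $\arg$ and this degenerate configuration, both handled above. As an alternative one can argue by compactness: the subset of $(S^1)^n$ on which some pair of coordinates is at angular distance $\ge \delta$ is closed, hence compact; the map $(z_1,\dots,z_n) \mapsto |\sum_i p_i z_i|$ is continuous and, because every $p_i$ is strictly positive, takes the value $1$ only when all the $z_i$ coincide, so it attains a maximum $\upsilon_0 < 1$ on this set, and $\upsilon = \max(\upsilon_0, 1/2)$ works.
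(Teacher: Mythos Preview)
Your proof is correct. The paper itself does not give a proof of this lemma --- it simply says ``This lemma is straightforward to verify, and is left to the reader'' --- so there is nothing to compare against, and your second-moment computation (together with the compactness sketch) fills the gap cleanly, with the bonus of an explicit $\upsilon$.
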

		\begin{proof}
		This lemma is straightforward to verify, and is left to the reader. 
		\end{proof}
		
		It follows from~\eqref{e:convstructure} that for all $\omega\in \Omega$, the Fourier transform of $\mu_{\omega}$ is the infinite product 
		\begin{equation}\label{e:ft}
			\widehat{\mu_{\omega}}(\xi)=\prod_{m=1}^{\infty}\frac{1}{\# [\a_m]}\sum_{\b\in [\a_m]}e\Big(\xi \cdot t_{\b,[\a_m]}\cdot \prod_{i=1}^{m-1}r_{[\a_i]} \Big).
		\end{equation} 
		
		We are now ready to prove the key technical result that if we fix $k$ large enough, for a set of $\omega$ with large $P$-measure, the Fourier transform of $\mu_{\omega}$ decays at a polynomial rate outside of a sparse set of frequencies. 	
		To prove this result we will use~\eqref{e:ft} which connects the behaviour of $\widehat{\mu_{\omega}}(\xi)$ to the distribution of a sequence modulo one depending on $\xi$, namely the sequence given by the terms on the left-hand side of~\eqref{e:eksequence} below. In particular, the failure of Fourier decay at a certain frequency means that this sequence spends a disproportionate amount of time being close to $0$ or $1$. This observation is what is exploited in the classical Erd\H{o}s--Kahane argument to prove Fourier decay outside a sparse set of frequencies. 
        However, this argument will not work for all $\omega = ([\a_i])_{i=1}^{\infty} \in \Omega$, because it is possible that a disproportionate number of the $[\a_i]$ will be a single element set. To overcome this issue we use the large deviation results from Section~\ref{s:largedev} to show that this problem can only occur for a small set of $P$-measure. Therefore we can successfully apply the Erd\H{o}s--Kahane argument for a set of large $P$-measure, giving the desired result. 
		\begin{prop}\label{p:decayoutsidesparse}
			For all $k\in\mathbb{N}$ sufficiently large there exist $\epsilon, C_k> 0$ such that for all $T'>0$ there exists $\Omega_2 \subset \Omega$ such that $P(\Omega \setminus \Omega_2) \preceq_{k} (T')^{-\epsilon}$, and such that for all $T \geq T'$ and $\omega \in \Omega_2$, the set $\{\xi\in [-T,T] : |\widehat{\mu_{\omega}}(\xi)|\geq T^{-\epsilon} \}$ can be covered by at most $C_{k} T^{o_{k}(1)}$ intervals of length~$1$.
		\end{prop}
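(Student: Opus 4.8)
The plan is to take $\Omega_2:=\Omega^*=\Omega_{k,\alpha,N'}$ to be the set produced by Proposition~\ref{p:largedev}, with $N'=N'(T')\approx_k\log T'$ chosen small enough that $e^{-\beta_k N'}\preceq_k (T')^{-\epsilon}$ (which forces $\epsilon$ to be small in terms of $k$; this is one of several such constraints on $\epsilon$ that accumulate below) and large enough, relative to $T'$, for the bookkeeping that follows. Then $P(\Omega\setminus\Omega_2)\preceq_k e^{-\beta_k N'}\preceq_k (T')^{-\epsilon}$, so it remains to treat a fixed $\omega=([\a_i])_{i\geq1}\in\Omega_2$, a fixed $T\geq T'$, and a fixed frequency $\xi\in[-T,T]$ with $|\widehat{\mu_\omega}(\xi)|\geq T^{-\epsilon}$; we may assume $\xi>0$. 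Concatenating the blocks $[\a_1],[\a_2],\dots$ into a single string of digits, I would let $S_n$ be the product of the first $n$ contraction ratios (so $S_{(m-1)k}=\prod_{i<m}r_{[\a_i]}$) and set $v_n:=\xi\,(t_{l_2,j^*}-t_{l_1,j^*})\,S_n$, recalling $|t_{l_2,j^*}-t_{l_1,j^*}|\geq c$ by~\eqref{e:csep} and that $(|v_n|)_n$ is non-increasing since $v_{n+1}=r_{d_{n+1}}v_n$ for the next digit $d_{n+1}$. Taking $N$ to be the largest $m$ with $|v_{(m-1)k}|\geq1$, the lower bound $\prod_{i<m}|r_{[\a_i]}|>e^{-2\Lambda k m}$ from $\tilde{\Omega}_2$, together with the trivial bound $|r_{[\a_i]}|\leq(\sup_{j,l}|r_{l,j}|)^k<1$, gives $N\approx_k\log T$ (with $Nk\approx\log T$ up to absolute constants); in particular $N\geq N'$, so all conclusions defining $\Omega^*$ hold at scale $N$.

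The next step is to exploit the product structure~\eqref{e:ft} of $\widehat{\mu_\omega}$, whose $m$-th factor I denote $P_m(\xi)$. Using that the two special fibre maps $\gamma_{l_1,j^*},\gamma_{l_2,j^*}$ share a contraction ratio $r_*$, a short computation grouping $P_m(\xi)$ according to the value of the \emph{first} special digit of block $m$ gives $|P_m(\xi)|\leq\big|\tfrac{1+e(v_{q-1})}{2}\big|=|\cos\pi v_{q-1}|$, where $q=q_1(m)$ is the global position of that digit; hence (cf.\ Lemma~\ref{l:fourierdecay}) there is $\upsilon=\upsilon(\delta'')<1$ with $|P_m(\xi)|\leq\upsilon$ whenever $\|v_{q_1(m)-1}\|\geq\delta''$, where $\|\cdot\|$ is distance to $\mathbb{Z}$. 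Here $\delta''=\delta''(k)$ will be taken extremely small --- roughly $\exp(-2ke^{3\alpha k/4})$. Call a block $m\leq N$ \emph{resonant} if $|P_m(\xi)|>\upsilon$; since $|\widehat{\mu_\omega}(\xi)|=\prod_m|P_m(\xi)|\geq T^{-\epsilon}$ and every $|P_m|\leq1$, the number of non-resonant blocks in $\{1,\dots,N\}$ is at most $\epsilon\log T/\log(1/\upsilon)$, which (shrinking $\epsilon$ in terms of $k$, using $N\approx_k\log T$) I may assume is $\leq e^{-\alpha k}N$. Combining this with $\tilde{\Omega}_1$ (at least $(1-e^{-\alpha k})N$ blocks have $\#[\a_m]>2^{p_*k}$, hence a special digit) and $\tilde{\Omega}_3$ (at least $(1-e^{-\alpha k})N$ blocks $m$ have $|r_{[\a_m]}|\geq\exp(-e^{3\alpha k/4})$, and then \emph{every} digit of such a block has ratio $\geq\exp(-e^{3\alpha k/4})$), all but $\O_{k}(e^{-\alpha k}N)$ of the blocks are simultaneously resonant, possess a first special digit, and have all digit-ratios $\geq\exp(-e^{3\alpha k/4})$; call these the \emph{good} blocks, so that $\|v_{q_1(m)-1}\|<\delta''$ for each good $m$.

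The heart of the argument is then an Erd\H{o}s--Kahane counting estimate. Let $a_n$ be the nearest integer to $v_n$. Since $\xi=v_0/(t_{l_2,j^*}-t_{l_1,j^*})$ lies in one of $\O(1)$ unit intervals determined by $a_0$, it suffices to bound the number of possible values of $a_0$, and I would reconstruct $a_0$ backwards from $a_{Nk}$, which has $\O(1)$ values since $|v_{Nk}|<1$. For consecutive good blocks $m,m+1$ one has $v_{q_1(m)-1}=\rho_m^{-1}v_{q_1(m+1)-1}$ with $\rho_m$ a product of at most $2k$ digit-ratios, each (lying in block $m$ or $m+1$, hence) $\geq\exp(-e^{3\alpha k/4})$, so $|\rho_m^{-1}|\leq\exp(2ke^{3\alpha k/4})$; since both $v_{q_1(m)-1}$ and $v_{q_1(m+1)-1}$ are within $\delta''$ of $\mathbb{Z}$ and $\delta''|\rho_m^{-1}|<\tfrac12$, the integer $a_{q_1(m)-1}$ is \emph{uniquely} determined by $a_{q_1(m+1)-1}$. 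On the remaining backward steps --- the digits inside, or adjacent to, one of the $\O_{k}(e^{-\alpha k}N)$ non-good blocks, together with the two end stretches of the string --- I use only the trivial bound of $|r_d^{-1}|+2\leq 3|r_d^{-1}|$ choices per step. Multiplying these bounds, the logarithm of the number of possible values of $a_0$ is at most
\[ \O_{k}(1)+\O\!\big(e^{-\alpha k}Nk\big)+\!\!\sum_{d\ \mathrm{in\ a\ bad\ step}}\!\!\log|r_d^{-1}|, \]
and since $Nk\approx\log T$ the middle term is $o_k(1)\log T$, while the last sum is also $o_k(1)\log T$: digits in steps adjacent to a non-resonant but large-ratio block contribute at most $\O_{k}(e^{-\alpha k}N)\cdot\O(ke^{3\alpha k/4})$, and digits in blocks with $|r_{[\a_i]}|<\exp(-e^{3\alpha k/4})$ contribute at most $\sum_{i\leq N,\ |r_{[\a_i]}|<\exp(-e^{3\alpha k/4})}\log|r_{[\a_i]}^{-1}|\leq 2N\,o_k(1)$ by $\tilde{\Omega}_4$ and Lemma~\ref{l:little o lemma}. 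Thus $a_0$ takes at most $C_kT^{o_k(1)}$ values, and $\{\xi\in[-T,T]:|\widehat{\mu_\omega}(\xi)|\geq T^{-\epsilon}\}$ is covered by $C_kT^{o_k(1)}$ unit intervals.

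The hard part will be the combinatorial bookkeeping in the previous paragraph: one must pass from block-level resonance to digit-level control while the contraction ratios are wildly inhomogeneous, and this is exactly what forces the simultaneous use of all four large-deviation events --- $\tilde{\Omega}_1$ (enough special digits, so that a block which fails to decay is pinned down by a single resonance), $\tilde{\Omega}_2$ (a lower bound on products of ratios, fixing the size of $N$ and bounding $\log(1/S_{Nk})\approx\log T$), $\tilde{\Omega}_3$ (most blocks have all ratios bounded below, yielding the crucial \emph{unique}-choice step), and $\tilde{\Omega}_4$ together with Lemma~\ref{l:little o lemma} (the rare very contracting blocks carry a negligible proportion of $\log T$) --- and it is also why $\delta''$, hence $\upsilon$, and ultimately $\epsilon$ and $C_k$, are permitted to depend on $k$. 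The details are carried out below.
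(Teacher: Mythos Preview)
Your Erd\H{o}s--Kahane strategy is essentially the paper's: take $\Omega_2=\Omega^*$, use the product structure of $\widehat{\mu_\omega}$ to isolate resonant blocks, and reconstruct the nearest integer to $v_0$ backwards with unique determination on most steps. Your digit-level parameterisation $v_n=\xi(t_{l_2,j^*}-t_{l_1,j^*})S_n$ and the clean factor bound $|P_m(\xi)|\leq|\cos\pi v_{q_1(m)-1}|$ are in fact slightly neater than the paper's block-level quantities $\xi(t_{1,i_l}-t_{2,i_l})\prod_{j<i_l}r_{[\a_j]}$ (which carry an extra ratio $(t_{1,i_l}-t_{2,i_l})/(t_{1,i_{l'}}-t_{2,i_{l'}})$ between levels), though the two are equivalent via $t_{1,i_l}-t_{2,i_l}=(t_{l_1,j^*}-t_{l_2,j^*})\prod_{q<j}r_{a_q}$.

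There are, however, two gaps. First, your $N$ is the largest $m$ with $|v_{(m-1)k}|\geq1$, which depends on $\xi$; the claim $N\approx_k\log T$ (and in particular $N\geq N'$, which you need in order to invoke the $\tilde{\Omega}_i$ properties at scale $N$) fails when $|\xi|\ll T$, and those small frequencies cannot simply be absorbed into $C_k$ since they fill an interval of length comparable to $T'$. The paper avoids this by defining its cutoff $N_\omega$ via $T$ rather than $\xi$ (see~\eqref{e:definenomega}); you should do likewise, after which $|v_{Nk}|\leq T|S_{Nk}|<1$ still holds and the rest of your argument is unaffected. Second, and more substantively, your count of the possible values of $a_0$ is performed for a \emph{fixed} pattern of good blocks, but whether a block is resonant depends on $\xi$, so different bad frequencies produce different good sets and hence different reconstruction paths. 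To bound the total number of $a_0$'s over all bad $\xi$ you must first sum over the possible non-resonant configurations; since there are at most $e^{-\alpha k}N$ non-resonant blocks among $N$, Stirling's formula bounds the number of configurations by an additional factor $T^{o_k(1)}$. The paper makes this step explicit (see~\eqref{e:stirling}), and it needs to appear in your argument as well. With these two corrections your proof goes through.
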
	
		\begin{proof}
			The proof of this proposition is long so we split it into more manageable parts. \bigskip
			
			\noindent \textbf{Part 1. Defining $\Omega_{2}$ and introducing our strategy of proof.}
			
			Let $\alpha$ be as in Proposition~\ref{p:largedev}. Let $k\in \N$ be sufficiently large so that Proposition~\ref{p:largedev} applies and let $\beta_{k}>0$ be the corresponding parameter. 
			It clearly suffices to prove the proposition for $T' > e^{2\Lambda k}$, so fix an arbitrary such $T'$, and let $N' \in \N$ be such that $e^{2\Lambda kN'} < T' \leq e^{2\Lambda k(N'+1)}$. 
			Let $\Omega_{2}\coloneqq \Omega^{*}$ for these values of $k$, $\alpha$ and $N'$. 
			If we let $\epsilon' \coloneqq \beta_k/(2k\Lambda)$, then it follows from Proposition~\ref{p:largedev} and the definition of $N'$ that 
			\[ P(\Omega \setminus \Omega_2)  \preceq_{k} e^{-\beta_k N'} \preceq_{k} (T')^{-\epsilon'}.\]  
			
			We fix $\omega \in \Omega_2$ and $T \geq T'$. We now set out to show that for some $\epsilon>0$ depending only upon $k$, the set of frequencies $\xi\in [-T,T]$ for which $|\widehat{\mu_{\omega}}(\xi)|\geq T^{-\epsilon}$ can be covered by $C_{k} T^{o_{k}(1)}$ intervals of length~$1$. If we can do this, then without loss of generality we can assume $\epsilon \leq \epsilon'$, and the proof will be complete. 
			
			Let $N \in \N$ be such that $e^{2\Lambda k(N-1)} < T \leq e^{2\Lambda kN}$, noting that $N \geq N'$. Without loss of generality we may assume $T = e^{2\Lambda kN}$. 
			Let $N_{\omega}\in\mathbb{N}$ be the minimal positive integer satisfying 
			\begin{equation}%
				\left|T \cdot \prod_{i=1}^{N_{\omega}+1}r_{[\a_i]}\right|<1.
			\end{equation}%
			It follows from the fact that our CIFS in uniformly contracting that there exists $c_{1}>1$ depending only on the underlying CIFS such that $N_{\omega}\leq c_{1}N$. Since $\omega\in \Omega_{2}$ and therefore $\omega\in \tilde{\Omega}_{2}$, it also follows that $N\leq N_{\omega}$. Combining these statements gives
			\begin{equation*}
				\label{e:N comparisons}
				N\approx N_{\omega}.
			\end{equation*} Using this expression together with the inequalities $e^{2\Lambda k(N-1)} < T \leq e^{2\Lambda kN}$ yields
			\begin{equation}
				\label{e:T and N comparison}
				\frac{\log T}{k}\approx N_{\omega}.
			\end{equation} 
			It follows from the definition of $N_{\omega}$ that 
			\[ \left|T \cdot \prod_{i=1}^{N_{\omega}}r_{[\a_i]}\right|\in [1,|r_{[\a_{N_{\omega}+1}]}|^{-1}].\] 
			Now using the fact that $\omega\in \Omega_{2}$, and therefore $\omega\in \tilde{\Omega}_{4}$, the above inclusion implies that 
			\begin{equation}
				\label{e:asymptotic inclusion}
				\left|T \cdot \prod_{i=1}^{N_{\omega}}r_{[\a_i]}\right|\in \Bigg[1,\max\Big\{\exp( e^{3\alpha k/4}),\exp\Big(-2N_{\omega}\sum_{\stackrel{[\a]\in I}{|r_{[\a]}|<\exp( -e^{3\alpha k/4})}}q_{\a}\log|r_{[\a]}|\Big)\Big\}\Bigg].
			\end{equation}
			
			Let $\xi \in [-T,T]\setminus \{0\}$. By~\eqref{e:ft}, 
			\begin{equation}\label{e:decaybound} 
				|\widehat{\mu_{\omega}}(\xi)|\leq \prod_{i=1}^{N_{\omega}} \frac{1}{\# [\a_i]} \left| \sum_{\b\in [\a_i]}e\Big(\xi \cdot t_{\b,[\a_i]}\cdot \prod_{j=1}^{i-1}r_{[\a_j]}\Big)\right|.
			\end{equation}
			Let 
			\[G_{\omega}\coloneqq\{1\leq i\leq N_{\omega} :  \#[\a_i]\geq 2^{p_{*} k}\textrm{ and }|r_{[\a_i]}|\geq \exp(-e^{3\alpha k/4}) \}.\] 
			Since $\omega \in \Omega_{2}$ (and therefore $\omega\in\tilde{\Omega}_1\cap \tilde{\Omega}_{3}$), and $N_{\omega}\geq N'$, we know that 
			\begin{equation}\label{e:manydecaylevels}
				\# G_{\omega}\geq N_{\omega}(1-2e^{-\alpha k}).
			\end{equation}
			We call each $i\in G_{\omega}$ a decay level. 
			We enumerate the decay levels by \[i_{1} < \cdots <i_{\# G_{\omega}}.\] 
			For each decay level $i_{l}$ we can choose two distinct words  $\a,\b \in [\a_{i_{l}}]$ and $1\leq j\leq k$ such that $a_n=b_n$ for $n\neq j$ and $a_j=(j_{*},l_{1})$ and $b_{j}=(j_{*},l_{2})$. We let $t_{1,i_{l}}$ and $t_{2,i_{l}}$ be the translation parameters for the maps $\gamma_{\a}$ and $\gamma_{\b}$, i.e. they satisfy 
			\[ \gamma_{\a}(x) = r_{[\a_{i_{l}}]}x + t_{1,i_{l}}; \qquad \gamma_{\b}(x) = r_{[\a_{i_{l}}]}x + t_{2,i_{l}}.\]
			Without loss of generality we may assume that $t_{1,i_{l}}> t_{2,i_{l}}$. 
			Recall from~\eqref{e:csep} that $c>0$ is the constant such that $|t_{l_1,j^{*}}-t_{l_{2},j^*}|\geq c$. 
			Because of our assumptions on $\a$ and $\b$, we know that
			\begin{align}
				\label{e:t distances}
				1\geq t_{1,i_{l}}- t_{2,i_{l}}=\sum_{n=1}^{k}t_{a_n}\prod_{q=1}^{n-1}r_{a_q}-\sum_{n=1}^{k}t_{b_n}\prod_{q=1}^{n-1}r_{b_q}&=\left(t_{l_1,j^{*}}-t_{l_{2},j^*}\right)\prod_{q=1}^{j-1}r_{a_{q}}\\
				&\geq c\exp(-e^{3\alpha k/4})\nonumber.
			\end{align} 
			In the final inequality we used~\eqref{e:csep} and the fact that $|r_{[\a_i]}|\geq \exp(-e^{3\alpha k/4})$ when $i$ is a decay level.
			
			Our strategy for proving what remains of this proposition is to examine for each decay level $i_{l}$ and $\xi\in [-T,T]$ the quantities \[\xi \cdot t_{1,i_{l}}\cdot \prod_{j=1}^{i_{l}-1}r_{[\a_j]}\qquad\text{ and }\qquad \xi \cdot t_{2,i_{l}}\cdot \prod_{j=1}^{i_{l}-1}r_{[\a_j]}.\] In particular, we will be interested in the distance between $\mathbb{Z}$ and the difference between these terms. If a decay level is such that the distance is large, then we can apply Lemma~\ref{l:fourierdecay} in a meaningful way to bound the expression on the right hand side of~\eqref{e:decaybound}. \bigskip
			
			\noindent \textbf{Part 2. Introducing $\Bad(\xi)$ and bounding $|\widehat{\mu_{\omega}}(\xi)|$ when $\Bad(\xi)$ is small. }
			
			With the above strategy in mind, for each $1\leq l\leq \# G_{\omega}$ and $\xi\in [-T,T]$, we let $p_{l}(\xi)\in\mathbb{Z}$ and $\epsilon_{l}(\xi)\in [-1/2,1/2)$ be such that 
			\begin{equation}\label{e:eksequence}
            \xi(t_{1,i_{l}}-t_{2,i_{l}})\prod_{j=1}^{i_{l}-1}r_{[\a_j]} = p_{l}(\xi)+\epsilon_{l}(\xi).
            \end{equation}
			We observe that 
			\begin{equation}
				\label{e:pepsrelation}
				p_{l}(\xi)+\epsilon_{l}(\xi) = \xi(t_{1,i_{l}}-t_{2,i_{l}})\prod_{j=1}^{i_{\# G_{\omega}}}r_{[\a_j]} \cdot \left(\prod_{j'=i_{l}}^{i_{\# G_{\omega}}} r_{[\a_{j'}]}\right)^{-1}.
			\end{equation}
			Equation~\eqref{e:pepsrelation} implies that for all $1\leq l<l'\leq \# G_{\omega}$ we have the following relation between $p_{l}(\xi)+\epsilon_{l}(\xi)$ and $p_{l'}(\xi)+\epsilon_{l'}(\xi)$:
			\begin{equation}\label{e:successivepeps}
				p_{l}(\xi)+\epsilon_{l}(\xi)=\frac{(t_{1,i_{l}}-t_{2,i_{l}})}{(t_{1,i_{l'}}-t_{2,i_{l'}})}\cdot \left(\prod_{j=i_{l}}^{i_{l'}-1}r_{[\a_j]}\right)^{-1}\left(p_{l'}(\xi)+\epsilon_{l'}(\xi)\right).
			\end{equation}
			Moreover, by~\eqref{e:t distances} we know that 
			\begin{equation}\label{e:pointofeta}
				c\exp(- e^{3\alpha k/4})\leq \frac{(t_{1,i_{l}}-t_{2,i_{l}})}{(t_{1,i_{l'}}-t_{2,i_{l'}})}\leq \frac{1}{c}\exp( e^{3\alpha k/4})
			\end{equation}for all $1\leq l<l' \leq \# G_{\omega}$. 
			
		If $i_{l}+1=i_{l+1}$ then $\prod_{j=i_{l}}^{i_{l+1}-1}r_{[\a_j]} = r_{[\a_{i_l}]}$ so it follows from~\eqref{e:pointofeta} and the fact $i_{l}$ is a decay level that we have the upper bound
		\begin{equation}
			\label{e:expansion}	
			\left|\frac{(t_{1,i_{l}}-t_{2,i_{l}})}{(t_{1,i_{l+1}}-t_{2,i_{l+1}})}\cdot \left(\prod_{j=i_{l}}^{i_{l+1}-1}r_{[\a_j]}\right)^{-1}\right|\leq \frac{1}{c} \exp(2e^{3\alpha k/4}).
		\end{equation}
		Let \[\epsilon^* \coloneqq \frac{c\exp(-2 e^{3\alpha k/4})}{5 }.\] 
		The number $\epsilon^*$ has the important property that if $i_{l+1}=i_{l}+1$ and $\max\{|\epsilon_{l}(\xi)| , |\epsilon_{l+1}(\xi)|\} \leq \epsilon^*$, then $p_{l}(\xi)$ is uniquely determined by $p_{l+1}(\xi)$. This follows because if one assumes $i_{l}+1=i_{l+1}$ and $|\epsilon_{l+1}(\xi)|\leq \epsilon^{*}$, then by~\eqref{e:successivepeps} and~\eqref{e:expansion}, $p_{l}(\xi) + \epsilon_{l}(\xi)$ belongs to an interval of length at most $2/5$.
		
		For each $\xi\in [-T,T]$ we consider the set \[ \Bad(\xi)\coloneqq\left\{i\in G_{\omega}:\xi(t_{1,i}-t_{2,i})\prod_{j=1}^{i-1}r_{[\a_j]}\in \mathbb{Z}+[-\epsilon^{*},\epsilon^{*}]\right\}.\] If $\xi$ is such that
		\begin{equation}\label{e:assumebadissmall}
		\#\Bad(\xi)\leq \# G_{\omega}\left(1-\frac{1}{k!}\right),
		\end{equation} then by Lemma~\ref{l:fourierdecay}\footnote{In our application of Lemma~\ref{l:fourierdecay} we are implicitly using the fact that there are at most $k$ choices of $\p$. This is the case because $\p$ is the uniform probability vector on $2^{l}$ elements for some $1\leq l\leq k$.} and~\eqref{e:decaybound},  there exists $\upsilon\in(0,1)$ depending only upon our IFS and $k$ such that 
		\[|\widehat{\mu_{\omega}}(\xi)|\leq \upsilon^{\# G_{\omega}/k!}\leq \upsilon^{N_{\omega}(1-2e^{-\alpha k})/k!}.\]
		Now using~\eqref{e:T and N comparison}, we see that~\eqref{e:assumebadissmall} implies that there exists a small constant $\epsilon>0$ (depending only on our IFS and $k$) such that $|\widehat{\mu_{\omega}}(\xi)|\leq T^{-\epsilon}$. \bigskip
		
		\noindent \textbf{Part 3. Making a choice of large $\Bad(\xi)$.}
		
		For the remainder of the proof, we suppose that $\xi\in [-T,T]$ is such that 
		\[\#\Bad(\xi)\geq \#G_{\omega}\left(1-\frac{1}{k!}\right).\] 
		Under this assumption, we proceed to determine how many different possible choices of $p_{1}(\xi)$ there are (this will be the main focus of parts 3, 4, 5 and 6 of the proof). 
		Now, $\Bad(\xi)$ is a subset of $G_{\omega}$ with cardinality at least $\#G_{\omega}(1-\frac{1}{k!})$. As such, by Stirling's formula, we can bound the number of choices for $\Bad(\xi)$ from above by 
		\begin{align}\label{e:stirling}
			\begin{split}
				\sum_{i= \lceil \# G_{\omega}(1-1/k!) \rceil}^{\# G_{\omega}} {\binom{\# G_{\omega}}{i}} &\leq \frac{\#G_{\omega}}{k!}	 {\binom{\# G_{\omega}}{\lceil \# G_{\omega}(1-1/k!) \rceil}}\\ &\preceq \frac{\#G_{\omega}}{k!}\cdot e^{2\# G_{\omega}\cdot (-\frac{1}{k!}\log \frac{1}{k!}-(1-\frac{1}{k!})\log (1-\frac{1}{k!}))} \\*
				&\leq \frac{N_{\omega}}{k!}\cdot e^{2N_{\omega}\cdot (-\frac{1}{k!}\log \frac{1}{k!}-(1-\frac{1}{k!})\log (1-\frac{1}{k!}))}\\*
				&=T^{o_{k}(1)}.
			\end{split}
		\end{align}
		In the final line we used~\eqref{e:T and N comparison} and the inequality $x\leq e^x$ for $x\geq 0$.

		Let $\I\subset G_{\omega}$ be a specific choice for $\Bad(\xi)$. Let us enumerate the elements of $\I$ by $i_{l(1)}<i_{l(2)}<\cdots <i_{l(\# \I)}$. We also let 
		\begin{equation}\label{e:definejset}
		J=\{i_{l(n)}\in \I:i_{l(n)}+1<i_{l(n+1)}\}.
		\end{equation} 
		Combining our assumptions $\# \I \geq \# G_{\omega}(1-1/k!)$ and $\# G_{\omega}\geq N_{\omega}(1-2e^{-\alpha k})$ gives  
		\begin{equation*}
			\label{e:I cardinality}
			\# \I\geq N_{\omega}(1-1/k!)(1-2e^{-\alpha k})=N_{\omega}(1-\zeta_{k}) 
		\end{equation*} 
		for some $\zeta_k$ satisfying  
		\[\zeta_{k}=\O(e^{-\alpha k}).\]
		
		\noindent \textbf{Part 4. Bounding the number of choices for $p_{l(\# \I)}(\xi)$ given our fixed $\I$.}
		
		We now derive an upper bound for the number of choices for $p_{1}(\xi)$ for those $\xi\in [-T,T]$ satisfying $\Bad(\xi)=\I$. 
		
		Let $\xi\in [-T,T]$ be such that $\Bad(\xi)=\I$. We begin by remarking that for $1\leq l<l'\leq \# G_{\omega}$, by~\eqref{e:successivepeps} and~\eqref{e:pointofeta}, given $p_{l'}(\xi)$ there are at most
		\begin{equation}\label{e:integerchoice}
			\frac{\prod_{j=i_{l}}^{i_{l'}-1}|r_{[\a_{j}]}|^{-1}}{c\exp(- e^{3\alpha k/4})}
		\end{equation}
		choices for $p_{l}(\xi)$. 
		
		From~\eqref{e:pepsrelation}, 
		\begin{equation}\label{e:smallestp}
			p_{l(\# \I)}(\xi)+\epsilon_{l(\# \I)}(\xi) = \xi(t_{1,i_{l(\# \I)}}-t_{2,i_{l(\# \I)}})\prod_{j=1}^{N_{\omega}}r_{[\a_j]}\cdot \left(\prod_{j'=i_{l(\# \I)}}^{N_{\omega}}r_{[\a_{j'}]}\right)^{-1}.
		\end{equation}
		Using~\eqref{e:asymptotic inclusion}, Lemma~\ref{l:little o lemma}, and the fact $|\xi|\leq T$, we know that $\xi \prod_{j=1}^{N_{\omega}}r_{[\a_j]}$ belongs to an interval of length $\preceq_{k}T^{o_{k}(1)}$. 
		Combining this observation with
		\eqref{e:t distances}, we see that~\eqref{e:smallestp} implies that $p_{l(\# \I)}(\xi)$ belongs to an interval of size \[\preceq_{k}T^{o_{k}(1)}\cdot 	\left|\left(\prod_{j=i_{l(\# \I)}}^{N_{\omega}}r_{[\a_j]}\right)^{-1}\right|\]
		Therefore the number of choices for $p_{l(\# \I)}(\xi)$ is bounded above by 
		\begin{equation}\label{e:firstintegerchoice}
			\preceq_{k}T^{o_{k}(1)}\cdot 	\left|\left(\prod_{j=i_{l(\# \I)}}^{N_{\omega}}r_{[\a_j]}\right)^{-1}\right|.
		\end{equation}
		Since $\I\subset \{1,\ldots, N_{\omega}\}$ and  $\# \I\geq N_{\omega}(1-\zeta_{k})$, the product 
		\[\left|\left(\prod_{j=i_{l(\# \I)}}^{N_{\omega}}r_{[\a_j]}\right)^{-1}\right|\] contains at most $N_{\omega}\zeta_{k}$ terms. If a term in this product satisfies $|r_{[\a_j]}|\geq \exp(- e^{3\alpha k/4})$ then we bound $|r_{[\a_j]}|^{-1}$ from above by $\exp( e^{3\alpha k/4})$. 
		We collect the remaining terms which satisfy $|r_{[\a_j]}|< \exp(- e^{3\alpha k/4})$ and bound their contribution to this product from above by  
		\[\prod_{\substack{1 \leq i \leq N_{\omega} \\ r_{[\a_i]} < \exp(- e^{3\alpha k/4})}} |r_{[\a_i]}|^{-1}.\] 		
		Combining these bounds, we see that 
		\[\left|\left(\prod_{j=i_{l(\# \I)}}^{N_{\omega}}r_{[\a_j]}\right)^{-1}\right|\leq \exp( e^{3\alpha k/4}N_{\omega}\zeta_{k})\cdot \prod_{\substack{1 \leq i \leq N_{\omega} \\ r_{[\a_i]} < \exp(- e^{3\alpha k/4})}} |r_{[\a_i]}|^{-1}.\] Now using the fact that $\zeta_{k}=\O(e^{-\alpha k})$, together with Lemma~\ref{l:little o lemma} and the fact $\omega\in \Omega_{2}$ and therefore $\omega\in \tilde{\Omega}_{4}$, we see that 
		\[\left|\left(\prod_{j=i_{l(\# \I)}}^{N_{\omega}}r_{[\a_j]}\right)^{-1}\right|\leq (e^{N_{\omega}})^{o_{k}(1)}=T^{o_{k}(1)},\] 
		where in the final inequality we used~\eqref{e:T and N comparison}. 
		Therefore by~\eqref{e:firstintegerchoice} we have $\preceq_{k} T^{o_{k}(1)}$ choices for $p_{l(\# \I)}(\xi)$.\bigskip 
	
	\noindent \textbf{Part 5. Bounding the number of choices for $p_{1}(\xi)$ given $p_{l(\#\I)}(\xi)$.}
	
	Now suppose $p_{l(\#\I)}(\xi)$ is given. Recall the set $J$ from~\eqref{e:definejset}. 
	We can use the bound provided by~\eqref{e:integerchoice} repeatedly to the elements of $J$, and use the fact that our choice of $\epsilon^*$ means that if ${i}_{l}+1=i_{l+1}$ then $p_{l+1}(\xi)$ uniquely determines $p_{l}(\xi)$, to assert that we have at most 
	\begin{equation}
		\label{e:integercount3}
		\prod_{i_{l(n)}\in J}\prod_{j=i_{l(n)}}^{i_{l(n+1)}-1}\frac{|r_{[\a_j]}|^{-1}}{c\exp (- e^{3\alpha k/4})}
	\end{equation}
	choices for $p_{l(1)}(\xi)$. We now argue as in Part~4 above. Since $\# \I\geq N_{\omega}(1-\zeta_k)$, the above product contains at most $2N_{\omega}\zeta_{k}$ terms. 
	For each term in this product, if $|r_{[\a_j]}|\geq \exp(-e^{3\alpha k/4})$ then we bound $|r_{[\a_j]}|^{-1}$ from above by $\exp( e^{3\alpha k/4})$. We collect all of the remaining terms which satisfy $|r_{[\a_j]}|< \exp(- e^{3\alpha k/4})$ and bound their contribution to this expression from above by 
	\[ \prod_{\substack{1 \leq i \leq N_{\omega} \\ r_{[\a_i]} < \exp(- e^{3\alpha k/4})}} |r_{[\a_i]}|^{-1}.\] 
	Applying both of these bounds, and using the fact this product contains at most $2N_{\omega}\zeta_{k}$ terms, gives that there are at most 
	\begin{equation}\label{e:pl1count}
	c^{-2N_\omega\zeta_k}\cdot \exp(4 e^{3\alpha k/4}N_{\omega}\zeta_{k})\cdot\prod_{\substack{1 \leq i \leq N_{\omega} \\ r_{[\a_i]} < \exp(- e^{3\alpha k/4})}} |r_{[\a_i]}|^{-1}
	\end{equation}
	choices for $p_{l(1)}(\xi)$. 
	Now by an analogous argument to that given above we can show that the quantity in~\eqref{e:pl1count} is $\preceq_{k} T^{o_{k}(1)}$, so given $p_{l(\#I)}(\xi)$ we have $\preceq_{k} T^{o_{k}(1)}$ choices for $p_{l(1)}(\xi)$. 
	
	Finally, given $p_{l(1)}(\xi)$, we can apply~\eqref{e:integerchoice} to bound the number of choices for $p_{1}$ from above by 
	\[\frac{\prod_{j=i_{1}}^{i_{l(1)} - 1}|r_{[\a_j]}|^{-1}}{c\exp (- e^{3\alpha k/4})}.\]
	Using the fact that this product contains at most $N_{\omega}\zeta_{k}$ terms we can proceed by an analogous argument to that given above to show that 
	\[\frac{\prod_{j=i_{1}}^{i_{l(1)} - 1}|r_{[\a_j]}|^{-1}}{c\exp (- e^{3\alpha k/4})}\preceq_{k}T^{o_{k}(1)}.\]
	Therefore given $p_{l(1)}(\xi)$ we have $\preceq_{k} T^{o_{k}(1)}$ choices for $p_{1}(\xi)$.\bigskip
	
	\noindent \textbf{Part 6. Collecting our counting bounds and concluding.}
	
	Combining the counting bounds obtained above, we see that for a specific choice of $\I$ we have at most $C_{k,1}T^{o_{k}(1)}$ choices for $p_{1}(\xi)$, where $C_{k,1}>0$ is a constant depending upon $k$.
	Combining this with our bound~\eqref{e:stirling} coming from Stirling's inequality, we have a total (across all possible choices of $\I$) of at most $C_{k,2}T^{o_{k}(1)}$ 
	choices for $p_{1}(\xi)$, where $C_{k,2}$ is some other constant depending upon $k$.
	
	Recall now the identity 
	\[ \xi(t_{1,i_1}-t_{2,i_1})\prod_{j=1}^{i_{1}-1}r_{[\a_j]}=p_{1}(\xi)+\epsilon_{1}(\xi).\] 
	Rearranging this expression, we see that if $\xi$ is a frequency with $|\widehat{\mu_{\omega}}(\xi)| > T^{-\epsilon}$ for the choice of $\epsilon$ given above, then it belongs to at most $C_{k,2}T^{o_{k}(1)}$ intervals of length 
	\[\left|\left((t_{1,i_1}-t_{2,i_1})\prod_{j=1}^{i_{1}-1}r_{[\a_j]}\right)^{-1}\right|.\] 
	This product contains at most $N_{\omega}\zeta_{k}$ terms. 
	Thus if we replicate the arguments given above (recall Part~4, for example), we can show that 
	\[\left|\left((t_{1,i_1}-t_{2,i_1})\prod_{j=1}^{i_{1}-1}r_{[\a_j]}\right)^{-1}\right|\preceq_{k} T^{o_{k}(1)}.\] 
	Proposition~\ref{p:decayoutsidesparse} now follows once we observe that any interval of length $T^{o_{k}(1)}$ can be covered by at most $T^{o_{k}(1)}$ intervals of length $1$. 
\end{proof}

Before proceeding with the proof of Theorem~\ref{t:main}, we give a consequence of Proposition~\ref{p:decayoutsidesparse} which may be of interest in its own right. 
In~\cite{Kau,Tsujii,MS} it has been proved that the set of frequencies for which the Fourier transform of a self-similar measure in the line does not exhibit polynomial decay is `sparse' in a precise sense. 
In~\cite[Corollary~1.8]{KhalilPreprintfourier}, such estimates have also been proved for a wide class of measures which are not necessarily self-similar. 
In Corollary~\ref{c:tsujii}, we generalise the main result of Tsujii~\cite{Tsujii} to stationary measures for a class of CIFSs consisting of similarities; these measures do not generally satisfy the uniform affine non-concentration condition assumed by Khalil~\cite[Corollary~1.8]{KhalilPreprintfourier}. 
\begin{cor}\label{c:tsujii}
Let $\p$ be a probability vector and $\Phi$ be a non-trivial CIFS acting on $\R$ consisting of similarities. Assume that $\sum_{a\in \A}p_a|r_a|^{-\tau}<\infty$ for some $\tau>0$. Let $\mu$ be the stationary measure for $\Phi$ and $\p$. 
Then for all $\gamma > 0$ there exist $\eta,C>0$ such that for all $T>0$, the set $\{ \xi \in [-T,T] : |\widehat{\mu}(\xi) | \geq T^{-\eta} \}$ can be covered by at most $C T^{\gamma}$ intervals of length~$1$. 
\end{cor}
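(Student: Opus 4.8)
The plan is to combine Proposition~\ref{p:decayoutsidesparse} with the disintegration of Corollary~\ref{c:disintegrateinreals}. That corollary applies here because a non-trivial CIFS has an attractor that is not a singleton, so $\mu$ is not supported on a point; hence we may write $\mu=\int_{\Omega}\mu_{\omega}\,dP(\omega)$ for the product measure $P$ and the family $\{\mu_{\omega}\}$ built from a fixed large $k$ (realising our CIFS on $\R$ as a fibre of a fibre product CIFS as in the proof of Corollary~\ref{c:disintegrateinreals}, so that the hypotheses of Theorem~\ref{t:main} hold). Fix $\gamma>0$. Since the exponent in Proposition~\ref{p:decayoutsidesparse} has the form $o_{k}(1)$, first choose $k$ large enough that this exponent is $<\gamma/2$; this also fixes $\epsilon=\epsilon(k)>0$ and $C_{k}>0$, and in turn fixes $P$ and $\{\mu_{\omega}\}$. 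Put $\eta:=\min\{\epsilon/2,\gamma/3\}$. For a given $T\geq1$, apply Proposition~\ref{p:decayoutsidesparse} with $T':=T^{2\eta/\epsilon}$; since $2\eta/\epsilon\leq1$ we have $T'\leq T$, so we obtain $\Omega_{2}=\Omega_{2}(T')\subseteq\Omega$ with $P(\Omega\setminus\Omega_{2})\preceq_{k}(T')^{-\epsilon}=T^{-2\eta}$ and with the property that for every $\omega\in\Omega_{2}$ the set $\{\xi\in[-T,T]:|\widehat{\mu_{\omega}}(\xi)|\geq T^{-\epsilon}\}$ is covered by at most $C_{k}T^{o_{k}(1)}$ intervals of length $1$.

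Next I would deduce a pointwise bound for $\widehat{\mu}$. By Fubini, $\widehat{\mu}(\xi)=\int_{\Omega}\widehat{\mu_{\omega}}(\xi)\,dP(\omega)$; splitting the integral over $\Omega_{2}$ at the level $T^{-\epsilon}$ and using $|\widehat{\mu_{\omega}}|\leq1$ gives
\[ |\widehat{\mu}(\xi)| \leq T^{-\epsilon} + P\bigl(\{\omega\in\Omega_{2}:|\widehat{\mu_{\omega}}(\xi)|\geq T^{-\epsilon}\}\bigr) + P(\Omega\setminus\Omega_{2}). \]
Since $\eta<\epsilon$ and $P(\Omega\setminus\Omega_{2})\preceq_{k}T^{-2\eta}$, there is a threshold $T_{0}$ (depending only on the CIFS, the probability vector and $\gamma$) with $T^{-\epsilon}+P(\Omega\setminus\Omega_{2})\leq\tfrac12T^{-\eta}$ for all $T\geq T_{0}$. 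Consequently, for $T\geq T_{0}$, every $\xi$ with $|\widehat{\mu}(\xi)|\geq T^{-\eta}$ forces $P(\{\omega\in\Omega_{2}:|\widehat{\mu_{\omega}}(\xi)|\geq T^{-\epsilon}\})\geq\tfrac12T^{-\eta}$.

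The last step is a double-counting argument over $\Omega$. Fix $T\geq T_{0}$ and let $B:=\{\xi\in[-T,T]:|\widehat{\mu}(\xi)|\geq T^{-\eta}\}$. Let $\mathcal N$ be the set of integers $n$ with $[n,n+1]\cap B\neq\varnothing$ and pick $\xi_{n}\in[n,n+1]\cap B$ for each $n\in\mathcal N$; the intervals $[n,n+1]$, $n\in\mathcal N$, cover $B$, so it suffices to bound $\#\mathcal N$. Summing the previous estimate over $n\in\mathcal N$ and applying Fubini,
\[ \tfrac12T^{-\eta}\,\#\mathcal N \leq \sum_{n\in\mathcal N}P\bigl(\{\omega\in\Omega_{2}:|\widehat{\mu_{\omega}}(\xi_{n})|\geq T^{-\epsilon}\}\bigr) = \int_{\Omega_{2}}\#\{n\in\mathcal N:|\widehat{\mu_{\omega}}(\xi_{n})|\geq T^{-\epsilon}\}\,dP(\omega). \]
For fixed $\omega\in\Omega_{2}$, each such $\xi_{n}$ belongs to $\{\xi\in[-T,T]:|\widehat{\mu_{\omega}}(\xi)|\geq T^{-\epsilon}\}$, which by Proposition~\ref{p:decayoutsidesparse} is covered by at most $C_{k}T^{o_{k}(1)}$ unit intervals; as the $\xi_{n}$ ($n\in\mathcal N$) lie in distinct unit intervals $[n,n+1]$, each such covering interval contains $\xi_{n}$ for at most $3$ values of $n$, so the integrand is $\leq3C_{k}T^{o_{k}(1)}$. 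Hence $\#\mathcal N\leq6C_{k}T^{\eta+o_{k}(1)}\leq6C_{k}T^{\gamma}$ because $\eta+o_{k}(1)<\gamma/3+\gamma/2<\gamma$. Finally, for $T<1$ the set $B$ is empty (as $|\widehat{\mu}|\leq1<T^{-\eta}$), and for $1\leq T\leq T_{0}$ one covers $B\subseteq[-T_{0},T_{0}]$ by at most $2T_{0}+1$ unit intervals; taking $C:=\max\{6C_{k},2T_{0}+1\}$ handles all $T>0$.

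The step I expect to be delicate is precisely this passage from the random pieces $\mu_{\omega}$ back to $\mu$: Proposition~\ref{p:decayoutsidesparse} only produces, for a typical $\omega$, a sparse exceptional set depending on $\omega$, and these sets need not line up. The double counting resolves this because a bad frequency for $\mu$ forces bad behaviour of $\mu_{\omega}$ on a set of $\omega$ of $P$-measure at least $\tfrac12T^{-\eta}$ --- only polynomially small in $T$ --- which is balanced by the extra factor $T^{\eta}$ we are permitted in the final count. The remaining points to verify are routine: joint measurability of $(\omega,\xi)\mapsto\widehat{\mu_{\omega}}(\xi)$ (clear from the product formula~\eqref{e:ft}) so that Fubini applies, and that the disintegration and Proposition~\ref{p:decayoutsidesparse} are used for one and the same $k$ and the same pair of well-separated maps underlying the construction of $\Omega$ and $P$.
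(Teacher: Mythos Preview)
Your proof is correct and uses the same ingredients as the paper's: the disintegration of Corollary~\ref{c:disintegrateinreals}, Proposition~\ref{p:decayoutsidesparse}, and an averaging argument over $\omega$ to cope with the fact that the exceptional frequency sets depend on $\omega$. The paper runs this by contradiction (assuming too many well-separated bad frequencies and averaging $|\widehat{\mu_{\omega}}(\xi_{n,m})|$ over $m$ and $\omega$ to reach $T_n^{-2\eta}\leq T_n^{-3\eta}$), whereas your direct double-counting of the pairs $(n,\omega)$ with $|\widehat{\mu_{\omega}}(\xi_n)|\geq T^{-\epsilon}$ is a slightly cleaner packaging of exactly the same idea.
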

\begin{proof}
Assume for the purposes of a contradiction that this result is false. 
The idea will be to use this to find well-separated frequencies where $|\widehat{\mu}|$ is large, but show that Proposition~\ref{p:decayoutsidesparse} implies that the mean average of $|\widehat{\mu}|$ over these frequencies must be small, which will give a contradiction. We use this averaging argument to overcome the fact that in Proposition~\ref{p:decayoutsidesparse}, different $\omega$ may give rise to different bad frequencies. 

Firstly, there exists $\gamma > 0$ such that for all $\eta > 0$ there exists a sequence of positive numbers $T_n \to \infty$ and frequencies 
\begin{equation}\label{e:tsujiifreq} 
\{\xi_{n,m}\}_{n \geq 1, 1 \leq m \leq \lceil T_n^{\gamma} \rceil } \in [-T_n,T_n] 
\end{equation}
such that for all $n \in \N$ we have $|\widehat{\mu}(\xi_{n,m})| \geq T_n^{-\eta}$ for all $m$, and $\xi_{n,m+1} > \xi_{n,m} + 1$ for all $m < \lceil T_n^{\gamma} \rceil - 1$. %
Fix $k \in \N$ large enough that the $o_k(1)$ term in Proposition~\ref{p:decayoutsidesparse} is smaller than $\gamma / 4$. 
Then there exists $\epsilon > 0$ such that (setting $T'=T$) for all $T>0$ sufficiently large there exists $\Omega_T \subset \Omega$ such that $P(\Omega \setminus \Omega_T) \leq T^{-\epsilon}$, and such that for all $\omega \in \Omega_T$, the set $\{\xi \in [-T,T] : |\widehat{\mu_{\omega}}(\xi)|\geq T^{-\epsilon} \}$ can be covered by at most $T^{\gamma/4}$ intervals of length~$1$. 
The previous sentence is also clearly true if $\epsilon$ is replaced by any $\epsilon' \leq \epsilon$, and in particular for $\epsilon' = \min\{\epsilon,\gamma / 20\}$. 
Henceforth fix $\eta \coloneqq \min\{\epsilon/4,\gamma/80\}$, and let $\{\xi_{n,m}\}$ be the set of frequencies from~\eqref{e:tsujiifreq} corresponding to this value of $\eta$. 

By our disintegration from Corollary~\ref{c:disintegrateinreals}, for all $n$ sufficiently large and $1 \leq m \leq \lceil T_n^{\gamma} \rceil$, 
\[ |\widehat{\mu}(\xi_{n,m})| \leq \int_{\Omega} |\widehat{\mu_{\omega}}(\xi_{n,m})| dP(\omega) \leq P(\Omega \setminus \Omega_{T_n}) + \int_{\Omega_{T_n}} |\widehat{\mu_{\omega}}(\xi_{n,m})| dP(\omega) , \]
and rearranging gives 
\begin{equation}\label{e:tsujiiint} 
\int_{\Omega_{T_n}} |\widehat{\mu_{\omega}}(\xi_{n,m})| dP(\omega) \geq |\widehat{\mu}(\xi_{n,m})| -P(\Omega \setminus \Omega_{T_n}) \geq T_n^{-\eta} - T_n^{-4\eta} \geq T_n^{-2\eta}. 
\end{equation}

Observe that for all $n$ sufficiently large and all $\omega \in \Omega_{T_n}$, since the frequencies are separated by gaps larger than $1$, our application of Proposition~\ref{p:decayoutsidesparse} above gives that 
\[ \# \{ 1 \leq m \leq \lceil T_n^{\gamma} \rceil : |\widehat{\mu_{\omega}}(\xi_{n,m})|\geq T_n^{-4\eta} \} \leq T_n^{\gamma / 4}, \]
so 
\begin{equation}\label{e:tsujiiav} 
(\lceil T_n^{\gamma} \rceil)^{-1} \sum_{m = 1}^{\lceil T_n^{\gamma} \rceil} |\widehat{\mu_{\omega}}(\xi_{n,m})| \leq (\lceil T_n^{\gamma} \rceil)^{-1} \cdot T_n^{\gamma / 4} +  T_n^{-4\eta} \leq T_n^{-3\eta}.
\end{equation}
Combining~\eqref{e:tsujiiint} and~\eqref{e:tsujiiav} gives that for all $n$ sufficiently large,  
\begin{align*} 
T_n^{-2\eta} &\leq (\lceil T_n^{\gamma} \rceil)^{-1} \sum_{m = 1}^{\lceil T_n^{\gamma} \rceil} \int_{\Omega_{T_n}} |\widehat{\mu_{\omega}}(\xi_{n,m})| dP(\omega) \\
&= \int_{\Omega_{T_n}} (\lceil T_n^{\gamma} \rceil)^{-1} \sum_{m = 1}^{\lceil T_n^{\gamma} \rceil} |\widehat{\mu_{\omega}}(\xi_{n,m})| dP(\omega) \\
&\leq T_n^{-3\eta}, 
\end{align*}
which is a contradiction. This completes the proof. 
\end{proof}

\subsection{Decay of non-linear images} 

We now show that for a $P$-large set of $\omega$, the magnitude of the Fourier transform of $F \tilde{\mu}_{\omega}$ can be bounded above by some particular power of the frequency multiplied by some constant depending upon the first and second derivative of $F$. 
Given a $C^{2}$ function $F\colon [0,1]^{d+1}\to \mathbb{R}$, we recall the following notation:
\[\|F\|_{\infty,1}\coloneqq \max_{x\in [0,1]^{d+1}}\left|\frac{\partial F}{\partial x_{d+1}}(x)\right|,\quad\|F\|_{\infty,2}\coloneqq \max_{x\in [0,1]^{d+1}}\left|\frac{\partial^{2}F}{\partial x_{d+1}^2}(x)\right|\] and
\[\|F\|_{\min,2}\coloneqq \min_{x\in [0,1]^{d+1}}\left|\frac{\partial^{2}F}{\partial x_{d+1}^2}(x)\right|.\]
Recall also that $s_{\Phi}$ is the Frostman exponent from Proposition~\ref{p:frostman}. 
Our proof of the following proposition uses ideas of Mosquera and Shmerkin~\cite{MS}, with additional complications related to how to choose $\omega$ and the possible presence of infinitely many maps. 
\begin{prop}\label{p:fmuomegadecay}
For all $k\in\mathbb{N}$ sufficiently large there exist $C,\beta,\beta',\epsilon>0$ such that for all $C^{2}$ functions $F\colon [0,1]^{d+1}\to\mathbb{R}$ satisfying $\frac{\partial^{2}F}{\partial x_{d+1}^2}(x) \neq 0$ for all $x\in[0,1]^{d+1}$, for all $\xi\neq 0$ there exists $\Omega_{\xi} \subseteq \Omega$ satisfying \[P(\Omega \setminus \Omega_{\xi}) \leq C (1+\|F\|_{\infty,1}^{-\epsilon})(1+\|F\|_{\min,2}^{-\beta})|\xi|^{-\beta'},\] 
and such that for all $\omega \in \Omega_{\xi}$ we have 
\begin{align*}|\widehat{F \tilde{\mu}}_{\omega}(\xi)| \leq C& (1+\|F\|_{\infty,1}+\|F\|_{\infty,1}^{-\epsilon}+\|F\|_{\infty,2})\\*
	& \times (1+\|F\|_{\min,2}^{-s_{\Phi}})|\xi|^{-\min\{ s_{\Phi}/3 -o_{k}(1), \epsilon/3,0.2 \}}.
	\end{align*}
\end{prop}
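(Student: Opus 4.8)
Our plan is to reduce the assertion to a one-dimensional Fourier-decay estimate for the pushforward of the infinite convolution $\mu_{\omega}$, and then to run a Kaufman-type argument in the spirit of Mosquera--Shmerkin~\cite{MS} whose two inputs are supplied, for a $P$-large collection of $\omega$, by Proposition~\ref{p:frostman} (a uniform Frostman exponent for $\mu_{\omega}$) and Proposition~\ref{p:decayoutsidesparse} (polynomial decay of $\widehat{\mu_{\omega}}$ off a sparse frequency set). Since $\tilde{\mu}_{\omega} = \delta_{\x_{\omega}} \times \mu_{\omega}$ we have $\widehat{F \tilde{\mu}}_{\omega}(\xi) = \int_{[0,1]} e(\xi F_{\omega}(y))\, d\mu_{\omega}(y)$ with $F_{\omega}(y) \coloneqq F(\x_{\omega},y)$; the hypothesis $\partial^{2}F/\partial x_{d+1}^{2} \neq 0$ forces $F_{\omega}''$ to have constant sign on $[0,1]$, so $F_{\omega}'$ is strictly monotone and $|F_{\omega}''| \ge \|F\|_{\min,2}$, $\|F_{\omega}'\|_{\infty} \le \|F\|_{\infty,1}$, $\|F_{\omega}''\|_{\infty} \le \|F\|_{\infty,2}$ throughout. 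In particular $\{y : |F_{\omega}'(y)| < \delta\}$ is an interval of length $\le 2\delta/\|F\|_{\min,2}$, and $\|F\|_{\infty,1} \ge \tfrac12\|F\|_{\min,2}$ because $F_{\omega}'$ sweeps an interval of length $\ge \|F\|_{\min,2}$; the latter lets us dispose of the degenerate regime $\|F\|_{\infty,1} \le |\xi|^{-c}$ at once (there $|\widehat{F \tilde{\mu}}_{\omega}(\xi)| \le 1$ is already below the claimed bound once $|\xi|$ is large, so we may take $\Omega_{\xi} = \Omega$).

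\textbf{The convolution split.} Write $\rho_{M} \coloneqq \prod_{i=1}^{M} r_{[\a_{i}]}$. The structure~\eqref{e:convstructure} gives the exact factorisation $\mu_{\omega} = \mu_{\omega}^{(\le M)} \ast S_{\rho_{M}}\mu_{\sigma^{M}\omega}$, where $\mu_{\omega}^{(\le M)}$ is the convolution of the first $M$ factors in~\eqref{e:convstructure}. I would fix a target scale $|\xi|^{-\theta}$, with $\theta$ slightly larger than $\tfrac12$ (the precise value, which governs the exponents below, to be optimised at the end), and take $M = M(\omega)$ to be the largest integer with $|\rho_{M}| \ge |\xi|^{-\theta}$; the scale-control argument from the proof of Proposition~\ref{p:decayoutsidesparse}, using $\omega \in \tilde{\Omega}_{2} \cap \tilde{\Omega}_{3} \cap \tilde{\Omega}_{4}$ and Lemma~\ref{l:little o lemma}, then yields $|\xi|^{-\theta} \le |\rho_{M}| \le |\xi|^{-\theta + o_{k}(1)}$ and $M \approx \log|\xi|$. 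Splitting a $\mu_{\omega}$-distributed point as $u + v$ with $u \sim \mu_{\omega}^{(\le M)}$, $v \sim S_{\rho_{M}}\mu_{\sigma^{M}\omega}$, $|v| \le |\rho_{M}|$, and Taylor expanding $F_{\omega}(u+v) = F_{\omega}(u) + F_{\omega}'(u)v + \O(\|F\|_{\infty,2}v^{2})$ gives
\[
\Bigl|\, \widehat{F \tilde{\mu}}_{\omega}(\xi) - \int e(\xi F_{\omega}(u))\, \widehat{\mu_{\sigma^{M}\omega}}\bigl(\rho_{M}\xi F_{\omega}'(u)\bigr)\, d\mu_{\omega}^{(\le M)}(u) \,\Bigr| \;\preceq\; \|F\|_{\infty,2}\, |\xi|\, |\rho_{M}|^{2} \;\preceq\; \|F\|_{\infty,2}\, |\xi|^{\,1 - 2\theta + o_{k}(1)}.
\]

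\textbf{Estimating the main term.} Remove from $\Omega$ the set where Proposition~\ref{p:frostman} fails for a suitably chosen scale $r'$ (a power of $|\xi|$), which costs a power of $r'$, and the set of $\omega$ for which $\sigma^{M(\omega)}\omega$ falls outside the good set of Proposition~\ref{p:decayoutsidesparse} for the relevant parameter $T'$; since $M(\omega)$ takes only $\O(\log|\xi|)$ values and $\sigma$ preserves $P$, the latter has measure $\preceq_{k} \log|\xi| \cdot (T')^{-\epsilon}$, and together these two sets constitute $\Omega \setminus \Omega_{\xi}$. For $\omega \in \Omega_{\xi}$ split the $u$-integral. On $\{u : |F_{\omega}'(u)| < \delta_{1}\}$, an interval of length $\le 2\delta_{1}/\|F\|_{\min,2}$, bound the integrand by $1$; as $\mu_{\omega}^{(\le M)}(I) \le \mu_{\omega}(I + [0,|\rho_{M}|])$, Proposition~\ref{p:frostman} controls the mass by $\preceq (\delta_{1}/\|F\|_{\min,2} + |\rho_{M}|)^{s_{\Phi}}$, and taking $\delta_{1}$ to be a small power of $|\xi|$ times $\|F\|_{\min,2}^{2/3}$ produces the factor $\|F\|_{\min,2}^{-s_{\Phi}/3}$. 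On the complement $|F_{\omega}'(u)| \ge \delta_{1}$, the frequency $\rho_{M}\xi F_{\omega}'(u)$ lies in $[-T,T]$ with $T \approx |\xi||\rho_{M}|\|F\|_{\infty,1}$; either it avoids the sparse set of $\mu_{\sigma^{M}\omega}$, so that $|\widehat{\mu_{\sigma^{M}\omega}}(\rho_{M}\xi F_{\omega}'(u))| \le T^{-\epsilon} \preceq |\xi|^{-\epsilon(1-\theta)}\|F\|_{\infty,1}^{-\epsilon}$, or it lands in that sparse set, which is covered by $C_{k}T^{o_{k}(1)}$ unit intervals whose $u$-preimages under $u \mapsto \rho_{M}\xi F_{\omega}'(u)$ (a map with derivative of modulus $\ge |\xi||\rho_{M}|\|F\|_{\min,2}$) are intervals of length $\preceq (|\xi||\rho_{M}|\|F\|_{\min,2})^{-1}$, so their total $\mu_{\omega}^{(\le M)}$-mass is again estimated by Proposition~\ref{p:frostman} through the same convolution bound.

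\textbf{Conclusion and the main difficulty.} Summing the Taylor error, the flat-region term, the sparse-set term $\preceq C_{k}|\xi|^{o_{k}(1)}(1 + \|F\|_{\infty,1}) \cdot (\text{short-interval mass})$, and the main term $\preceq T^{-\epsilon}$, and then optimising $\theta$, $r'$, $T'$, $\delta_{1}$, yields a bound of the advertised shape: the $|\xi|$-exponent is the minimum of the competing rates (of the form $2\theta - 1$, $(1-\theta)s_{\Phi} - o_{k}(1)$ and $\epsilon(1-\theta)$), while the prefactors are assembled from $T \approx |\xi||\rho_{M}|\|F\|_{\infty,1}$, from the Frostman bounds (which supply the powers of $\|F\|_{\min,2}$), and from $C_{k}T^{o_{k}(1)} \preceq |\xi|^{o_{k}(1)}(1 + \|F\|_{\infty,1})$. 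I expect the genuinely delicate point to be exactly this bookkeeping: balancing several error terms each carrying its own power of $|\xi|$ and its own dependence on $\|F\|_{\infty,1}$, $\|F\|_{\infty,2}$, $\|F\|_{\min,2}$, while at the same time re-establishing the scale estimate $|\rho_{M(\omega)}| \approx |\xi|^{-\theta + o_{k}(1)}$ for the \emph{shifted} point $\sigma^{M(\omega)}\omega$ (it is the lack of any uniform lower bound on the contraction ratios that forces one to call on the full strength of the sets $\tilde{\Omega}_{j}$ here), and ensuring that $\Omega \setminus \Omega_{\xi}$ has $P$-measure precisely of the stated form.
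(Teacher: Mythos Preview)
Your proposal is correct and follows essentially the same route as the paper: reduce to $F_\omega$, split $\mu_\omega$ as a convolution at scale $|\rho_M|\approx|\xi|^{-\theta}$ (the paper fixes $\theta=2/3$), Taylor expand, then combine Proposition~\ref{p:decayoutsidesparse} applied to $\mu_{\sigma^M\omega}$ with the Frostman bound of Proposition~\ref{p:frostman} to control the preimage of the sparse bad-frequency set. Two cosmetic differences: the paper does not need your separate ``flat region'' $\{|F_\omega'|<\delta_1\}$, since frequencies near $0$ are automatically absorbed into the sparse set and handled by the same Frostman estimate; and for $P(\Omega\setminus\Omega_\xi)$ the paper avoids your $\log|\xi|$ loss by using that $\{N_\omega=k\}$ depends only on the first coordinates while $\{\sigma^k\omega\notin\Omega_2\}$ has $P$-measure $P(\Omega\setminus\Omega_2)$ by $\sigma$-invariance, so the sum over $k$ telescopes to $P(\Omega\setminus\Omega_2)$.
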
%
\begin{proof}
Without loss of generality we assume $\frac{\partial^2 F}{\partial x_{d+1}^2}(x) > 0$ for all $x\in [0,1]^{d+1}$. 
Let $\alpha$ be as in Proposition~\ref{p:largedev}. 
Fix $k \in \N$ sufficiently large so that Propositions~\ref{p:largedev},~\ref{p:frostman} and~\ref{p:decayoutsidesparse} hold. Fix $\varepsilon$ and $C_k$ as given by Proposition~\ref{p:decayoutsidesparse} for this choice of $k$. 
We will only consider positive $\xi$ (the proof when $\xi < 0$ is analogous), and in fact we may assume $\xi > 1$ since the case $\xi \in [0,1]$ can be dealt with by increasing $C$. 
We let $N'\in \N$ be such that $e^{2\Lambda kN'}<\xi^{2/3}\leq e^{2\Lambda k(N'+1)}$. Let $\Omega^{*}$ be as in Proposition~\ref{p:largedev} for these values of $k$, $\alpha$, $N'$, so by this proposition there exists $\beta_{k}>0$ such that 
\begin{equation}
	\label{e:firstomegabound}
	1-P(\Omega^{*})\preceq_{k} e^{-\beta_{k}N'}\preceq_{k} \xi^{-\beta_{k}/(3\Lambda k)},
\end{equation} 
where in the final inequality we used that $\xi^{2/3}\leq e^{2\Lambda k(N'+1)}$.

Given $\omega \in \Omega$, we let $N_{\omega} \in \N$ be such that 
\[ \prod_{i=1}^{N_{\omega}}| r_{[\a_i]}| \geq \xi^{-2/3} > \prod_{i=1}^{N_{\omega}+1}| r_{[\a_i]}|. \] 
Appealing to the definition of $\Omega^{*}$ and $N'$, we see that there exists $c'>1$ such that if $\omega\in \Omega^{*}$ then
\begin{equation}
	\label{e:Ncomparisons2}
	N'\leq N_{\omega}\leq c'N'.
\end{equation}
 Again appealing to the definition of $\Omega^{*}$, we see that 
\[ |r_{[\a_{N_{\omega}+1}]}|\geq \min\left\{\exp(-e^{3\alpha k/4}),\exp\left(2N_{\omega}\sum_{\stackrel{[\a]\in \I}{|r_{[\a]}|<\exp(-e^{3\alpha k/4})}}q_{[\a]}\log|r_{[\a]}|\right)\right\}.\] 
As such, if we apply Lemma~\ref{l:little o lemma} together with the inequalities $e^{2\Lambda kN'}<\xi^{2/3}\leq e^{2\Lambda k(N'+1)}$ and~\eqref{e:Ncomparisons2}, we can conclude that if $\omega\in \Omega^{*}$ then 
\[\prod_{i=1}^{N_{\omega}}| r_{[\a_i]}|\leq \xi^{-2/3}|r_{[\a_{N_{\omega}+1}]}|^{-1}\preceq_{k} \xi^{-2/3+o_{k}(1)}.\] 
We can therefore assume that $k$ has been chosen sufficiently large that there exists $C_{k}' > 0$ such that 
\begin{equation}
	\label{e:xi contraction}
	\xi^{-2/3}\leq \prod_{i=1}^{N_{\omega}}| r_{[\a_i]}|\leq C_{k}' \xi^{-0.6}
\end{equation}
for all $\omega\in \Omega^{*}$.

Recall the definition of $\mu_{\omega}$ from~\eqref{e:convstructure}, and recall that $S_{\lambda}(x) = \lambda x$. We see that $\mu_{\omega} = \mu_{N_{\omega}} \ast \lambda_{N_{\omega}}$, where 
\[ \mu_{N_{\omega}} \coloneqq \ast_{i=1}^{N_{\omega}}\frac{1}{\# [\a_i]}\sum_{\b\in [\a_i]}\delta_{t_{\b,[\a_i]}\cdot \prod_{j=1}^{i-1}r_{[\a_j]}}; \qquad \lambda_{N_{\omega}} \coloneqq S_{\prod_{i=1}^{N_{\omega}} r_{[\a_i]}} \mu_{\sigma^{N_{\omega}} \omega}.\] 
For each $\omega\in \Omega$ let $F_{\omega} \colon \mathbb{R}\to\R$ be the function given by $F_{\omega}(x)=F(\x_{\omega},x)$ (recall \eqref{e:x_omega def} for the definition of $\x_{\omega}$).
By considering the Taylor expansion of each $F_{\omega}$ and using a similar calculation to the one given in the proof of \cite[Theorem~3.1]{MS}, we obtain the following for all $\omega\in \Omega^{*}$: 
\begin{align}\label{e:longfouriercalc}
	\begin{split}
		&\phantom{--}|\widehat{F \tilde{\mu}}_{\omega}(\xi)|\\
	&= \Big| \int_{[0,1]^{d+1}} e(\xi F(x)) d\tilde{\mu}_{\omega}(x) \Big| \qquad \\
		&=\Big| \int_{[0,1]} e(\xi F_{\omega}(y)) d\mu_{\omega}(y) \Big| \\
		&= \Big| \int_{[0,1]}  \int_{[0,1]}  e(\xi F_{\omega}(y_1+y_2)) d\mu_{N_{\omega}}(y_1) d\lambda_{N_{\omega}}(y_2) \Big| \\
		&= \Big| \int_{[0,1]}  \int_{[0,1]}  e\left(\xi F_{\omega}(y_1) + \xi F_{\omega}'(y_1) y_2\right)(1+\O(\|F\|_{\infty,2}\xi y_{2}^2))d\mu_{N_{\omega}}(y_1) d\lambda_{N_{\omega}}(y_2) \Big| \\
		&\leq \Big| \int_{[0,1]} e(\xi F_{\omega}(y_1)) \left( \int_{[0,1]} e\left(\xi F_{\omega}'(y_1) y_{2}\right) d\lambda_{N_{\omega}}(y_2) \right) d\mu_{N_{\omega}}(y_1) \Big|\\
		&\phantom{--} + \O_{k}\left(\|F\|_{\infty,2}\xi^{-0.2}\right) \\
		&\leq \int_{[0,1]} | \widehat{\lambda_{N_{\omega}}}\left(\xi F_{\omega}'(y_1) \right) | d\mu_{N_{\omega}}(y_1) + \O_{k}\left(\|F\|_{\infty,2}\xi^{-0.2}\right) \\
		&= \int_{[0,1]} \Big| \widehat{\mu_{\sigma^{N_{\omega}} \omega}}\left(\xi F_{\omega}'(y_1)  \prod_{i=1}^{N_{\omega}} r_{[\a_i]}\right) \Big| d\mu_{N_{\omega}}(y_1) + \O_{k}\left(\|F\|_{\infty,2}\xi^{-0.2}\right). 
	\end{split}
\end{align}
We emphasise that the constant implicit in the $\mathcal{O}$ and $\mathcal{O}_k$ notation does not depend on $F$, and note that we used~\eqref{e:xi contraction} for the first inequality. 

Let $T' = \|F\|_{\infty,1} \xi^{1/3} $ and $T \coloneqq \|F\|_{\infty,1} \cdot \xi \cdot \prod_{i=1}^{N_{\omega}}| r_{[\a_i]}|$. Then by~\eqref{e:xi contraction}, we have
\[T' \leq T \leq \|F\|_{\infty,1}C_{k}'\xi^{0.4}\] for all $\omega\in \Omega^{*}$. 
We now apply Proposition~\ref{p:decayoutsidesparse} for our previously fixed choice of $k$ (which defined $\varepsilon$ and $C_k$) and this value of $T'$. We let $\Omega_{2}$  be as in the statement of this proposition. 
Let us now fix $\omega\in \Omega^{*}$ satisfying $\sigma^{N_{\omega}} \omega \in \Omega_{2}$. 
By Proposition~\ref{p:decayoutsidesparse} let $I_1,\dotsc,I_{\lfloor C_{k} T^{o_{k}(1)} \rfloor}$ be intervals of length~$1$ covering the set of frequencies $\xi' \in [-T,T]$ for which $|\widehat{\mu_{\sigma^{N_{\omega}}\omega}}(\xi')| \geq T^{-\epsilon}$. 
Define  
\begin{equation*}
\Gamma \coloneqq \left\{ y_1 : \xi F_{\omega}'(y_1) \prod_{i=1}^{N_{\omega}} r_{[\a_i]} \in \bigcup_{i=1}^{\lfloor C_{k}T^{o_{k}(1)} \rfloor} I_i \right\}. 
\end{equation*}
Note that $\Gamma = \cup_{i=1}^{\lfloor C_{k}T^{o_{k}(1)} \rfloor} (F_{\omega}')^{-1} J_i$ for some intervals $J_i$ of length \[ |J_i| = \left|\left( \xi \prod_{n=1}^{N_{\omega}} r_{[\a_n]} \right)^{-1}\right| \leq \xi^{-1/3}. \] 

This is the point in the proof where we use in a crucial way our assumption that $\frac{\partial^2 F}{\partial x_{d+1}^2}(x) > 0$ for all $x\in [0,1]^{d+1}$. 
Since $\|F\|_{\min,2}>0$, we see that $\Gamma$ can be covered by $\lfloor C_{k}T^{o_{k}(1)} \rfloor$ intervals $J_i'$ of length at most $\|F\|_{\min,2}^{-1}\xi^{-1/3}$. 
Let $r' = \|F\|_{\min,2}^{-1}\xi^{-1/3} + C_{k}'\xi^{-0.6}$. We apply Proposition~\ref{p:frostman} for this choice of $r'$ and let $\Omega_{1}$ be as in the statement of this proposition. Let us now also assume that our $\omega$ satisfies $\omega\in \Omega_{1}$. 
Since $\mu_{\omega} = \mu_{N_{\omega}} * \lambda_{N_{\omega}}$, and $\mbox{supp}(\lambda_{N_{\omega}})$ is contained in an interval $[- C_{k}'\xi^{-0.6}, C_{k}'\xi^{-0.6}]$, we see that for any interval $I = (c_1,c_2)$, 
\[ \mu_{N_{\omega}} (I) \leq \mu_{\omega}(c_1 -  C_{k}'\xi^{-0.6}, c_2 +  C_{k}'\xi^{-0.6}). \]
Recalling that $s_{\Phi}$ is the Frostman exponent from Proposition~\ref{p:frostman}, it follows from this proposition, and the fact that $\omega\in\Omega_{1}$, that 
\[\mu_{N_{\omega}}(J_i') \preceq_{k}\|F\|_{\min,2}^{-s_{\Phi}} \xi^{-s_{\Phi}/3}+\xi^{-0.6 s_{\Phi}} \leq (1+\|F\|_{\min,2}^{-s_{\Phi}})\xi^{-s_{\Phi}/3}\] for all $i$. 
Therefore 
\begin{align*} 
\mu_{N_{\omega}}(\Gamma) &\preceq_{k} T^{o_{k}(1)} (1+\|F\|_{\min,2}^{-s_{\Phi}})\xi^{-s_{\Phi}/3}\\
	& \preceq_{k} (1+\|F\|_{\infty,1}) (1+\|F\|_{\min,2}^{-s_{\Phi}})\xi^{-s_{\Phi}/3+o_{k}(1)}, 
	\end{align*}
where in the final inequality we have used that $T\leq \|F\|_{\infty,1}C_{k}'\xi^{0.4}$. 

Let 
\[ \Omega_{\xi} = \{ \omega \in \Omega^{*}\cap \Omega_{1} : \sigma^{N_{\omega}} \omega \in \Omega_{2} \}. \] 
Summarising the above, from~\eqref{e:longfouriercalc} we have shown that if $\omega\in \Omega_{\xi}$ then 
\begin{align*} 
	| \widehat{F\tilde{\mu}}_{\omega}(\xi)| &\preceq_{k} \int_{\Gamma \cup (\R \setminus \Gamma)} \Big| \widehat{ \mu_{\sigma^{N_{\omega}}\omega}}\left(\xi F_{\omega}'(y_1) \prod_{n=1}^{N_{\omega}} r_{[\a_n]}\right) \Big| d\mu_{N_{\omega}}(y_1) + \|F\|_{\infty,2}\xi^{-0.2} \\
	&\preceq_{k} \mu_{N_{\omega}}(\Gamma) + T^{-\epsilon} + \|F\|_{\infty,2}\xi^{-0.2}\\
	&\preceq_{k} (1+\|F\|_{\infty,1}) (1+\|F\|_{\min,2}^{-s_{\Phi}})\xi^{-s_{\Phi}/3+o_{k}(1)}\\*
	&\phantom{--}  + \|F\|_{\infty,1}^{-\epsilon}\xi^{-\epsilon /3} + \|F\|_{\infty,2}\xi^{-0.2} \\
	&\preceq_{k} (1+\|F\|_{\infty,1}+\|F\|_{\infty,1}^{-\epsilon}+\|F\|_{\infty,2})\\*
	&\phantom{--}\times (1+\|F\|_{\min,2}^{-s_{\Phi}})\xi^{ - \min\{ s_{\Phi}/3 -o_{k}(1), \epsilon /3,0.2 \}},
\end{align*}
where in the penultimate line we used that $\|F\|_{\infty,1}\xi^{1/3}\leq T$. 

All that remains is to bound $P(\Omega \setminus \Omega_{\xi})$. 
With this goal in mind, recall the constants $\beta_k$, $\beta$ and $\epsilon$ (which may each depend on $k$) from Propositions~\ref{p:largedev}, \ref{p:frostman} and~\ref{p:decayoutsidesparse} respectively, and observe the following: 
\begin{align*}
	&P(\Omega \setminus \Omega_{\xi})\\
	&\leq P(\{\omega:\omega\notin \Omega^{*}\})+ P(\{\omega:\omega\notin \Omega_{1}\})+P(\{\omega:\sigma^{N_\omega}\omega\notin \Omega_{2}\})\\	
	&\preceq_{k} \xi^{-\beta_{k}/3\Lambda k}+ (r')^{\beta}+\sum_{k=0}^{\infty}P(\{\omega:N_{\omega}=k,\, \sigma^{k}\omega\notin \Omega_{2}\}) \qquad \text{(\eqref{e:firstomegabound} and Prop.~\ref{p:frostman})}\\
	&\preceq_{k} \xi^{-\beta_{k}/3\Lambda k}+ (r')^{\beta}+\sum_{k=0}^{\infty}P(\{\omega:N_{\omega}=k\})P(\{\omega: \sigma^{k}\omega\notin \Omega_{2}\})\quad \text{(independence)}\\
	&\preceq_{k}\xi^{-\beta_{k}/3\Lambda k}+ (r')^{\beta} + P(\Omega \setminus \Omega_2) \sum_{k=0}^{\infty}P(\{\omega:N_{\omega}=k\})\qquad \text{($\sigma$-invariance of $P$)}\\
	&=\xi^{-\beta_{k}/3\Lambda k}+ (r')^{\beta} + P(\Omega \setminus \Omega_2) \quad \quad  \left(\textrm{since }\sum_{k=0}^{\infty}P(\{\omega:N_{\omega}=k\})=1\right)\\
	&\preceq_{k} \xi^{-\beta_{k}/3\Lambda k}+(r')^{\beta}+(T')^{-\epsilon}\qquad \qquad \qquad \text{(Prop. ~\ref{p:decayoutsidesparse})}\\
	&\preceq_{k} \xi^{-\beta_{k}/3\Lambda k}+(1+\|F\|_{\min,2}^{-\beta})\xi^{-\beta/3}+\|F\|_{\infty,1}^{-\epsilon}\xi^{-\epsilon/3}\\
	&\preceq_{k}(1+\|F\|_{\infty,1}^{-\epsilon})(1+\|F\|_{\min,2}^{-\beta})|\xi|^{-\min\{\beta_{k}/3\Lambda k,\beta/3, \epsilon/3 \}}. 
\end{align*} 
In the penultimate line we used that 
\[ r' = \|F\|_{\min,2}^{-1}\xi^{-1/3} + C_{k}'\xi^{-0.6} \preceq_k (1+\|F\|_{\min,2}^{-1}) \xi^{-1/3} \] and $T'=\|F\|_{\infty,1}\xi^{1/3}$. Taking $\beta' \coloneqq \min\{\beta_{k}/3\Lambda k,\beta/3, \epsilon/3 \}$ completes the proof. 
\end{proof}

In the next proposition we will only assume that $\frac{\partial^{2}F}{\partial x_{d+1}^2}(x) \neq 0$ on the support of $\mu$. 
\begin{prop}\label{p:onlyinsupport}
For all $k\in\mathbb{N}$ sufficiently large, letting $\beta',\epsilon$ be the constants from Proposition~\ref{p:fmuomegadecay}, the following holds. 
For all $C^{2}$ functions $F\colon [0,1]^{d+1}\to\mathbb{R}$ satisfying $\frac{\partial^{2}F}{\partial x_{d+1}^2}(x) \neq 0$ for all $x\in \supp(\mu)$, there exists $C_F > 0$ (depending on $F, \mu, k$) such that for all $\xi\neq 0$ there exists $\Omega_{\xi} \subseteq \Omega$ satisfying $P(\Omega \setminus \Omega_{\xi}) \leq C_F |\xi|^{-\beta'}$, 
and such that for all $\omega \in \Omega_{\xi}$ we have 
\[ |\widehat{F \tilde{\mu}}_{\omega}(\xi)| \leq C_F |\xi|^{ - \min\{ s_{\Phi}/3 -o_{k}(1), \epsilon/3,0.2 \}} . \]
\end{prop}
\begin{proof}
Since $\supp (\mu)$ is compact, there exists $c_F > 0$ depending only on $F$ such that $\Big|\frac{\partial^{2}F}{\partial x_{d+1}^2}(x)\Big| > 2c_F$ for all $x \in \supp (\mu)$. 
Since $F''$ is uniformly continuous on $[0,1]^d$, there exists $r_F \in (0,1)$ depending only on $F$ such that if the distance from a point $x \in [0,1]^d$ to $\supp(\mu)$ is at most $r_F$ then $\Big|\frac{\partial^{2}F}{\partial x_{d+1}^2}(x)\Big| > c_F$. 

As in the proof of Proposition~\ref{p:fmuomegadecay}, fix $k \in \N$ sufficiently large so that Propositions~\ref{p:largedev},~\ref{p:frostman} and~\ref{p:decayoutsidesparse} hold, and we may assume that $\xi > 0$. 
We now follow the proof of Proposition~\ref{p:fmuomegadecay} using the same notation, to get a set $\Gamma = \cup_{i=1}^{\lfloor C_{k}T^{o_{k}(1)} \rfloor} (F_{\omega}')^{-1} J_i$ for some intervals $J_i$ of length $|J_i| \leq \xi^{-1/3}$. 
Fix any one of the intervals $J_i$, and let $A$ be the $r_F$-neighbourhood of $\supp(\mu_{\omega})$, so $\mu_{\omega}((F_{\omega}')^{-1} J_i) = \mu_{\omega}((F_{\omega}')^{-1} J_i \cap A)$. 
But the sign of $F_{\omega}''(x)$ is constant on each connected component of $A$, and $A$ has at most $\lceil r_F^{-1} \rceil$ connected components, so $(F_{\omega}')^{-1} J_i \cap A$ consists of at most $\lceil r_F^{-1} \rceil$ intervals, each of length at most $\xi^{-1/3}/c_F$. 
Assume $\omega \in \Omega_1$, where $\Omega_1$ is the set from Proposition~\ref{p:frostman} for $r' = \xi^{-1/3}/c_F  + C_k' \xi^{-0.6}$, and write $\mu_{\omega} = \mu_{N_{\omega}} * \lambda_{N_{\omega}}$. 
Then since $\mbox{supp}(\lambda_{N_{\omega}})$ is contained in an interval $[- C_{k}'\xi^{-0.6}, C_{k}'\xi^{-0.6}]$, 
\begin{align*}
\mu_{N_\omega}(\Gamma) &\preceq_k T^{o_{k}(1)} \lceil r_F^{-1} \rceil (\xi^{-1/3}/c_F + \xi^{-0.6})^{s_{\Phi}} \\
&\preceq_{k} (1+\|F\|_{\infty,1}) \lceil r_F^{-1} \rceil (1+c_F^{-s_{\Phi}})\xi^{-s_{\Phi}/3+o_{k}(1)}.
\end{align*}
The rest of the proof proceeds as for Proposition~\ref{p:fmuomegadecay}, with $(1+\|F\|_{\min,2}^{-s_{\Phi}})$ replaced by $\lceil r_F^{-1} \rceil (1+c_F^{-s_{\Phi}})$. 
\end{proof}

It is now straightforward to deduce Theorem~\ref{t:main} from Propositions~\ref{p:fmuomegadecay} and~\ref{p:onlyinsupport}. 
\begin{proof}[Proof of Theorem~\ref{t:main}]
We now fix a $k\in\mathbb{N}$ large enough that Proposition~\ref{p:fmuomegadecay} can be applied and $s_{\Phi}/3-o_{k}(1)\geq s_{\Phi}/4$, where the $o_{k}(1)$ term is as in the statement of this proposition. It follows from Proposition~\ref{p:fmuomegadecay} that for all $\xi \neq 0$, 
\begin{align*} 
|\widehat{F\mu}(\xi)|&\leq \int_{\Omega_{\xi}} |\widehat{F\tilde{\mu}}_{\omega}(\xi)| dP(\omega) + P(\Omega \setminus \Omega_{\xi})\\
	&\preceq (1+\|F\|_{\infty,1}+\|F\|_{\infty,1}^{-\epsilon}+\|F\|_{\infty,2}) (1+\|F\|_{\min,2}^{-s_{\Phi}})|\xi|^{ - \min\{ s_{\Phi}/4, \epsilon /3,0.2 \}} \\*
	&\phantom{--}+ (1+\|F\|_{\infty,1}^{-\epsilon})(1+\|F\|_{\min,2}^{-\beta})|\xi|^{-\beta'}\\
	&\preceq (1+\|F\|_{\infty,1}+\|F\|_{\infty,1}^{-\epsilon}+\|F\|_{\infty,2})\\*
	&\phantom{--} \times (1+\|F\|_{\min,2}^{-\max\{s_{\Phi},\beta\}})|\xi|^{ - \min\{ s_{\Phi}/4, \epsilon /3, 0.2, \beta' \}}. 
\end{align*} 
Taking $\kappa=\max\{\epsilon,s_{\Phi},\beta\}$, and $\eta=\min\{ s_{\Phi}/4, \epsilon /3, 0.2, \beta'\} $ completes the proof of the first part of Theorem~\ref{t:main}. 
Similarly, the second part of Theorem~\ref{t:main} follows from Proposition~\ref{p:onlyinsupport}. 
\end{proof}

\begin{remark}
Results for the measures $\mu_{\omega}$ may be of interest in their own right. 
By a straightforward application of the first Borel--Cantelli lemma, we see from Propositions~\ref{p:decayoutsidesparse} and~\ref{p:fmuomegadecay} that when $k$ is sufficiently large, for $P$-almost every $\omega$, for all $|\xi|$ sufficiently large (where `sufficiently large' can depend on $\omega$), $\mu_{\omega}$ will exhibit polynomial Fourier decay outside a sparse set of frequencies, and images of $\mu_{\omega}$ under maps with positive second derivative will exhibit polynomial Fourier decay. %
\end{remark}

\section{Future directions}
\label{Future directions}
In this section we ask several questions. 
The \emph{Fourier dimension} of a Borel measure $\mu$ on $\R^d$ is 
\begin{align*} \dim_{\mathrm F} \mu \coloneqq 2 \sup \{ \epsilon \geq 0 :& \mbox{ there exists } C_{\epsilon} > 0 \mbox{ such that } |\widehat{\mu}(\xi)| \leq C_{\epsilon} |\xi|^{-\epsilon}\\*
	& \mbox{ for all } \xi \neq 0 \}. 
	\end{align*}
The \emph{Fourier dimension} of a Borel set $X \subset \R^d$ is 
\[ \dim_{\mathrm F} X \coloneqq \sup\{ s \in [0,d] : \exists \mu \in \mathcal{M}(X) \mbox{ such that} \dim_{\mathrm F} \mu \geq s \},\]
where $\mathcal{M}(X)$ is the set of finite Borel measures with compact support contained in $X$. 
The conclusion of Theorem~\ref{thm:analyticthm} implies that the self-conformal measure and its support have positive Fourier dimension. It is natural to ask the following. 
	\begin{question}
	Let $\Phi = \{\varphi_a\colon [0,1] \to [0,1]\}_{a\in \A}$ be an IFS such that each $\varphi_{a}$ is analytic, and suppose that there exists $a$ such that $\varphi_{a}$ is not an affine map. 
	Can one obtain improved lower bounds on the Fourier dimensions of the attractor $X$ and self-conformal measures supported on $X$? 
	In particular, does $\dim_{\mathrm F} X = \dim_{\mathrm H} X$ always hold (i.e. is $X$ necessarily a Salem set)? 
	\end{question}
	If $X$ is indeed always a Salem set then this is likely to be difficult to prove. 	
		In a different direction, one can ask the following. 
	\begin{question}
	Consider an IFS of analytic maps on $[0,1]$, at least one of which is not affine. If $\mu$ is merely assumed to be a non-atomic Gibbs measure for a H\"older potential (or a quasi-Bernoulli measure) for the IFS, does $\mu$ necessarily have polynomial Fourier decay? 
	\end{question}
\begin{question}
Consider an IFS $\Phi=\{\varphi_a\colon [0,1] \to [0,1]\}_{a\in \A}$ whose contractions have weaker regularity than being analytic. Are there easily verifiable conditions for $\Phi$ under which the conclusion of Theorem~\ref{thm:analyticthm} still holds? 
\end{question}

Finally, one can ask if an appropriate analogue of Theorem~\ref{thm:analyticthm} holds in higher dimensions. Note that if the measure is contained in a proper subspace of $\R^d$ then there will be no Fourier decay in directions orthogonal to the subspace. 
\begin{question} 
If $\Phi$ is a conformal IFS on a subset of $\R^d$ for some $d \geq 2$, are there natural conditions on $\Phi$ which guarantee that every self-conformal measure has polynomial Fourier decay? 
\end{question} 
After the present paper appeared on arXiv, the first named author, Khalil and Sahlsten~\cite{BKS} used tools from Khalil~\cite{KhalilPreprintfourier} to establish rates of decay for the Fourier transform of a wide class of dynamically defined measures which includes certain self-conformal measures and certain Gibbs measures on $\R^d$. 
Moreover, Algom, Rodriguez Hertz and Wang~\cite{AHW4} proved polynomial Fourier decay for a class of self-conformal measures in the plane satisfying a nonlinearity condition. 
	
	\section*{Acknowledgements} 
	Both authors were financially supported by an EPSRC New Investigators Award (EP/W003880/1).
	We thank Amir Algom, Jonathan Bennett, Thomas Jordan and Tuomas Sahlsten for useful discussions. 
		We thank the authors of the independent work~\cite{AlgomPreprintnonlinear} for coordinating so that both papers were submitted to arXiv simultaneously. 
        We thank the anonymous referee for many helpful comments. 
	
	\bibliographystyle{plain}
	
	\hspace{1cm}
	
	\footnotesize
	{\parindent0pt
		\textsc{Department of Mathematical Sciences, Loughborough University, Loughborough, LE11 3TU, United Kingdom.}\medbreak
		\par\nopagebreak
		\textit{Email:} \texttt{simonbaker412@gmail.com}\medbreak
		\par\nopagebreak
		\textit{Email:} \texttt{banajimath@gmail.com}
	}
	
\end{document}